\documentclass[a4paper,10pt,reqno]{amsart}
\usepackage[utf8]{inputenc}
\usepackage{amssymb}
\usepackage{amsthm}
\usepackage{tikz}
\usepackage{graphicx}
\usepackage{mathrsfs}
\usepackage{epsf}
\usepackage{pdfpages}
\usepackage{bookmark,hyperref}
\usepackage{stmaryrd}
\usepackage[colorinlistoftodos]{todonotes}
\hypersetup{
     colorlinks=true,
     linkcolor=blue,
     filecolor=blue,
     citecolor = blue,
     urlcolor=cyan,
     }
\usepackage{pstricks}
\graphicspath{{figures/}}

\newtheorem{theorem}{Theorem}[section]
\newtheorem{lemma}{Lemma}[section]
\newtheorem{corollary}{Corollary}[section]
\newtheorem{proposition}{Proposition}[section]
\newtheorem*{theorem*}{Theorem}
\newtheorem*{proposition*}{Proposition}

\newtheoremstyle{mythm}%
{3pt}
{3pt}
{}
{}
{\bfseries}
{.}
{.5em}
{}%

\theoremstyle{mythm}
\newtheorem{definition}{Definition}[section]

\newtheorem{remark}{Remark}[section]

\numberwithin{equation}{section}

\newcommand{\interior}[1]{{\kern0pt#1}^{\mathrm{o}}}

\def\N{\mathbb{N}}

\def\R{\mathbb{R}}

\def\set#1{\left\{\, #1 \,\right\}}

\def\norm #1{\left\| \,#1\, \right\|}
\def\snorm #1{\| \,#1\, \|}
\def\inner #1#2{\langle \,#1,#2\, \rangle}

\def\H{\mathbb{H}}
\def\S{\mathbb{S}}

\def\calA{\mathcal{A}}
\def\calB{\mathscr{B}}
\def\calC{\mathcal{C}}
\def\calD{\mathscr{D}}
\def\calE{\mathcal{E}}
\def\calF{\mathcal{F}}

\def\calH{\mathscr{H}}

\def\calK{\mathcal{K}}

\def\calM{\mathscr{M}}

\def\calP{\mathcal{P}}

\def\calS{\mathscr{S}}
\def\calT{\mathcal{T}}
\def\calU{\mathcal{U}}
\def\calV{\mathcal{V}}

\def\sm{\mathfrak{a}}

\def\cone{\mathscr C}

\title[Uniqueness of Busemann functions]
{Uniqueness of hyperbolic Busemann functions\\
in the Newtonian $N$-body problem}
\author{Ezequiel Maderna}
\address{CIMAT -- Centro de Investigación en Matemáticas\\
Guanajuato, México}
\email{ezequiel.maderna@cimat.mx}
\author{Andrea Venturelli}
\address{LMA -- Laboratoire de Mathématiques d'Avignon , France}
\email{andrea.venturelli@univ-avignon.fr}
\thanks{Supported by Laboratorio Del Plata
(IRL-2030 of CNRS) and
ANR Project CoSyDy.}
\keywords{Hamilton-Jacobi equation, viscosity solutions,
Busemann function, Newtonian $N$-body problem}
\subjclass[2010]{70H20 70F10 (Primary),
49L25 37J50 (Secondary)}
\date{\today}

\begin{document}

\begin{abstract}
For the $N$-body problem we prove
that any two hyperbolic rays having the same limit shape
define the same Busemann function.
We localize a region of differentiability for these functions,
of which we know that they are
viscosity solutions of the stationary Hamilton-Jacobi equation.
As a first corollary, we deduce that every hyperbolic motion
of the $N$-body problem must become,
after some time, a calibrating curve for the Busemann function
associated to its limit shape.
This implies that every hyperbolic motion
of the $N$-body problem is eventually a minimizer, that is,
it must contain a geodesic ray of the Jacobi-Maupertuis metric.
Since the viscosity solutions of the Hamilton-Jacobi equation
are almost everywhere differentiable,
we also deduce the generic uniqueness of geodesic rays
with a given limit shape without collisions.
That is to say, if the limit shape is given, then,
for almost every initial configuration the geodesic ray is unique.
\end{abstract}

\maketitle
\tableofcontents

\section{Introduction}

In this paper we deal with the classical $N$-body
problem of Celestial Mechanics, and the global solutions
of the Hamilton-Jacobi of this problem.
In order to give an explicit formulation we must recall
some usual notations. Let $E$ be an Euclidean space where
$N$ punctual and positive masses $m_1,\dots,m_N$ are moving,
because of his mutual gravitational attraction.
The forces derive from the Newtonian potential in $E^N$,
whose value at a configuration
$x=(r_1,\dots,r_N)$ is given by
\[
U(x)=\sum_{i<j}m_im_jr_{ij}^{-1}
\]
where $r_{ij}=\norm{r_i-r_j}_E$ is the distance in $E$
between the bodies $i$ and $j$.
The open and dense set 
\[
\Omega=\set{x=(r_1,\dots,r_N)\in E^N
\mid r_i\neq r_j \textrm{ for all } i\neq j}
\]
is called the set of configurations without collisions.
A very important simplification of the formulation
is obtained by introducing the \emph{mass inner product}
in $E^N$. It is defined as follows:
\[
\inner{(r_1,\dots,r_N)}{(s_1,\dots,s_N)}=
m_1\inner{r_1}{s_1}_E+\dots+m_N\inner{r_N}{s_N}_E
\]
This inner product allow us to write Newton's equations
synthetically in the form $\ddot x=\nabla U(x)$.
Moreover, the Lagrangian function in $T\Omega$
can simply be written as
\begin{equation}
\label{def.eq:Lagrangian}
L(x,v)=\frac{1}{2}\norm{v}^2+U(x)
\end{equation}
and the Hamiltonian in $T^*\Omega$ as
\begin{equation}
\label{def.eq:Hamoiltonian}
H(x,p)=\frac{1}{2}\norm{p}^2-U(x)
\end{equation}
since the natural identification $T_x\Omega\simeq E^N$
induces a dual norm in $T_x^*\Omega$. Here $\norm{\ }$ denotes the euclidean norm associated 
to the mass inner product $\inner{}{}$.
For our purposes, it is convenient to set $L(x,v)=+\infty$ and
$H(x,p)=-\infty$ whenever $x\in\calC=E^N\setminus\Omega$
(the set of configurations with collisions).
We are interested in the variational formulation
because most of the motions that we will consider
will be obtained as minimizers of the Lagrangian action.
Given two configurations $x,y\in E^N$ and $\tau>0$ we
write $\calC(x,y,\tau)$ for the set of absolutely continuous
curves $\gamma:[a,b]\to E^N$ such that $\gamma(a)=x$,
$\gamma(b)=y$ and $b-a=\tau$.
The Lagrangian action of such a curve is denoted, as usual
\[
\calA_L(\gamma)=\int_a^b L(\gamma,\dot\gamma)\,dt=
\int_a^b\frac{1}{2}\norm{\dot\gamma}^2\,dt+
\int_a^b U(\gamma)\,dt
\] 
and the minimal action to go from $x$ to $y$ in time $\tau$
is defined as
\[
\phi(x,y,\tau)=
\min\set{\calA_L(\gamma)\mid \gamma\in\calC(x,y,\tau)}\,.
\]
It should be said that the fact that the Lagrangian
is singular, in no way precludes the possibility
of considering the Lagrangian action of
absolutely continuous curves passing through collisions.
Eventually this action can be infinite.
\begin{definition}[The Jacobi-Maupertuis distance]
\label{def:jm-distance}
For a non-negative energy level $h\geq 0$,
the Jacobi-Maupertuis distance
between two configurations $x$ and $y$ is defined as 
\[
\phi_h(x,y)=
\min\set{\calA_{L+h}(\gamma)\mid \gamma\in\calC(x,y)}=
\min\set{\phi(x,y,\tau)+h\tau \mid \tau>0}
\]
where $\calC(x,y)=\cup_{\,\tau>0}\,\calC(x,y,\tau)$
is the set of all absolutely continuous curves going
from configuration $x$ to configuration $y$ without time constraint, and if $\gamma\in \calC(x,y)$ is defined in the interval $[a,b]$ 
we denote
\begin{equation}\label{eq:Action.L+h}
\calA_{L+h}(\gamma)=\int_a^b (\;L(\gamma(t),\dot \gamma(t))+h\;)dt.    
\end{equation}
\end{definition}
\begin{remark}
\label{rem-jm}
The fact that $\phi_h$ is a distance on $E^N$ is a consequence of the definition and of Lemma 4.4 in \cite{MaVe}.
It is also well known that when the dimension of $E$ is bigger or equal to $2$ the minimum value
in $\phi(x,y,\tau)$, and also in $\phi_h(x,y)$ when $x\neq y$,
is reached on curves without collisions in intermediate times,
and therefore they are true motions of the $N$-body problem.
The existence follows from Tonelli's theory,
and the absence of collisions is guaranteed by the famous
Marchal's Theorem (see \cite{Che1, Mar}).
Moreover, it is immediate to verify that a curve realizing the minimum in $\phi_h$ is a motion of the 
$N$-body problem with energy given by $h$. 
\end{remark}
\subsection{Hyperbolic motions}
\label{ss:Hyp-mot-bsm-fnct}
We deal with \emph{hyperbolic} motions of a
Newtonian $N$-body problem
in the Euclidean space $E$ of dimension $d\geq 2$.
These are those defined over an unbounded interval $t\geq t_0$,
and such that $x(t)=ta+o(t)$ as $t\to+\infty$,
where $x(t)=(r_1(t),\dots,r_N(t))\in E^N$ is the configuration given
by the positions of the bodies at time $t$
and $a=(a_1,\dots,a_N)$ is a configuration without collisions.
Therefore the mutual distances satisfy
$r_{ij}(t)=\norm{r_i(t)-r_j(t)}=t\,a_{ij}+o(t)$,
where $a_{ij}=\norm{a_i-a_j}>0$.
It follows from Chazy's asymptotic analysis of
hyperbolic expansions (see Remark \ref{remark:Chazy} below) 
that the energy constant $h$ of an hyperbolic motion
is given by $h=\frac{1}{2}\norm{a}^2$, in particular it is strictly positive. 

\subsection{Geodesic rays and Busemann functions}
\label{ss: ged-rays-busemann}

The equation of motion of the $N$-body problem
defines a Hamiltonian flow taking place
on the cotangent space $T^*\Omega$, with Hamiltonian given by the function $H$ defined in 
(\ref{def.eq:Hamoiltonian}), which is also called the 
energy function.
It is well known that for every $h\geq 0$, the set $\Omega$ can be
endowed with a Riemannian metric for which motions of energy $h$
are geodesic curves.
This is the Jacobi-Maupertuis metric given by
\[
j_h=2\,(h+U)\,g_m
\]
where $U$ is the Newtonian potential and $g_m$ is the
flat metric on $E^N$ induced by the mass inner product.
Every motion of the Newtonian $N$-body problem with
energy constant $h$ is a geodesic of the Jacobi-Maupertuis
metric $j_h$, but not necessarily parametrized by the arc length.
Actually, if $\gamma: [t_0,t_1]\to \Omega$ is an absolutely continuous curve, 
the Jacobi-Maupertuis arc length of $\gamma$ 
is smaller or equal to the action $\calA_{L+h}(\gamma)$, with equality if and only if $H(\gamma(t),\dot{\gamma}(t)=h$ 
for almost every $t\in [t_0,t_1]$.
Consequently \emph{minimizing geodesics}
joining two given configurations are
in correspondence with the free time minimizers of $\calA_{L+h}$,
in the space of curves joining these two configurations.
In our previous works \cite{MV,MaVe} we have shown the abundance
of geodesic rays for these Riemannian metrics, in the sense
that we can find them with arbitrary initial configuration.
Moreover, for the positive energy case $h>0$, these geodesic rays
can be taken as hyperbolic motions $x(t)=ta+o(t)$
with arbitrary initial configuration $x(0)=x_0\in E^N$ and
even with arbitrary limit shape $a\in\Omega$.
These existence results have been extended
by Polimeni and Terracini \cite{PolTer} through a new approach,
finding in the same framework all these geodesic rays,
as well as others of partially hyperbolic type. See also the work of Burgos \cite{Bur}.

The Riemannian manifold $(\Omega,j_h)$ is clearly noncompact,
which motivates the study of the corresponding Busemann functions
associated to these geodesic rays.
Recently, by studying the value function
of the renormalized action functional previously introduced in \cite{PolTer},
Berti, Polimeni and Terracini recover in \cite{BePoTe} the Busemann functions
which are the object of the present article.
They prove several regularity results about them, in particular concerning the
Hausdorff dimension of the set of singularities.

Recall that a \emph{length space} is an arc-connected metric space
in which the arc length distance coincides with
the original distance. That is to say, the distance between two points
is precisely the infimum of all the lengths of curves joining them.
If moreover, the distance between any two points is the length
of some curve joining them, we say that the metric space is
a \emph{geodesic space}.
We also recall that in a length space, a curve defined
over a half-line $[t_0,+\infty)$ is a \emph{geodesic ray}
if the restriction to any compact interval $[t_0,t]$
is length minimizing.

\vspace{5pt}

\begin{definition}[Busemann functions]
\label{def:Busemann.metric}
Let $(M,d)$ be a given length space,
and let $\gamma:[t_0,+\infty)\to M$ be a geodesic ray.
The Busemann function associated to $\gamma$ is 
defined as the limit
\[
b_\gamma(x)=\lim_{t\to +\infty}\;\;
d(x,\gamma(t))-d(\gamma(t),\gamma(t_0))
=\lim_{t\to +\infty}\;\;
d(x,\gamma(t))-\;\textrm{length}(\gamma\mid_{[t_0,t]})\,.
\]
\end{definition}

\vspace{5pt}

It is straightforward to see that the previous limit exist,
and that the convergence is uniform on compact subsets.
Since the notion of Busemann function is central to this paper,
let us explain this with more details.
For $t\geq t_0$, let us denote $u_{\gamma,t}$ the function
\begin{equation} 
\label{eq:u-gamma-t}
u_{\gamma,t}(x)=d(x,\gamma(t))-d(\gamma(t),\gamma(t_0)).
\end{equation}
Now by triangular inequality,
and using that $\gamma$ is a geodesic ray,
we observe that for any $t_0<t<s$ and $x\in M$ we have
\begin{eqnarray*}
d(x,\gamma(s))&\leq& d(x,\gamma(t))+d(\gamma(t),\gamma(s))\\
&\leq&d(x,\gamma(t))+d(\gamma(s),\gamma(t_0))-d(\gamma(t),\gamma(t_0))   
\end{eqnarray*}
and we conclude that $u_{\gamma,s}\leq u_{\gamma,t}$.
On the other hand, also by triangular inequality, we have that
\[
u_{\gamma,t}(x)=
d(x,\gamma(t))-d(\gamma(t),\gamma(t_0))\geq
-d(x,\gamma(t_0)).
\]
Therefore the limit exists for every $x\in M$,
and moreover, since the functions $u_{\gamma,t}$ are
$1$-Lipschitz, the Busemann function
$b_\gamma=\lim_{\,t\to\infty} \,u_{\gamma,t}$
is also $1$-Lipschitz,
and the convergence is uniform on compact subsets.

The simplest example of a Busemann function can be obtained
by considering a Euclidean space, in which geodesic rays
are nothing more than half-lines, that we can parametrize
as $\gamma(t)=tv+x_0$ for $t\geq 0$. Since the Busemann
function does not depends on the parametrization of the ray,
we can assume that $\gamma$ is parametrized by its arc length,
that is $\norm{v}=1$. In this case we get
\[
u_{\gamma,t}(x)=\norm{x-(tv+x_0)}-t
\]
and the corresponding Busemann function is the affine function
given by
\[
b_\gamma(x)=-\inner{x-x_0}{v}\,.
\]

In order to clarify the meaning of the results we will present,
it is very useful to note the following facts that occur
in the previous trivial example.
\begin{enumerate}
    \item Given two geodesic rays, the corresponding Busemann functions
    differ by a constant if and only if the rays have the same direction.
    \item For every ray $\gamma(t)=tv+x_0$, the Busemann function $b_\gamma$
    is differentiable and its gradient is constant $\nabla b_\gamma (x)=-v$.
    Therefore every geodesic ray with the same direction as $\gamma$
    is obtained \emph{by reversing the parametrization} of a gradient line
    of $b_\gamma$.
\end{enumerate}

Busemann functions have been widely used in the geometric study
of Hadamard manifolds, where they have additional properties
like convexity and differentiability.
In particular, their role in the study of rigidity phenomena
of hyperbolic manifolds is noteworthy.
If we think the hyperbolic space $\H^n$ as an open ball of $\R^n$
endowed with a metric of constant negative curvature, then
the geodesic rays have a limit for the Euclidean distance in $\R^n$,
which is a point in the boundary sphere $\S^{n-1}$ of the ball.
In this case, two geodesic rays define the same Busemann function
-- up to a constant -- if and only if they have the same limit in
the sphere. Note that they are unbounded as curves in $\H^n$.
Moreover, they have the same limit in $\R^n$ if and only if
they remain at a bounded distance in $\H^n$.
These facts fail to be true if the space is not simply connected,
or if it is not negatively curved.
That is, two divergent rays can define the same Busemann function
\cite{DalPeiSam}, or two rays which remain at a bounded distance
can have Busemann functions whose difference is not constant,
see the infinite ladder example in \cite[p. 526]{MaVe}.

\begin{definition}
Given $\gamma : [t_0,+\infty)\to E^N$ an absolutely continuous curve. We say that $\gamma$ is a hyperbolic geodesic ray if 
$\gamma\left|_{(t_0,+\infty)}\right.$ is a hyperbolic motion of the $N$-body problem and for any $t>t_0$ we have 
\begin{equation}
\label{eq:A_L+h=phi_h}
\calA_{L+h}(\gamma\mid_{[t_0,t]})=\phi_h(\gamma(t_0),\gamma(t)),
\end{equation}
where $h>0$  is the energy of the motion $\gamma$.
\end{definition}
As we have already noticed in Remark \ref{rem-jm}, if $\gamma : [t_0,+\infty)\to E^N$ satisfies  (\ref{eq:A_L+h=phi_h}) for some $h>0$, then 
for all $t>t_0$ we have that $\gamma(t)$  is a solution of the $N$-body problem with energy $h$, and in particular $\gamma(t)\in\Omega$. 
In \cite[Proposition 5.1, p. 539]{MaVe} we have shown that $\phi_h$ is the 
completion of the Riemannian distance on $\Omega$ associated to the Jacobi metric $j_h$. It follows that $(E^N,\phi_h)$ 
is a geodesic space, and  each hyperbolic geodesic ray is a genuine geodesic ray of the space $(E^N,\phi_h)$. However, while a geodesic ray is invariant by reparametrization, condition 
(\ref{eq:A_L+h=phi_h}) is not. Actually, condition  (\ref{eq:A_L+h=phi_h}) fixes a parametrization of $\gamma$  such that $H(\gamma(t),\dot{\gamma}(t))=h$ for all $t>t_0$.

Geodesic rays of non-negative energy levels
in the Newtonian $N$-body problem have been extensively
studied in recent years.
First the critical case $h=0$ was studied, 
in particular in the works of Barutello, Terracini and Verzini
\cite{BaTeVe1,BaTeVe2}, in the work of the authors
\cite{MV}, as well as in the work of da Luz and the first author
\cite{daLMad} where it is proved that such motions are  
completely parabolic.
More recently,
this result was generalized by Burgos and the first author
in \cite{BuMa},
proving for the case of energy $h>0$
that every geodesic ray is an expansive motion.
Moreover, either the motion is hyperbolic,
or it must be broken down
into parabolic clusters moving away from each other
in a hyperbolic way, that is, of partially hyperbolic type (see Appendix \ref{app:More-Hyp}).

In the Newtonian $N$-body problem,
the study of the Busemann functions of the non-negative
energy levels endowed with their corresponding
Jacobi-Maupertuis metrics starts in \cite{Mad1}.
There, Busemann functions of the planar Kepler problem
are explicitly computed for the critical energy level $h=0$,
taking as geodesic ray a radial parabolic ejection.
One interesting observation resulting of this explicit determination
is the non-differentiability of these global solutions,
showing the need to consider weak solutions,
and especially viscosity solutions.
The extension to the general $N$-body problem
was done by Percino and Sánchez-Morgado in
\cite{PerSan}. They consider the Busemann functions
associated with parabolic homothetic motions
by minimal configurations,
and show that these functions are viscosity solutions.
Furthermore, they show that all their calibrating curves
are also parabolic motions with the same asymptotic direction.

We recall that a homothetic motion, that is of the form
$x(t)=\lambda(t)x_0$, only occurs through
a central configuration $x_0$,
and that the radial component $\lambda(t)$ must be taken
being a solution of a Kepler problem
$\ddot\lambda=-U(x_0)\lambda^{-2}$.
Given the central configuration $x_0$ there is,
up to translation of time,
only one choice of $\lambda(t)$ giving rise to a
parabolic motion for $t\to +\infty$.
A configuration $x_0\neq 0$ is said to be minimal if
it is a minimizer of $U$ restricted to the sphere
$\set{x\in E^N\mid \norm{x}=\norm{x_0}}$.
Minimal configurations always exist, and they are
central configurations with the additional property
that the associated homothetic parabolic motion is
a geodesic ray of the critical Jacobi-Maupertuis metric $j_0$.
A priori there could be non minimal central configurations
whose parabolic homothetic motion is a geodesic ray,
see \cite{Mad2}.

The first main result in this paper concerns Busemann functions
produced by hyperbolic geodesic rays.
\begin{theorem}
\label{theorem:1.uniqueness.Busemann}
Given $h>0$ and two hyperbolic geodesic rays
$\gamma_1(t)$ and $\gamma_2(t)$,
the following are equivalent.
\begin{enumerate}
    \item[(1)] The Euclidean distance $\norm{\gamma_1(t)-\gamma_2(t)}$ is bounded.
    \item[(2)] The Jacobi-Maupertuis distance $\phi_h(\gamma_1(t),\gamma_2(t))$ is bounded.
    \item[(3)] The two hyperbolic geodesic rays have the same limit shape.
    \item[(4)] The corresponding Busemann functions differ by a constant.
\end{enumerate}
\end{theorem}
The proof of Theorem \ref{theorem:1.uniqueness.Busemann} will be done in section \ref{s: proofs}.

\begin{remark}[Chazy's estimate]
\label{remark:Chazy}
In his celebrated work of 1922 \cite{Cha2}, Jean~Chazy proved
for hyperbolic motions the asymptotic expansion as $t\to +\infty$
\[
x(t)=ta-\log(t)\nabla U(a)+r(1/t,\log(t)/t)    
\]
where $r(z,w)$ is an analytic function in a neighbourhood
of $(0,0)$. It follows that the limit of $\dot x(t)$
exists and coincides with the limit shape $a$. That is,
\[
a\;\;=\;\;\lim_{t\to +\infty}\frac{x(t)}{t}\;\;=
\;\;\lim_{t\to +\infty}\dot x(t)\,.
\]
It also follows from Chazy's estimate that the first
and the third condition
in Theorem 1 above are equivalent to the following one:
\[
\norm{x_1(t)-x_2(t)}=o(t)
\textit{ as }t\to +\infty\,.
\]
\end{remark}

\begin{remark}
The fact that two geodesic rays whose distance remains bounded
define the same Busemann function (up to a constant) is, in general, not true.
It is very well known that this property holds in manifolds of negative curvature,
but this is not our case when the number of bodies is $N\geq 3$.
So Theorem \ref{theorem:1.uniqueness.Busemann} actually shows that the phenomenon can take place
in a context in which the metric is asymptotically flat.
\end{remark}

As a result of Theorem \ref{theorem:1.uniqueness.Busemann},
the Busemann function associated with a given configuration without collisions
configuration turns out to be well-defined.
The following definition allows us to eliminate
the indeterminacy of the constant.

\begin{definition}
[Busemann function associated with a configuration]
\label{def:Busemann.a}
For any given configuration without collisions $a\in\Omega$,
we call \emph{the Busemann function} of $a$
the function $b_a:E^N\to\R$ defined as
\[
b_a(x)=b_\gamma(x)-b_\gamma(0)
\]
where $\gamma$ is any hyperbolic geodesic ray having
configuration $a$ as limit shape.
\end{definition}

It is also common to find in the literature another
way to lift the indeterminacy of the constant.
This consists of considering, for a geodesic ray $\gamma : [t_0,+\infty)\to M$ in a length space $(M,d)$, the so-called Busemann
functions of two variables,
namely those associated with each geodesic ray by the limit
\begin{eqnarray*}
b_\gamma(x,y)&=&\lim_{t\to +\infty}\;\;
d(x,\gamma(t))-d(\gamma(t),y)\\
&=&\lim_{t\to +\infty}\;\;
u_{\gamma,t}(x)-u_{\gamma,t}(y)\\
&=&\;b_\gamma(x)-b_\gamma(y)\, ,
\end{eqnarray*}
where $u_{\gamma,t}(x)$ and $u_{\gamma,t}(y)$ are defined as in
(\ref{eq:u-gamma-t}).
Thus, Theorem \ref{theorem:1.uniqueness.Busemann}
says that for any two hyperbolic rays with the same limit shape,
the corresponding Busemann functions of two variables agree,
and Definition \ref{def:Busemann.a} above can be given by
\[
b_a(x)=b_\gamma(x,0)
\]
where $\gamma$ is any hyperbolic geodesic ray
whose limit shape is configuration $a\in\Omega$.

Our next result concerns all the hyperbolic motions.
\begin{theorem}
\label{thm:diff-busemann}
For any given hyperbolic motion $\gamma(t)=ta+o(t)$
defined for $t\geq t_0$, there exists $t_1>t_0$,
such that the restriction of $\gamma$ to $[t_1,+\infty)$  is a hyperbolic geodesic ray. Moreover, for all $t\geq t_1$
the Busemann function $b_a$ is differentiable
at $\gamma(t)$  and it satisfies
$\dot \gamma(t)=-\nabla b_a(\gamma(t))$.
\end{theorem}

\begin{remark}
It is worth emphasizing that throughout all this article,
the inner product in configuration space is the mass inner product.
Both gradients and Hessian operators are  taken with respect
to this inner product.
\end{remark}

Theorem \ref{thm:diff-busemann}   is reminiscent of \cite{MoecMontgMorg},
where it is proved, for the planar three-body problem,
that any parabolic solution with equilateral limit shape
is eventually a zero-minimizer, that is, a free time minimizer
for the lagrangian action.
That is, the property of being eventually minimizing
is obtained under a condition of minimality of the
central configuration that asymptotically dominates the parabolic expansion
(recall that the Lagrange configuration is a global minimizer
of the Newtonian potential relatively to the size of the configuration).
This strongly contrasts with our result,
since we do not require any hypothesis about the limit shape of the motion,
nor any other type: all hyperbolic motions are eventually minimizing.

\section{Hyperbolic viscosity solutions}
\label{sec:hypviscsol}

\subsection{Hyperbolic solutions to the Hamilton-Jacobi equation}
We will deduce Theorem \ref{theorem:1.uniqueness.Busemann} from a more general result concerning a special class of viscosity 
solutions of the Hamilton Jacobi equation
$H(x,d_xu)=h$. In \cite{MaVe} we highligth a mechanism to construct viscosity solutions
of $H(x,d_xu)=h$, where $H$ is the Hamiltonian of the system,
and $h>0$ is a chosen positive energy level. Here we will show that those viscosity solutions are unique, up to an additive constant. 
Let us start with some definitions, that are quite standard in weak KAM theory.
\begin{definition}[Dominated function]
Let $h>0$. A function $u: E^N\to \R$ is said to be dominated (by $L+h$) if for any absolutely continuous curve $\gamma: [a,b]\to E^N$ we have 
\[
u(\gamma(b))-u(\gamma(a))\leq \calA_{L+h}(\gamma).
\]
\end{definition}
Clearly this condition is equivalent to have
\[
u(y)-u(x) \leq \phi_h(x,y)
\]
for any pair of configurations $x,y\in E^N$,
or if we want, to the function $u$ being $1$-Lipschitz
with respect to the Jacobi-Maupertuis distance $\phi_h$.
In \cite{MaVe}, Theorem 2.11 we have proved that there are constants $\alpha,\beta>0$, depending only on the masses and on the dimension of $E$ such that for any $x,y\in E^N$ and $h\geq 0$ we have 
\begin{equation} \label{estim-phi-h-holder}
\phi_h(x,y)\leq \;
\left(\alpha\norm{x-y}+h\;\beta\norm{x-y}^2\right)^{1/2}\;.
\end{equation}
It follows that each dominated function is uniformly continuous and locally H\"older continuous of degree $1/2$ with 
respect to the Euclidean distance in $E^N$.  
Again in \cite{MaVe}, Propositions 2.4 and 2.8 and Lemma 2.5  we also proved that dominated functions are exactly  viscosity subsolutions of the Hamilton-Jacobi equation $H(x,d_xu)=h$, and that every viscosity subsolution is locally lipschitz on $\Omega$.  
\begin{definition} \label{def:calibrantes}
Given $h>0$ and $u:E^N\to\R$ a function dominated by $L+h$ (or equivalently a viscosity subsolution of the
Hamilton-Jacobi equation $H(x,d_x u)=h$), 
an absolutely continuous curve $\gamma:I\to E^N$,
defined on an interval $I$,
is said to be a calibrating curve of $u$
if for any $[\alpha,\beta]\subset I$ it holds
\[
u(\gamma(\beta))-u(\gamma(\alpha))=
\calA_{L+h}(\gamma|_{[\alpha,\beta]})\,.
\]
\end{definition} 

\begin{remark}
\label{rem:calibrat-implique-geod-min}
It follows straightforwardly from the domination condition that if $\gamma$ is calibrating for $u$, then for each $[\alpha,\beta]\subset I$ 
the following identity holds
\[
\phi_h(\gamma(\alpha),\gamma(\beta))=\calA_{L+h}(\gamma\left|_{[\alpha,\beta]}\right.) .
\]
In particular, $\gamma$ is a minimizing geodesic for the Jacobi-Maupertuis distance and it is a 
genuine solution of the $N$-body problem
with energy $h$ in $\interior{I}$. 
\end{remark}

We are now ready to give the following fundamental definition.
\begin{definition}[Hyperbolic viscosity solutions]
\label{def.hyp.visc.sol}
We say that a function $u\in C^0(E^N)$
is a hyperbolic viscosity solution of $H(x,d_xu)=h$
if it satisfies the following conditions
\begin{enumerate}
    \item[(i)] The function $u$ is dominated by $L+h$.
    \item[(ii)] For any configuration $x\in E^N$ there is
    at least a curve $\gamma_x:(-\infty,0]\to E^N$
    such that  $\gamma_x(0)=x$, and such that
    $\gamma_x$ calibrates $u$. 
    \item[(iii)] All of these calibrating curves,
    up to time reversing, are hyperbolic motions with the same limit shape.
\end{enumerate}
\end{definition}

\begin{remark}
Conditions (i) and (ii) are equivalent to asking $u$
to be a fixed point of the Lax-Oleinik semigroup,
which in turn implies that the function is a viscosity solution of $H(x,d_xu)=h$.
In fact, it is enough that for each configuration $x\in E^N$
there exists a small calibrating curve of $u$  that reaches $x$
and defined on some interval $(\epsilon_x,0]$, see \cite{MaVe}.
On the other hand, in the context of Tonelli Hamiltonians without singularities, 
viscosity solutions are precisely the fixed points
of the Lax-Oleinik semigroup.
When the considered energy level is the critical one,
the calibrating curves are used to relate viscosity solutions
with the Aubry-Mather theory, which is the basis
of the weak KAM theory \cite{Fa}, \cite{FaMa}.
As far as we know, in our case we cannot assure 
that every viscosity solution must satisfy condition (ii)
since, due to the singularities of the Hamiltonian,
the argument used in \cite{FaMa} for a nonsingular Tonelli Hamiltonian  cannot be applied here.
\end{remark}

\subsection{Uniqueness of the hyperbolic solution}
\label{ss.uniqueness.hyp.sol}

The main theorem in this section ensures that, up to a constant, there is
only one hyperbolic solution to the Hamilton-Jacobi equation
$H(x,d_xu)=h$ when the limit shape is given.

\begin{definition}\label{Sa}
Given a configuration  $a=(a_1,\dots,a_N)\in\Omega$,
let us denote by $\calS_a$
the set of hyperbolic viscosity solutions
of the Hamilton-Jacobi equation $H(x,d_xu)=h$,
where $h=\frac{1}{2}\norm{a}^2$, such that  
all its calibrating curves have the configuration $a$
as the common limit shape.
\end{definition}

\begin{remark}
If $a\in\Omega$ is the limit shape
of a hyperbolic motion, then $\norm{a}=\sqrt{2h}$,
where $h>0$ is the value of the energy constant of the motion.
This explains the choice of the constant $h$
in Definition \ref{Sa}, since as we have already noticed,
any calibrating curve of a hyperbolic viscosity solution $u\in\calS_a$
must have energy $h$.
\end{remark}

In \cite{MaVe}, section 3.2 we have introduced the notion of directed horofunction. 
Let us recall that $u\in C(E^N,\R)$ is said to be a horofunction directed by the configuration $a$ if there exists a sequence $(p_n)_n$ in $E^N$ and 
a sequence $(\lambda_n)_n$ in $[0,+\infty)$ such that $\lambda_n\to +\infty$ and $p_n=\lambda_n a +o(\lambda_n)$ as $n\to +\infty$ and 
moreover for each $x\in E^N$ we have
\[
u(x)=\lim_{n\to+\infty} \phi_h(x,p_n)-\phi_h(0,p_n).
\]
We denote $\calB_h(a)$ the set of horofunctions directed by $a$. 
\begin{remark} \label{rk:inclus-horo-hyp}
It is clear from the definition that each horofunction directed by $a$ 
is dominated by $L+h$, therefore we can restate Theorems 3.2 and 3.4 in \cite{MaVe} by saying that $\calB_h(a)\subset \calS_a$, and as remarked in \cite{MaVe}, 
p. 527, we know that $\calB_h(a)\neq \emptyset$, thus $\calS_a$ is still nonempty.     
\end{remark}
We can now state the main result of this section, which is the fundamental step in the proof of Theorems \ref{theorem:1.uniqueness.Busemann}. 
The proof of this Theorem will be done in section \ref{s: proofs}.
\begin{theorem}\label{thm:main}
If $u_1$ and $u_2$ are in $\calS_a$
then $u_2=u_1+k$ for some $k\in\R$.
\end{theorem}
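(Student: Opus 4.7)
Put $\phi=u_2-u_1$; the goal is to prove $\phi$ is constant on $E^N$. I would proceed in three steps, of which the first and third are routine weak KAM manipulations and the middle one is the crux.

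\emph{Step 1 (monotonicity along calibrating curves).}
Each $u_i$ is a viscosity subsolution, so for every absolutely continuous $\gamma$ and $a\leq b$,
\[
u_i(\gamma(b))-u_i(\gamma(a))\leq \int_a^b \bigl(L(\gamma,\dot\gamma)+h\bigr)\,dt,
\]
with equality when $\gamma$ calibrates $u_i$. Subtracting this equality for $u_2$ from the inequality for $u_1$ shows that $\phi\circ\gamma$ is non-decreasing along every curve calibrating $u_2$; exchanging roles shows it is non-increasing along those calibrating $u_1$. Fix $x\in E^N$ and choose, using Definition~\ref{def.hyp.visc.sol}(ii), calibrating curves $\gamma_1,\gamma_2:(-\infty,0]\to E^N$ of $u_1,u_2$ with $\gamma_1(0)=\gamma_2(0)=x$. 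One then obtains
\[
\phi(\gamma_2(-T))\;\leq\;\phi(x)\;\leq\;\phi(\gamma_1(-T))\qquad\text{for every }T\geq 0.
\]

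\emph{Step 2 (asymptotic value of $\phi$).}
By Definition~\ref{def.hyp.visc.sol}(iii), each $t\mapsto\gamma_i(-t)$ is a hyperbolic motion with limit shape $a$, so $\gamma_i(-T)=-Ta+o(T)$ as $T\to+\infty$. The theorem reduces to showing that $\phi(\gamma_i(-T))$ tends to a common constant $k$, independent of the chosen calibrating ray. For this I would isolate an open region $\cone_a\subset\Omega$, asymptotic to the half-line $\{-ta:t>0\}$, with the following properties:
\begin{enumerate}
\item[(a)] all mutual distances at any $y\in\cone_a$ are large, so the Newtonian potential is a small perturbation of the free Lagrangian;
\item[(b)] through each $y\in\cone_a$ passes a \emph{unique} hyperbolic ray with limit shape $a$ (produced by a shooting/perturbation argument from the free ray $t\mapsto y+ta$);
\item[(c)] both $u_1$ and $u_2$ are differentiable on $\cone_a$ with common differential equal to the initial velocity of that unique ray, which depends only on $y$ and $a$.
\end{enumerate}
Property~(c) gives $d\phi\equiv 0$ on the connected set $\cone_a$, so $\phi=k$ there. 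Since $\gamma_i(-T)\in\cone_a$ for $T$ sufficiently large, this yields $\phi(\gamma_i(-T))\to k$. A more intrinsic alternative, consistent with the paper's title, would be to identify each $u_i$ on $\cone_a$ with a Busemann function attached to $a$.

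\emph{Step 3 (conclusion).}
Letting $T\to+\infty$ in the inequalities of Step~1 gives $k\leq\phi(x)\leq k$, so $\phi\equiv k$ on $E^N$.

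\textbf{Main obstacle.}
The hard step is clearly Step~2. Far in the direction $-a$ the Newtonian interaction is a small perturbation of free motion, so existence of hyperbolic rays from $y\in\cone_a$ is essentially classical; however, one needs sharp uniform estimates to obtain uniqueness \emph{and} $C^1$-dependence on the endpoint $y$, from which differentiability of the viscosity solutions then follows via the Hamilton-Jacobi relation $d_yu_i=\partial_vL(y,\dot\gamma_i(0))$ at calibrated points. This is precisely the ``region of differentiability'' advertised in the abstract, and it is where the analytic heart of the paper must lie; once it is available, the monotonicity of Step~1 promotes constancy of $\phi$ on $\cone_a$ to constancy everywhere.
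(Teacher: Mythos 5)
Your Steps 1 and 3 reproduce the paper's closing argument exactly: the monotonicity of $\phi=u_2-u_1$ along calibrating curves of $u_1$ and $u_2$ is precisely the calibration-identity-versus-domination-inequality computation with which the paper finishes, and the reduction to showing that $\phi$ is constant on a cutted cone around the direction $a$ is the right skeleton. (A minor slip: since $t\mapsto\gamma_i(-t)$ has limit shape $a$, one has $\gamma_i(-T)=Ta+o(T)$, so the relevant cone is asymptotic to $\R^+a$, not $-\R^+a$; this is only a reversed convention and does not affect the logic.)

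The genuine gap is in Step 2, property (b). The statement ``through each $y\in\cone_a$ passes a unique hyperbolic ray with limit shape $a$'' is false as written: already in the planar Kepler problem there are, for generic $y$, two branches of hyperbolas emanating from $y$ with the same asymptotic velocity $a$ (the paper makes exactly this point in the remark following Theorem \ref{thm:cone}). What a shooting/perturbation argument from the free ray yields is uniqueness of the hyperbolic motion with limit shape $a$ that \emph{stays in the cone} (equivalently, whose initial velocity lies near $a$); this is item (1) of Theorem \ref{thm:cone} and is indeed obtained by an implicit-function argument on the limit shape map. But the calibrating curves of $u_1$ and $u_2$ arriving at $y$ are only known to be minimizers with limit shape $a$; a priori they could leave the cone before entering it definitively, in which case your property (c) --- and hence the constancy of $\phi$ on the cone --- does not follow from (b). Closing this gap requires the second, harder half of the cone theorem: any geodesic ray in $\calM_a$ starting deep enough in $\cone_a(\alpha,r)$ must remain in a slightly wider cone $\cone_a(\beta,r_1)$, which the paper proves through quantitative triangle-defect estimates $\calD(x,y,z)=\phi_h(x,y)+\phi_h(y,z)-\phi_h(x,z)>0$ for trajectories that dip back toward the origin or exit laterally (Section \ref{sec:proof.thm.cone}). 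Only after this does uniqueness of the calibrating curve, and with it the differentiability and the identity $d_yu_1=d_yu_2$ via Proposition \ref{prop:diff-unic-calibr}, become available.
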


\begin{remark} (Calibrating curves as geodesic rays). \label{rem:cal-crv-geod-rays}
Given a hyperbolic viscosity solution $u\in\calS_a$, by Remark \ref{rem:calibrat-implique-geod-min} we know that
any calibrating curve $\gamma$ of $u$ is minimizing for $\calA_{L+h}$ on each segment contained in $(-\infty,0]$.
Since the Lagrangian satisfies $L(x,-v)=L(x,v)$ for all $x,v\in E^N$, the same minimization property is true for the curve $\gamma(-t)$ obtained by reversing the time.
Consequently, the reversed curve is a hyperbolic geodesic ray.
\end{remark}

\section{Proof of the main theorems and some consequences} \label{s: proofs}

In this section, we will prove the main theorems of this article and deduce some interesting consequences. 

\subsection{Theorem of the cone}

The proof of Theorem \ref{thm:main} makes a strong use of
the following basic result, which will be proved
in sections \ref{sec:uniq.hyp.motions} and \ref{sec:proof.thm.cone}.
Furthermore, it leads to the announced differentiability
of Busemann functions over certain regions.

In order to state it, we need to introduce some notation.
For $\alpha\in (0,1)$ and $r>0$, let us set
\[
\cone_a(\alpha)=
\set{x\in E^N \mid\;\inner{x}{a} \geq  \alpha\,\norm{x}\norm{a}}
\qquad \text{and}\qquad 
\cone_a(\alpha,r)=\cone_a(\alpha)\setminus B(r)
\]
where $B(r)$ is as usual the open ball centered at the origin
with radius $r>0$.

The set $\cone_a(\alpha)$ is  the cone in $E^N$ with axis $\R^+a$
and angle equal to $\arccos \alpha$,
while  $\cone_a(\alpha,r)$ will be called \emph{cutted cone}.
From now on we set
\[
\alpha_0(a)=
\inf\set{\alpha\in (0,1)\mid \cone_a(\alpha)\cap\Delta=\set{0}}.
\]

\begin{theorem} [Theorem of the cone]\label{thm:cone}
Let $a\in\Omega$ be a given configuration, and $\alpha\in (0,1)$ such that
$\alpha>\alpha_0(a)$.
There is $r>0$ such that for any $x\in \cone_a(\alpha,r)$
the following two assertions hold.

\vspace{.2cm}
(1) There exists a unique hyperbolic motion
$\gamma:[0,+\infty)\to\Omega$
with limit shape $a$, starting at $x$, and such that
$\gamma(t)\in \cone_a(\alpha,r)$ for all $t\ge 0$.

\vspace{.2cm}
(2) The curve
  $\gamma$ is the unique hyperbolic geodesic ray starting at $x$ and having limit shape $a$.  
\end{theorem}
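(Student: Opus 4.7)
The existence claim in (1) comes essentially from \cite{MaVe}: for any configuration $x$ there is at least one hyperbolic motion starting at $x$ with prescribed limit shape $a$. Two preliminary observations guide the rest of the proof. First, by the very definition of $\alpha_0(a)$, on $\cone_a(\alpha)$ with $\alpha>\alpha_0(a)$ all mutual distances $r_{ij}$ are bounded below by a fixed positive multiple of $\norm{x}$, so $U(x)=O(\norm{x}^{-1})$ and $\nabla U(x)=O(\norm{x}^{-2})$. Second, by Chazy's expansion recalled in the introduction, every hyperbolic motion with limit shape $a$ satisfies $\gamma(t)=ta-\log(t)\nabla U(a)+o(1)$ and $\dot\gamma(t)\to a$, so it eventually lies in any prescribed neighbourhood of the ray $\R^+a$.

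The first step is \emph{confinement}: I would show that, for $r$ large enough depending only on $a$ and $\alpha$, every hyperbolic motion from $x\in\cone_a(\alpha,r)$ with limit shape $a$ stays in $\cone_a(\alpha)$ for all future times. I would monitor the ratio $\inner{\gamma(t)}{a}/(\norm{\gamma(t)}\norm{a})$, using that $\ddot\gamma=\nabla U(\gamma)=O(\norm{\gamma}^{-2})$ inside the cone together with energy conservation, which pins $\norm{\dot\gamma}$ near $\norm{a}$. An excursion outside $\cone_a(\alpha)$ would force a transverse displacement incompatible with the Chazy asymptotic $\gamma(t)=ta+o(t)$.

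Uniqueness in (1) should then follow from a Chazy-type parametrization of the stable set at infinity. The family of hyperbolic motions with fixed limit shape $a$ is smoothly parametrized by the translation parameter $b=\lim_{t\to+\infty}\bigl(\gamma(t)-ta+\log(t)\nabla U(a)\bigr)$, whose existence is part of Chazy's theorem. The evaluation map $\Phi_a\colon b\mapsto\gamma(0)$ is smooth, and for motions that stay deep in the cone its differential is a small perturbation of the identity, hence $\Phi_a$ is a local diffeomorphism by the implicit function theorem. Combined with the surjectivity onto $\cone_a(\alpha,r)$ provided by \cite{MaVe} and the simple connectedness of the cone, a covering-space argument promotes this into a global diffeomorphism, giving the uniqueness.

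For (2), any geodesic ray of $j_h$ with limit shape $a\in\Omega$ is necessarily a hyperbolic motion of energy $h=\frac12\norm{a}^2$: by the Maupertuis principle it solves Newton's equations with energy $h$, and $\gamma(t)/t\to a\in\Omega$ provides both the expansivity and the absence of collisions at infinity. The confinement step then applies from $t=0$ and forces the ray to stay in $\cone_a(\alpha,r)$ once $r$ is large enough and $x\in\cone_a(\alpha,r)$, so by Part (1) it coincides with the hyperbolic motion constructed there. The main obstacle will be the quantitative confinement estimate together with the global invertibility of $\Phi_a$ on the full cutted cone: local invertibility is standard, but controlling it uniformly across $\cone_a(\alpha,r)$ requires a quantitative Chazy-type estimate uniform in $b$, and this is what will make the two forthcoming sections substantial.
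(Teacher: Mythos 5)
There is a genuine gap, and it sits at the heart of your plan: the \emph{confinement} claim --- that for $r$ large every hyperbolic motion starting at $x\in\cone_a(\alpha,r)$ with limit shape $a$ stays in $\cone_a(\alpha)$ --- is false. The planar Kepler problem already refutes it (and the paper's remark following the statement of the theorem makes exactly this point): for $x$ with $\inner{x}{a}>0$ there are \emph{two} branches of Keplerian hyperbolas through $x$ with asymptotic velocity $a$, and only one of them remains in a cone around $a$; the other first travels toward the attracting center, leaves any such cone, and only later escapes in the direction $a$. Your heuristic that ``an excursion outside $\cone_a(\alpha)$ would force a transverse displacement incompatible with $\gamma(t)=ta+o(t)$'' does not hold: a finite-time excursion produces only a bounded transverse displacement, which is perfectly compatible with the $o(t)$ asymptotics. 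This false step propagates: your covering-space argument would make the evaluation map $b\mapsto\gamma(0)$ globally injective and hence would prove uniqueness of the hyperbolic motion with limit shape $a$ from $x$ \emph{without} the ``contained in the cone'' restriction, which the Kepler example shows cannot be true. The correct statement of (1) is uniqueness only among motions contained in $\cone_a(\alpha,r)$, and the paper obtains it by fixing $x_0$ and applying a quantitative implicit function theorem to $v_0\mapsto\sm(x_0,v_0)$ on the ball $\overline B(a,\delta)$ of initial \emph{velocities} (Proposition \ref{thm-unicite-loc}), where confinement does hold (Lemma \ref{exist-applic-figlim}) precisely because the velocity is controlled from $t=0$; the needed estimate $\partial\sm/\partial v\approx \mathrm{Id}$ comes from the Jacobi-field growth bounds of Section 3. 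Your Chazy-parameter map $\Phi_a$ is a plausible alternative chart on the stable set at infinity, but you would still have to restrict its domain to the in-cone branch, which is exactly the confinement problem again.

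The same gap undermines your proof of (2). Since not every hyperbolic motion with limit shape $a$ from $x\in\cone_a(\alpha,r)$ stays in the cone, you must use the hypothesis that $\gamma$ is a \emph{geodesic ray} (a minimizer) to exclude the stray branches; your argument never invokes minimality. The paper does this by an action-defect estimate: if the ray left the cone and re-entered at a point $y$ much closer to the origin, the triangle defect $\calD(x,y,z)=\phi_h(x,y)+\phi_h(y,z)-\phi_h(x,z)$ would be strictly positive (Lemma \ref{lemma:D-estim}, comparing against the straight segment from $x$ to $z$ with speed $\sqrt{2h}$), contradicting $\calD=0$ along a minimizer. Some quantitative substitute for this step --- an argument that genuinely uses minimality to confine the ray --- is indispensable and is missing from your proposal.
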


\begin{remark} \label{rem:kepler}
For the planar Kepler problem, given a non-zero vector $a$ of the plane, and any initial position $x\notin \R_+a$, we always have exactly two hyperbolic motions starting 
at $x$ and with limit shape given by $a$. Indeed, denting $P_+$ (resp. $P_-$) the half plane that is located at the right (resp. left) of the oriented line generated by $a$, let us consider the family of all keplerian hyperbolas with energy $h=\norm{a}^2/2$, positive angular momentum and axis given by 
$\R_+a$. We can now rotate each keplerian hyperbola so that its limit shape in the future becomes equal to $a$. It is clear now that each initial position $x\notin \R_+a$ lies in exactly one hyperbola of this new family, thus we can consider the branch of hyperbola starting from $x$.   If $x\in P_+$, the total variation of the polar 
angle along this motion is less or equal to $\pi$, and we will say that this branch of hyperbola is direct, while if  $x\in P_-$ this total variation is bigger or equal to $\pi$, and we will say that this branch of hyperbola is indirect. By repeating the same construction with the family of keplerian hyperbolas having negative angular momentum, 
we see that for each $x\in \R^2\setminus \R_+a$ there are exactly two branches of keplerian hyperbolas starting at $x$ and with limit shape $a$, one is  direct and the other is indirect.  If $x\in -\R_+a$, both branches are half-hyperbolas, they are symmetric with respect to the line $\R a$ and are at the same time direct and indirect.  An illustration of these two branches of hyperbolas can be find in Figure 5, pp. 545 in \cite{MaVe}.  
It is clear that an indirect branch is never contained in a cone $\cone_a(\alpha)$. Moreover, by using Lambert's Theorem (see for instance \cite{Alb}), one can prove that when $x\notin \R a$, the indirect branch is never a geodesic ray. This exemple shows that the uniqueness of the hyperbolic motion $\gamma$ in Theorem \ref{thm:cone} is no more true 
if we do not require that $\gamma(t)$ is all the time contained in the cone $\cone_a(\alpha,r)$ or that $\gamma$ is a geodesic ray. 
\end{remark}

\begin{corollary}[Differentiability of hyperbolic solutions]
\label{coro:diff.ty.in.cone}
Given $a\in\Omega$ there is a cutted cone $\cone=\cone_a(\alpha,r)$
such that any $u\in\calS_a$ is differentiable on $\cone$.
\end{corollary}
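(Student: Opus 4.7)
The plan is to combine part~(2) of Theorem~\ref{thm:cone} with the classical weak KAM principle that, for a fixed point of the Lax--Oleinik semigroup, differentiability at a point $x$ is equivalent to uniqueness of the calibrating curve ending at $x$. Let $\cone=\cone_a(\alpha,r)$ be the cutted cone produced by Theorem~\ref{thm:cone}; since $\alpha>\alpha_0(a)$ and $r>0$, the definition of $\alpha_0(a)$ gives $\cone\subset\Omega$, so on a neighbourhood of $\cone$ the Lagrangian $L$ and Hamiltonian $H$ are smooth.

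First I would verify that for every $x\in\cone$ and every $u\in\calS_a$, the calibrating curve $\gamma_x:(-\infty,0]\to E^N$ provided by condition~(ii) of Definition~\ref{def.hyp.visc.sol} is unique. Let $\gamma_x$ be any such curve. By condition~(iii), its time-reversal $\tilde\gamma(t)=\gamma_x(-t)$ is a hyperbolic motion with limit shape $a$; by the remark identifying calibrating curves with geodesic rays of the Jacobi--Maupertuis metric $j_h$, the curve $\tilde\gamma$ is therefore a $j_h$-geodesic ray based at $x$ with limit shape $a$. Part~(2) of Theorem~\ref{thm:cone} asserts that such a ray is unique, whence $\gamma_x$ itself is unique.

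Next I would invoke the standard weak KAM fact (see \cite{FaMa}): a viscosity subsolution which is a fixed point of the Lax--Oleinik semigroup is locally semi-concave on the smooth region of $L$, and at any point where the calibrating curve ending there is unique the solution is differentiable, with $d_xu=\partial_v L(x,\dot\gamma_x(0))$. Since every $u\in\calS_a$ is such a fixed point by the remark after Definition~\ref{def.hyp.visc.sol}, and since $\cone\subset\Omega$ lies in the smooth region, this yields differentiability of $u$ at every point of $\cone$.

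The step I expect to be most delicate is not the geometric uniqueness -- which Theorem~\ref{thm:cone} hands us directly -- but the transfer of the classical ``uniqueness of calibration implies differentiability'' argument to the singular $N$-body Lagrangian, since our solutions are only continuous across $\Delta$. On the cutted cone, however, one is uniformly bounded away from $\Delta$, and the local semi-concavity derived from the backward Lax--Oleinik representation applies on this smooth region, so the obstruction disappears and Fathi's argument goes through unchanged.
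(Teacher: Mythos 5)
Your proof is correct and follows essentially the same route as the paper: reduce differentiability at $x\in\Omega$ to uniqueness of the calibrating curve arriving at $x$, and obtain that uniqueness by reversing time and applying part~(2) of Theorem~\ref{thm:cone}. The ``delicate step'' you flag---transferring Fathi's uniqueness-of-calibration-implies-differentiability argument to the singular $N$-body Lagrangian---is precisely what the paper's Proposition~\ref{prop:diff-unic-calibr} in the Appendix establishes, so your appeal to the weak KAM literature is replaced there by a self-contained adaptation.
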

\begin{proof}
Here we use the characterization of the points of differentiability
given by Proposition \ref{prop:diff-unic-calibr} in the Appendix \ref{ViscoSol}.
The differentiability of a hyperbolic viscosity solution $u\in\calS_a$
in a configuration without collisions $x\in\Omega$ is equivalent
to the uniqueness of the calibrating curve $\gamma_x:(-\infty,0]\to E^N$
reaching $x$ at $t=0$.

Now, given $a\in\Omega$ and $u\in\calS_a$, let $\cone=\cone_a(\alpha,r)$
be any cutted cone given by Theorem \ref{thm:cone}.
Let $x\in\cone$ and suppose that $\gamma_x,\gamma_x':(-\infty,0]\to E^N$
are two calibrating curves of $u$, reaching $x$ at $t=0$.
As shown in Remark \ref{rem:cal-crv-geod-rays}, the reversed curves 
defined on $[0,+\infty)$ by $\gamma(t)=\gamma_x(-t)$
and $\gamma'(t)=\gamma_x'(-t)$ are both geodesic rays starting at $x\in\cone$,
and both having limit shape $a$. By assertion (2) of Theorem \ref{thm:cone}
we have $\gamma=\gamma'$, hence that $\gamma_x=\gamma_x'$.
Therefore $u$ is differentiable at any $x\in\cone$.
\end{proof}

\subsection{Proof of the main results}

\label{ss:proofmain}

In order to prove the theorems stated in subsections  \ref{ss:Hyp-mot-bsm-fnct} and \ref{ss.uniqueness.hyp.sol}, 
let us fix some notations.
For $h>0$ we define the set
\[
\Omega_h=\set{a\in\Omega\mid \norm{a}=\sqrt{2h}}.
\]
If $a\in\Omega_h$ we denote $\calH_a$ the set of absolutely continuous curves $\gamma:[t_0,+\infty)\to E^N$ such that $\gamma\left|_{(t_0,+\infty)}\right.$
is a hyperbolic motion with limit shape $a$, and we denote $\calM_a$ the set of those $\gamma\in\calH_a$ that are a hyperbolic geodesic ray at energy level $h$.
\begin{proof}[Proof of Theorem \ref{thm:main}]
Let $\alpha\in (\alpha_0(a),1)$ be fixed, and
$r>0$ given by the theorem of the cone (Theorem \ref{thm:cone}).
By choice of $\alpha$
we know that $\cone_a(\alpha,r)\subset \Omega$.
As already noticed in Remark \ref{rem:cal-crv-geod-rays}, each calibrating curve of a hyperbolic viscosity solution 
is - up to time reversing - a hyperbolic geodesic ray. Moreover, by (2) of Theorem \ref{thm:cone}, if $x\in\cone_a(\alpha,r)$, 
there is a unique element of $\calM_a$ starting from $x$ at time $t=0$, therefore 
if $u_1$ and $u_2$ are in $\calS_a$ and $x\in \cone_a(\alpha,r)$, 
we know that each global calibrating curve for $u_1$ or $u_2$
arriving at $x$ must agree - up to time reversing -
with the unique element of $\calM_a$ starting at $x$ at time $t=0$.
Since the differentiability of a hyperbolic viscosity solution at a configuration without collisions 
is characterized by the uniqueness of the calibrating curve
(see Proposition \ref{prop:diff-unic-calibr} in Appendix \ref{ViscoSol}),
we have that $u_1$ and $u_2$ are differentiable at configuration $x$,
and moreover, $d_xu_1=d_xu_2$.
In particular $u_2-u_1$ is constant on the cutted cone $\cone_a(\alpha,r)$.
Denote $k=u_2(x)-u_1(x)$ for $x\in \cone_a(\alpha,r)$.
Let us show now that $u_2-u_1$ is constant on the whole configuration space $E^N$.
Given $x\in E^N$, by definition of hyperbolic viscosity solution, there exists $\gamma\in \calH_a$, starting at $x$ at time $t=0$,
such that the curve obtained by reversing the time is calibrating for $u_1$. As already remarked, the calibration property implies that $\gamma\in\calM_a$.
Since $\gamma(t)=ta+o(t)$ as $t\to +\infty$, then
\[
\inner{\gamma(t)}{a}=t\norm{a}^2+o(t),\quad \text{and} \quad \norm{\gamma(t)}=t\norm{a}+o(t),\quad \text{as}\quad t\to +\infty,
\]
therefore, by definition of $\cone_a(\alpha,r)$,
there exists $t>0$ such that $\gamma(t)\in \cone_a(\alpha,r)$.
By the calibrating identity for $u_1$ and the domination inequality
for $u_2$ we have   
\[
\begin{array}{rl}
u_1(x)-u_1(\gamma(t))&=\calA_{L+h}(\gamma\left|_{[0,t]}\right.) \\
u_2(x)-u_2(\gamma(t))&\leq \calA_{L+h}(\gamma\left|_{[0,t]}\right.),
\end{array}
\]
therefore $u_2(x)-u_1(x)\leq k$.
Repeating the argument by exchanging $u_1$ and $u_2$
we find that $u_2(x)-u_1(x)=k$ for all $x\in E^N$.
\end{proof}
 Theorems \ref{theorem:1.uniqueness.Busemann} and \ref{thm:diff-busemann} are now direct consequences of Theorems \ref{thm:main} and \ref{thm:cone} and of the following 
\begin{lemma} \label{lem:busemann-vs-hypviscsol}
Given $h>0$, $a\in\Omega_h$ and $\gamma\in \calM_a$, supposed defined on $[0,+\infty)$ then 
\begin{itemize}
    \item[i)] $b_\gamma-b_\gamma(0)$ is a horofunction directed by $a$. 
    \item[ii)] $b_\gamma$ is a hyperbolic viscosity solution with limit shape given by $a$.
    \item[iii)] The curve $\sigma : (-\infty,0]\to E^N$, defined by $\sigma(t)=\gamma(-t)$ is calibrating for $b_\gamma$.
\end{itemize}
\end{lemma}
\begin{proof}
By definition of Busemann function, for each $x\in E^N$ we have
\[
b_\gamma(x)-b_\gamma(0)=\lim_{n\to +\infty} \phi_h(x,\gamma(n))-\phi_h(0,\gamma(n)).
\]
Since $\gamma$ is a hyperbolic motion with limit shape $a$ it holds
\[
\gamma(n)=na+o(n),\quad n\to +\infty.
\]
This observation shows that $b_\gamma-b_\gamma(0)$ is a horofunction directed by $a$, therefore i) is proved. In  
Remark \ref{rk:inclus-horo-hyp}  we have already noticed that each horofunction directed by $a$ is a hyperbolic viscosity solution, 
and it follows from the definition of hyperbolic viscosity solution  that $\calS_a$ is invariant by addition with a constant, that is 
$b_\gamma\in \calS_a$, as stated in ii). Since $\gamma$ is a geodesic ray, for any $t\geq 0$ we have 
\[
\begin{array}{rl}
b_\gamma(\gamma(0))-b_\gamma(\gamma(t))&=\lim_{s\to +\infty}  \phi_h(\gamma(t),\gamma(s))-\phi_h(\gamma(0),\gamma(s)) \\
&=\phi_h(\gamma(0),\gamma(t))=\calA_{L+h}(\gamma\left|_{[0,t]}\right.).
\end{array}
\]
This identity can be expressed by saying that the reversed curve $\sigma$ is calibrating for $b_\gamma$, as stated in iii).
\end{proof}
\begin{remark} \label{rem:sol-hyp-vs-busemann}
Given $a\in\Omega$ and $u\in\calS_a$, if $p\in E^N$ then $u-u(p)$ is a Busemann function associated to a hyperbolic geodesic ray with limit shape given by $a$. Indeed, let $\gamma : [0,+\infty)\to E^N$ be a hyperbolic geodesic ray 
 starting at the point $p$ at $t=0$ and obtained by reversing a curve that calibrates $u$. By Lemma \ref{lem:busemann-vs-hypviscsol} we know that $b_\gamma\in\calS_a$. Moreover, both $b_\gamma$ and $u-u(p)$ vanish at $p$. Since $u-u(p)\in\calS_a$, by Theorem \ref{thm:main} we have that 
 $b_\gamma=u-u(p)$. We do not know if $u\in\calS_a$ is always a Busemann function, or equivalently, if every  hyperbolic viscosity solution vanishes at some point 
 $p\in E^N$.
 
\end{remark}

\begin{proof}[Proof of Theorem \ref{theorem:1.uniqueness.Busemann}]
Let $h>0$ and $\gamma_1$, $\gamma_2$ be two hyperbolic geodesic rays with their respective limit shape $a_1,a_2\in\Omega$. 
Without loss of generality we can assume that $\gamma_1$ and $\gamma_2$ are both defined on $[0,+\infty)$.  
The fact the $(1)$ and $(3)$ are equivalent is a direct consequence of
Chazy's estimate, that we have recalled in Remark \ref{remark:Chazy}.
On the other hand, it is not difficult to see that $(1)$ and $(2)$
are also equivalent. Indeed, in \cite{MaVe} Lemma 4.4, we have proved that for any $x,y\in E^N$ we have 
\[
\phi_h(x,y)\geq \sqrt{2h}\norm{x-y},
\]
therefore, if $\phi_h(\gamma_1(t),\gamma_2(t))$ is bounded, then $\norm{\gamma_1(t)-\gamma_2(t)}$ is bounded too.
Conversely, if we assume that $\norm{\gamma_1(t)-\gamma_2(t)}$ is bounded from above by a constant $A$,  we know from what we have proved that $a_1=a_2$, thus we set $a=a_1=a_2$.  Let now $\alpha\in (\alpha_0(a),1)$ and $r>0$. Since $\gamma_1$ and $\gamma_2$ are both hyperbolic motions with limit shape $a$, we know that there exists $T>0$ such that $\gamma_i(t)\in \cone_a(\alpha,r/\alpha)$ for $i=1,2$ and for all $t\geq T$. Let $d_t=(2h)^{-1/2}\norm{\gamma_1(t)-\gamma_2(t)}$ and 
$\delta_t: [0,d_t]\to E^N$ be the segment joining $\gamma_1(t)$ with $\gamma_2(t)$, parametrized with constant speed equal to $\sqrt{2h}$. 
It is immediate to verify that 
if $t\geq T$ and $s\in [0,d_t]$ then $\delta_t(s)\in \cone_a(\alpha,r)$. Since $U$ is positive, continuous on $\Omega$ and homogeneous of degree $-1$, by choice of $\alpha$ there exists $\mu>0$ such that for each $x\in \cone_a(\alpha,r)$ we have $U(x)\leq \mu/\norm{x}$, therefore if $t\geq T$ it holds
\[
\phi_h(\gamma_1(t),\gamma_2(t))\leq \calA_{L+h}(\delta_t)\leq \left( 2h+\frac{\mu}{r}\right)d_t\leq  \sqrt{2h}\left(1+\frac{\mu}{2hr}\right)A,
\]
in particular $\phi_h(\gamma_1(t),\gamma_2(t))$ is bounded. 

Now, we only have to prove the equivalence between $(3)$ and $(4)$.
 Let us denote $b_{\gamma_1}$ and $b_{\gamma_2}$ Busemann functions associated with $\gamma_1$ and $\gamma_2$. If $a_1=a_2$, by Lemma \ref{lem:busemann-vs-hypviscsol} and Theorem \ref{thm:main} we have that $b_{\gamma_1}-b_{\gamma_2}$ is constant. Conversely, let us suppose that $b_{\gamma_1}-b_{\gamma_2}$ is constant. If we set $h_i=\norm{a_i}^2/2$, $i=1,2$ and we denote $\sigma_i : (-\infty,0]\to E^N$, $\sigma_i(t)=\gamma_i(-t)$ the reversed curve of $\gamma_i$, by iii) of Lemma \ref{lem:busemann-vs-hypviscsol} we know that 
$\sigma_i$ is calibrating for $b_{\gamma_i}$ at level $h_i$.  
Applying  again item ii) of Lemma \ref{lem:busemann-vs-hypviscsol} we know that $b_{\gamma_1}$ is a hyperbolic viscosity solution with limit shape 
given by $a_1$. Since  $\calS_{a_1}$ is invariant by addition with a constant, we have 
$b_{\gamma_2}\in\calS_{a_1}$. In particular, all calibrating curve of $b_{\gamma_2}$ - up to time reversing - are hyperbolic motions with limit shape $a_1$. 
Since $\gamma_2$ is the reversed curve of $\sigma_2$, which is calibrating for $b_{\gamma_2}$, the limit shape of $\gamma_2$ is $a_1$, that is $a_1=a_2$.  
\end{proof} 

\begin{proof}[Proof of Theorem \ref{thm:diff-busemann}]
Let $\gamma:[t_0,+\infty)\to\Omega$ be any hyperbolic motion with limit shape $a\in\Omega$, and let $h=\norm{a}^2/2$ be 
the energy constant of $\gamma$. 
Now we choose a cutted cone $\cone_a(\alpha,r)$
around the configuration $a$ for which
the conclusion of the theorem of the cone (Theorem \ref{thm:cone}) holds.
Since we have that $\gamma(t)=ta+o(t)$ as $t\to +\infty$,
we deduce that there exists $t_1\geq t_0$ such that
$\gamma(t)\in\cone_a(\alpha,r)$ for all $t\geq t_1$.
Let us denote by $x_1$ the configuration
$\gamma(t_1)\in\cone_a(\alpha,r)$.
Then by the theorem of the cone, the restriction of $\gamma$
to $[t_1,+\infty)$ is in the set of hyperbolic geodesic rays $\calM_a$. By Lemma \ref{lem:busemann-vs-hypviscsol} and the invariance of $\calS_a$ by addition with a constant we know that $b_a$ is a hyperbolic viscosity solution, therefore 
for each $x\in E^N$ there is a calibrating curve for $b_a$ arriving at $x$ which is the time reversion of a hyperbolic geodesic ray with limit shape $a$.
By the theorem of the cone, for each $x\in \cone_a(\alpha,r)$ there is a unique element of $\calM_a$ starting at $x$, therefore we can say that 
there is a unique calibrating curve arriving at $x$. 
In particular, up to time translation, for each $t\geq t_1$ the curve obtained by reversing  $\gamma\left|_{[t,+\infty)}\right.$ is the unique calibrating curve for $b_a$ arriving at $\gamma(t)$. 
By Proposition \ref{prop:diff-unic-calibr} in the Appendix \ref{ViscoSol} we know now that $b_a$ is differentiable in $\cone_a(\alpha,r)$,
and its gradient at each point of the cone is equal
to the velocity of the unique calibrating curve that passes through it.
Then for $t\geq t_1$ we have $\dot\gamma(t)=-\nabla b_a(\gamma(t))$.
\end{proof}

\begin{remark}
By Theorem \ref{thm:diff-busemann} and Lemma \ref{lem:busemann-vs-hypviscsol} we have that if $\gamma : [t_0,+\infty)\to \Omega$ is a hyperbolic motion, then 
there exists $t_1\geq t_0$ such that $\gamma^\prime=\gamma\left|_{[t_1,+\infty)}\right.$ is a hyperbolic geodesic rays, and in particular, the curve obtained 
by reversing $\gamma^\prime$ is calibrating for $b_{\gamma^\prime}$.
\end{remark}

\subsection{Corollaries of the main theorems}
\label{ss:corollaries}

Several corollaries follow from the main theorems.
In order to state them we need to introduce some notions 
as well as to recall some facts.
In our previous work \cite{MaVe} we have defined,
for $h>0$, the 
Gromov's ideal boundary of the length space $(E^N,\phi_h)$, here denoted as $\calB_h$. If $a\in\Omega_h$ then 
the set $\calB_h(a)$ of horofunctions directed by $a$ is included in $\calB_h$.
Let us recall this construction.
Given $h>0$, let us call $\calD_h$
the set of functions dominated by $L+h$ and vanishing at $0\in E^N$
\[
\calD_h=\set{u:E^N\to \R\mid u(0)=0, \text{ and }
u(y)-u(x) \leq \phi_h(x,y) \text{ for all }x,y\in E^N}.
\]
By inequality (\ref{estim-phi-h-holder}) we have that $\calD_h$ is an equicontinuous and equibounded family of functions, 
or equivalently, it is a compact subset of $C^0(E^N,\R)$ for the topology of uniform convergence
on compact sets. This is one of the main tools we used in \cite{MaVe}
to construct hyperbolic viscosity solutions as directed horofunctions.
For each $p\in E^N$, let us denote 
\[
u_p : E^N\to \R,\qquad u_p(x)=\phi_h(x,p)-\phi_h(0,p).
\]
It follows straightforwardly that $u_p\in\calD_h$. In particular, for any sequence $(p_n)_n$ in $E^N$,
we can always extract
a subsequence of ($u_{p_n})_n$ that converges uniformly on every compact subset of $E^N$.
\begin{definition}[Gromov's Ideal boundary]
For any given $h\geq 0$, the set of functions $u\in C^0(E^N)$
which are the limit of a sequence
$u_{p_n}(x)=\phi_h(x,p_n)-\phi_h(p_n,0)$
where $\norm{p_n}\to +\infty$ is called the Gromov
ideal boundary of level $h$, and it is denoted $\calB_h$.
\end{definition}
The elements of the ideal boundary $\calB_h$
are called horofunctions.
Clearly we have that $\calB_h\subset\calD_h$, and if $a\in\Omega_h$ then $\calB_h(a)\subset \calB_h$, where $\calB_h(a)$ 
denotes the set of horofunctions directed by $a$, that we have introduced in subsection \ref{ss.uniqueness.hyp.sol}.
We have proved in our previous work that each
function in $\calB_h$ is a global viscosity solution to the
Hamilton-Jacobi $H(x,d_xu)=h$,
see \cite[Thm. 3.1 p. 524]{MaVe}. Actually, we proved
that they are fixed points of the Lax-Oleinik semigroup,
that is to say, they verify conditions
(i) and (ii) in the definition of hyperbolic viscosity solution
(see Definition \ref{def.hyp.visc.sol}). 
The corollaries that we will deduce from Theorems \ref{theorem:1.uniqueness.Busemann}, \ref{thm:diff-busemann} and \ref{thm:main}
concern the hyperbolic geodesic rays, their associated Busemann functions,
and the Gromov ideal boundary for the Jacobi-Maupertuis distance
of positive energy levels. We can now state and prove them.

\begin{corollary}[Uniqueness of the directed horofunction]
\label{coro.unique.direct.horo}
Let $h>0$ and $a\in\Omega_h$. Then the Gromov's ideal boundary
of the Jacobi-Maupertuis metric $j_h$ contains
a unique horofunction directed by the configuration $a$. This unique directed horofunction is exactly the Busemann function $b_a$ of the configuration $a$.
\end{corollary}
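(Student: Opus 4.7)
The plan is to deduce this statement by chaining together three previously-established facts: the inclusion $\calB_h(a)\subset\calS_a$ recalled just above, Theorem \ref{thm:main}, and Corollary \ref{coro:diff.ty.in.cone}. Existence of at least one element of $\calB_h(a)$ is guaranteed by the construction in \cite{MaVe}: for any $a\in\Omega_h$ there exists a hyperbolic geodesic ray $\gamma\in\calM_a$, and its Busemann function $b_\gamma$ is, by definition, a horofunction directed by $a$, so $\calB_h(a)\neq\varnothing$.

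For uniqueness I would take two arbitrary $b_1,b_2\in\calB_h(a)$. By the inclusion $\calB_h(a)\subset\calS_a$, both are hyperbolic viscosity solutions of $H(x,d_xu)=h$ sharing the same limit shape $a$, so Theorem \ref{thm:main} yields $b_2=b_1+k$ for some $k\in\R$. Since points of the Gromov ideal boundary $\calB_h$ are by construction equivalence classes of continuous functions modulo additive constants, $b_1$ and $b_2$ represent the same point of $\calB_h$. Hence $\calB_h(a)$ consists of a single point, which we name $b_a$.

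For the differentiability claim I would pick any representative of $b_a$ in $\calS_a$ and apply Corollary \ref{coro:diff.ty.in.cone} to obtain a cutted cone $\cone_a(\alpha,r)\subset\Omega$ on which this representative is differentiable. Since adding a constant does not affect differentiability, the conclusion is independent of the choice of representative and therefore holds for $b_a$.

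There is no genuine obstacle: with Theorem \ref{thm:main} and Corollary \ref{coro:diff.ty.in.cone} in hand, this statement is essentially a one-line harvest. The only conceptual point to keep in mind is the identification modulo constants that defines the Gromov boundary, which is precisely what converts the ``equal up to a constant'' output of the main theorem into equality of horofunctions as points of $\calB_h$.
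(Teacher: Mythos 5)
Your argument is correct and follows essentially the same route as the paper: non-emptiness of $\calB_h(a)$, the inclusion $\calB_h(a)\subset\calS_a$, Theorem \ref{thm:main} for uniqueness up to a constant, and Corollary \ref{coro:diff.ty.in.cone} for differentiability. The one point to adjust is how you dispose of the additive constant: in this paper $\calB_h$ is \emph{not} defined as a quotient modulo constants, but as a set of actual functions, each arising as a limit of the normalized potentials $u_{p_n}(x)=\phi_h(x,p_n)-\phi_h(p_n,0)$ and hence satisfying $u(0)=0$; it is this normalization, rather than an identification of functions differing by a constant, that forces $k=0$ and turns ``equal up to a constant'' into genuine equality.
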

\begin{proof}
Since $a\in\Omega_h$, by Remark \ref{rk:inclus-horo-hyp} we have that $\calB_h(a)\neq\emptyset$,
and $\calB_h(a)\subset\calS_a$. Since any horofunction
$u\in\calB_h$ satisfy $u(0)=0$, we deduce from Theorem \ref{thm:main}
that $\calB_h(a)$ is actually a singleton. Let now $\gamma\in\calM_a$ and $b_\gamma$ be the associated Busemann function. 
By Lemma \ref{lem:busemann-vs-hypviscsol} we know that $b_a=b_\gamma-b_\gamma(0)$ is exactly the horofunction directed by $a$, therefore $\calB_h(a)=\{b_a\}$.
\end{proof}

Finally, we also deduce the following corollary,
whose geometric interpretation is that the \emph{cut locus}
of a configuration $a\in\Omega$ (seen as a boundary point at infinity)
has zero measure.

\begin{corollary}[generic uniqueness of the geodesic ray]
\label{coro.generic.unicity}
Given $a\in\Omega$ we have that, for almost all $x\in E^N$
there is a unique hyperbolic geodesic ray $\gamma\in\calM_a$
such that $\gamma(0)=x$.
\end{corollary}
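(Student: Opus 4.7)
The plan is to reduce the non-uniqueness of geodesic rays starting at $x$ to the non-uniqueness of calibrating curves of a single hyperbolic viscosity solution, and then to use almost-everywhere differentiability to conclude. I would begin by fixing a reference solution $u\in\calS_a$, whose existence is guaranteed by \cite{MaVe}, and by recalling the identification of geodesic rays in $\calM_a$ with calibrating curves of a hyperbolic solution: for any $\gamma\in\calM_a$ its Busemann function $b_\gamma$ lies in $\calS_a$ and is calibrated by $\gamma$ on every compact subinterval. Theorem \ref{thm:main} then forces $b_\gamma=u+k$ for some constant $k\in\R$, and since calibration is insensitive to additive constants, every element of $\calM_a$ is in fact a calibrating curve of the single function $u$.

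Next I would invoke the characterization provided by Proposition \ref{prop:diff-unic-calibr} of the Appendix: $u$ is differentiable at a non-collision configuration $x\in\Omega$ if and only if its calibrating curve arriving at $x$ at time $0$ is unique. Since $u$ solves in the viscosity sense a Hamilton-Jacobi equation whose Hamiltonian is smooth on $T^*\Omega$, it is locally semiconcave on $\Omega$ and therefore differentiable at almost every point of $\Omega$; as $\Delta$ has Lebesgue measure zero in $E^N$, this yields a full-measure subset of $E^N$ on which $u$ is differentiable. At any such $x$ the curve $\gamma_x:(-\infty,0]\to E^N$ given by condition (ii) of Definition \ref{def.hyp.visc.sol} is the unique calibrating curve of $u$ reaching $x$, and its time-reversal is, by condition (iii) and the remark on calibrating curves as geodesic rays, an element of $\calM_a$ starting at $x$; this simultaneously gives existence, and, combined with the reduction of the first paragraph, uniqueness of the geodesic ray in $\calM_a$ through $x$.

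The one delicate ingredient is the almost-everywhere differentiability of $u$ on $\Omega$. It rests on the local semiconcavity of viscosity solutions for this class of singular Tonelli-like Lagrangians, a regularity statement that has to be argued with some care in view of the singularity of $U$ on $\Delta$ but which is entirely local in nature: on a small neighborhood of any $x\in\Omega$ disjoint from $\Delta$, $u$ is realized as the infimum of a smooth family of short-time action functionals and inherits semiconcavity from them. Granted this, everything else is a formal combination of Theorem \ref{thm:main}, the inclusion $\calM_a\subset\{\text{calibrating curves of }u\}$, and the appendix proposition relating points of differentiability to uniqueness of the incoming calibrating curve.
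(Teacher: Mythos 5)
Your argument has the same skeleton as the paper's: reduce uniqueness of the geodesic ray through $x$ to uniqueness of the calibrating curve of a single hyperbolic solution arriving at $x$ (via Theorem \ref{thm:main}, or equivalently Corollary \ref{coro.ba.is.busemann}), then apply the characterization of Proposition \ref{prop:diff-unic-calibr} together with almost-everywhere differentiability on $\Omega$. The one place you diverge is the justification of that last ingredient. You invoke local semiconcavity of the viscosity solution, which you rightly flag as the delicate step in view of the singularity of $U$ on $\Delta$; the paper sidesteps this entirely by observing that $b_a$, being dominated by $L+h$, is locally Lipschitz on the open set $\Omega$ (\cite[Lemma 2.5]{MaVe}) and then applying Rademacher's theorem, with $\Delta$ of Lebesgue measure zero. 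So your route is correct but carries an unnecessary burden: semiconcavity is strictly stronger than what the conclusion requires, and if you did want it, the superdifferentiability estimate of Lemma \ref{lem:sur-diff} in the Appendix (valid on a ball around any point of $\Omega$, since every configuration is reached by a calibrating curve by condition (ii) of Definition \ref{def.hyp.visc.sol}) already supplies the uniform quadratic upper bound without any appeal to general Hamilton--Jacobi regularity theory. Everything else --- the identification of $\calM_a$ with calibrating curves of $b_a$, and the existence half of the statement --- matches the paper.
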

\begin{proof}
We use Rademacher's theorem applied to the Busemann function $b_a$
over the open and dense set $\Omega$ of configurations without collisions.
Indeed, we know that $b_a$ (as well as any dominated function)
is differentiable almost everywhere in $\Omega$,
since it is locally Lipschitz (see \cite[Lemma 2.5]{MaVe}).
Actually the local Lipschitz constant diverges as the configurations
approach the set $\Delta=E^N\setminus\Omega$.

Let $x\in\Omega$ be a configuration for which two different
geodesic rays $\gamma$ and $\gamma'$ start, both with configuration
$a$ as limit shape. Let us call $\sigma$ and $\sigma'$ the curves
obtained by reversing the time.
That is, $\sigma:(-\infty,0]\to\Omega$ is defined by $\sigma(t)=\gamma(-t)$,
and $\sigma'$ is defined similarly. By Lemma \ref{lem:busemann-vs-hypviscsol} we know that 
$\sigma$ is calibrating for the Busemann function defined by $\gamma$, namely
de function $b_\gamma$. By the same reason, the curve $\sigma'$
is calibrating for the Busemann function $b_{\gamma'}$.
By Theorem \ref{theorem:1.uniqueness.Busemann} and by definition of $b_a$ have that $b_a=b_\gamma-b_\gamma(0)=b_{\gamma^\prime}-b_{\gamma^\prime}(0)$, therefore 
both $\sigma$ and $\sigma'$ are calibrating for the function $b_a$.
By Proposition \ref{prop:diff-unic-calibr} we conclude that $b_a$ is not
differentiable at $x$, and we know that the set of these points has zero
measure in $\Omega$ by Rademacher's theorem.
\end{proof}

\begin{remark}
If we drop the minimizing assumption, and we only ask to $\gamma$
to be a hyperbolic motion with limit shape $a$,
the previous corollary is no more true.
This already occurs in the planar Kepler problem. 
Indeed, as we point out in Remark \ref{rem:kepler},
excepted for the case in which the initial position $x$
is on the half line starting from the origin and directed by $a$,
there are exactly two hyperbolic motions starting from $x$ and with limit shape $a$.
\end{remark}

\section{The limit shape map}

We recall that, for any hyperbolic motion such that
$x(t)=ta+o(t)$ as $t\to +\infty$, we have 
$a=\lim_{+\infty}(x(t)/t)=\lim_{+\infty}\dot x(t)$.
That is, the limit shape is precisely the asymptotic velocity.
The second identity is a consequence of Chazy's estimate
that we have already mentioned in Remark  \ref{remark:Chazy}. 

The equation of motion of the $N$-body problem defines a local flow on $\Omega\times E^N$, here denoted 
\[
\varphi : \calD\to \Omega\times E^N,\quad (x_0,v_0,t)\mapsto \varphi^t(x_0,v_0),
\]
where $\calD$ is an open subset of $\Omega\times E^N\times \R$.
Let us recall quickly how $\calD$ and $\varphi$ are  defined.
Equation of motion can be written as the following first order differential equation on $\Omega\times E^N$ 
\begin{equation}
\label{eqn-mouv-1er-ordre}
\left\{
\begin{array}{rcl}
\dot{x}&=&v \\
\dot{v}&=&\nabla U(x).
\end{array}
\right.
\end{equation}
Given $z_0=(x_0,v_0)\in \Omega \times E^N$, denoting $(x,v): I_{z_0} \to \Omega\times E^N$ the maximal solution of (\ref{eqn-mouv-1er-ordre}) with initial condition
$x(0)=x_0$ and $v(0)=v_0$, we define 
\[
\calD=\bigcup_{z_0\in \Omega\times E^N} \{z_0\}\times I_{z_0}
\]
and if $t\in I_{z_0}$ then we set $\varphi^t(x_0,v_0)=(x(t),v(t))$. We recall that if $(z_0,t)\in\calD$ and $(\varphi^t(z_0),s)\in\calD$ then $(z_0,t+s)\in\calD$ and moreover $\varphi^{t+s}(z_0)=\varphi^s(\varphi^t(z_0))$. 

\begin{definition}
[Hyperbolic initial conditions]
Let $\calH\subset T\Omega$ be
the set of initial conditions $(x,v)$ such that
\begin{enumerate}
    \item[(1)] The evolution $(x(t),\dot x(t)) = \varphi^t(x,v)$
    is defined for all $t\geq 0$, and
    \item[(2)] The motion $x(t)$ is hyperbolic as $t\to +\infty$.
\end{enumerate}
\end{definition}

\begin{definition}
[Limit shape map]
The limit shape map is the function
\[
\sm :\calH\to\Omega,\;\;\;
\sm(x,v)=\lim_{t\to +\infty}\,\frac{\pi(\varphi^t(x,v))}{t}
=\lim_{t\to +\infty}\,\frac{x(t)}{t}
\]
where $\pi:T\Omega\to\Omega$ is the tangent bundle map,
that is, the projection $(x,v)\mapsto x$.
\end{definition}

The fundamental result regarding this function is what we called
Chazy's lemma, which asserts that its domain of definition $\calH$
is an open subset of $T\Omega$, and $\sm$ is a continuous map. 
For more clarity, we recall here the statement of Chazy's Lemma. 
\begin{lemma}[Lemma 4.1 in \cite{MaVe}]
\label{lem:lema-cont.limitshape}
Let $U:E^N\to\R\cup\set{+\infty}$ be an homogeneous potential
of degree $-1$ of class $C^2$ on the open set
$\Omega=\set{x\in E^N\mid U(x)<+\infty}$.
Let $x:[0,+\infty)\to\Omega$ be a given solution of
$\ddot x=\nabla U(x)$ satisfying $x(t)=ta+o(t)$
as $t\to +\infty$ with $a\in\Omega$.

Then we have:
\begin{enumerate}
\item[(1)] The solution $x$ has asymptotic velocity $a$,
meaning that
\[
\lim_{t\to+\infty}\dot x(t)=a\,.
\]
\item[(2)] (Chazy's continuity of the limit shape)
Given $\epsilon>0$,
there are constants $t_1>0$ and $\delta>0$ such that,
for any maximal solution $y:[0,T)\to\Omega$ satisfying
$\norm{y(0)-x(0)}<\delta$ and
$\norm{\dot y(0)-\dot x(0)}<\delta$,
we have:
\begin{enumerate}
\item[(i)] $T=+\infty$, $\norm{y(t)-ta}<t\epsilon$ for all $t>t_1$,
and moreover
\item[(ii)] there is $b\in\Omega$ with $\norm{b-a}<\epsilon$ 
for which $y(t)=tb+o(t)$.
\end{enumerate}
\end{enumerate}
\end{lemma}

In this section we go further in understanding
the regularity properties of the limit shape map, and 
we will show that $\sm$ is of class $\calC^1$.
Moreover, we will need an integral expression
for the differential $d\sm$ in terms of Jacobi fields,
which is given in Proposition \ref{differ-fig-limite} below.
We will make use of Jacobi fields along our motions,
and this is why we recall them now.

\subsection{The Jacobi equation}
\label{ss.Jacobi.equ}

Given an interval $I\subset\R$ and a motion $\gamma: I\to\Omega$,
a \emph{Jacobi field} along $\gamma$ is a vector field $J:I\rightarrow E^N$
which satisfy the \emph{Jacobi equation}
\[
\ddot{J}=D^2U(\gamma)\,J
\]
where here $D^2U(x):E^N\rightarrow E^N$ denotes the Hessian of $U$
at a configuration $x\in\Omega$ with respect to the mass inner product,
that is to say, the unique endomorphism
$\xi\mapsto D^2U(x)\,\xi$ for which
\[
d_x^{\,2}\,U(\xi,\eta)=\inner{D^2U(x)\,\xi}{\eta}
\]
holds for any $\xi$ and $\eta$ in $E^N$.

\begin{remark}
The gradient is defined by the equation
$d_xU(\eta)=\inner{\nabla U(x)}{\eta}$
for all $\eta\in E^N$. Therefore,
seeing mass inner product as a constant
Riemannian metric we have that
$D^2U(x)\,\xi=\nabla_\xi\nabla U(x)$ for any $\xi\in E^N$.
\end{remark}
\begin{remark}
If we write Newton's equation as the first order differential equation
(\ref{eqn-mouv-1er-ordre})
in the phase space, then its linearized form along a given solution $(\gamma,\dot\gamma)$
writes $\dot J=V$ and $\dot V=\nabla_J\nabla U(\gamma)=D^2U(\gamma)J$.
Thus, the first component $J$ of the solutions of this system are precisely
the Jacobi fields along the given motion $\gamma$.
\end{remark}

However, we recall that the natural way to obtain the Jacobi fields
is seeing them as variational fields.
Let $\gamma:I\to \Omega$ be any motion,
and fix an initial time $t_0\in I$.
Let us denote $x_0=\gamma(t_0)$ and $v_0=\dot\gamma(t_0)$.
Then, for any  $(X,V)\in E^N \times E^N$ we can produce
a variation of $\gamma$ by setting
\[
F(t,s)=\pi(\varphi^{t-t_0}(x_0+sX,v_0+sV))
\]
for the values of t and s at which the expression is well defined.
Then
\[
J(t)=\frac{\partial F}{\partial s}(t,0)
=\pi(d_{(x_0,v_0)}\varphi^{t-t_0}(X,V))
\]
is the solution to the Jacobi equation along $\gamma$
satisfying $J(t_0)=X$ and $\dot J(t_0)=V$.

\subsection{A lemma on the growth of Jacobi fields}
\label{ss:growth-jacobi}
 
Here we prove a preliminary result on the growth of families
of Jacobi fields along hyperbolic motions.

We start with a lemma on linear second order differential equations
on the real line.
Although we are convinced that this elementary lemma
should be familiar to experts in linear differential equations
whose coefficients tend to constants, we have not found any reference
in the literature that expresses it.
For this reason we give here the proof that we have found.
 
\begin{lemma} \label{croissance_eq_lin}
Let $x:[1,+\infty)\to \R^+$ be a positive function such that
\[
\ddot x(t)=\,k\,{t^{-(2+\alpha)}}\,x(t)
\]
for all $t\geq 1$, where $\alpha$ and $k$ are positive constants.
Then, there is $c>0$ such that
\[
x(t)\leq c\,t
\]
for all $t\geq 1$.
\end{lemma}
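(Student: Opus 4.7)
The plan is to show that the ratio $y(t) = x(t)/t$ stays bounded as $t \to +\infty$, which immediately yields the desired linear bound on $x$. The positivity hypothesis on $x$ (hence on $y$) will be essential, since it lets us discard a negative term when setting up the integral inequality.

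First I would introduce the auxiliary function $w(t) = t\,\dot x(t) - x(t)$, which is precisely $t^2 \dot y(t)$. A direct computation gives
\[
\dot w(t) = t\,\ddot x(t) = k\,t^{-(1+\alpha)} x(t) = k\,t^{-\alpha}\, y(t),
\]
so after integration from $1$ to $t$,
\[
w(t) = w(1) + k \int_1^t s^{-\alpha}\, y(s)\, ds.
\]
This identity is the heart of the argument: it shows why the exponent $2+\alpha$ in the original ODE is exactly the right one, since one integration leaves a kernel $s^{-\alpha}$ that combines well with the factor $\tau^{-2}$ coming from the relation $\dot y = w/\tau^2$.

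Next I would integrate $\dot y(\tau) = w(\tau)/\tau^2$ from $1$ to $t$ and swap the order of integration by Fubini's theorem:
\[
y(t) = y(1) + w(1)\Bigl(1 - \tfrac{1}{t}\Bigr) + k \int_1^t s^{-\alpha}\, y(s)\, \Bigl(\tfrac{1}{s} - \tfrac{1}{t}\Bigr) ds.
\]
Since $y > 0$ and $1/s - 1/t \leq 1/s$, the last integral is bounded above by $\int_1^t s^{-(1+\alpha)} y(s)\,ds$; setting $C_0 = y(1) + |w(1)|$, we obtain the Gronwall-type inequality
\[
y(t) \leq C_0 + k \int_1^t s^{-(1+\alpha)}\, y(s)\, ds.
\]

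Finally, since $\int_1^{+\infty} s^{-(1+\alpha)}\,ds = 1/\alpha$ is finite, Gronwall's lemma yields $y(t) \leq C_0\, e^{k/\alpha}$, so that $x(t) \leq c\, t$ with $c = C_0\, e^{k/\alpha}$. I do not foresee any real obstacle beyond spotting the substitution $y = x/t$ together with the auxiliary quantity $w$; once the integral inequality is in hand, the conclusion is a routine application of Gronwall.
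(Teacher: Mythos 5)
Your proof is correct, and it takes a genuinely different route from the one in the paper. The paper works with a barrier function $\varphi(t)=A+B\,t^{1+\delta}$ for some $0<\delta<\alpha$: it shows by a continuity/contradiction argument on the derivatives that $x\leq\varphi$ for suitable $A,B$, which gives the intermediate bound $x(t)\leq B't^{1+\delta}$, and then bootstraps once more — feeding this back into the equation makes $\ddot x$ integrable, so $\dot x$ is bounded and $x$ grows at most linearly. You instead pass to $y=x/t$, integrate the equation twice (using the exact antiderivative of $\tau^{-2}$ and Fubini on the resulting double integral), discard the nonpositive contribution $-1/t$ thanks to the positivity of $y$, and land on the integral inequality $y(t)\leq C_0+k\int_1^t s^{-(1+\alpha)}y(s)\,ds$, which Gronwall closes since $s^{-(1+\alpha)}$ is integrable at infinity. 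All the steps check out: $w=t^2\dot y$, $\dot w=kt^{-\alpha}y$, the Fubini computation $\int_s^t\tau^{-2}d\tau=1/s-1/t$, and the application of Gronwall to the continuous nonnegative function $y$ are all valid. Your argument is arguably cleaner and more quantitative — it produces the explicit constant $c=(x(1)+|\dot x(1)-x(1)|)e^{k/\alpha}$ in one pass and makes transparent why the exponent $2+\alpha$ is critical (the kernel $s^{-(1+\alpha)}$ must be integrable, which is exactly where the lemma fails for $\alpha=0$, consistent with the paper's remark). What the paper's comparison argument buys is independence from the Gronwall machinery and a template that adapts to the vector-valued comparison used in the subsequent lemma on Jacobi fields.
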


\begin{remark}
Note that the lemma is false if we allow the constant $\alpha$ to be zero.
\end{remark}

\begin{proof}
Let us fix $0 <\delta<\alpha$,
and define an auxiliary function on the same interval by
\[
\varphi(t)=B\,t^{1+\delta}.
\]
We will prove that for some choice of $B>0$
we have $x(t)\leq\varphi(t)$ for all $t\geq 1$.
This is enough to complete the proof, since by integrating both members in the inequality
\[
\ddot{x}(t)=kt^{-(2+\alpha)}x(t)\leq kBt^{-(1+\alpha-\delta)}
\]
we get the following uniform bound 
\[
\dot{x}(t)\leq \dot{x}(1)+\frac{k B}{\alpha-\delta}, \qquad t\geq 1,
\]
therefore $x(t)\leq ct$ for some  $c>0$ which is constant with respect to $t$.
Let us prove now that $x(t)\leq \varphi(t)$ for all $t\geq 1$.
If $t\geq 1$ is such that $x(s)\leq\varphi(s)$ for any $s\in [1,t]$
then we can write
\[
\dot x (t)\leq \dot x_1 +\;
\int_1^t \,k\,{s^{-(2+\alpha)}}\varphi(s)\,ds
\]
where $\dot x_1=\dot x(1)$, and as before we have
\[\dot x(t)\leq\,
\dot x_1 +\,\int_1^{+\infty}\,
\frac{kB}{s^{1+\alpha-\delta}}\,ds 
= \dot x_1 + \frac{kB}{\alpha-\delta}\,.
\]

We want now to compare the derivative of our function $x$
with the derivative of the auxiliary function $\varphi$.
By the previous estimate,
assuming again that $x(s)\leq\varphi(s)$ for any $s\in [1,t]$,
we know that a sufficient condition to have
$\dot x(t)< \dot\varphi(t)$
is given by the inequality
\begin{equation}\label{condition-lemma-nov}
\dot x_1 <
B\left(\,(1+\delta)\,t^\delta-\frac{k}{\alpha-\delta}\,\right).
\end{equation}
Then the argument can be concluded as follows.
We take $T>1$ large enough such that
\[
(1+\delta)\,T^\delta>\frac{k}{\alpha-\delta}
\]
and choose $B>0$ big enough
so that condition (\ref{condition-lemma-nov})
is verified for all $t\geq T$ and moreover $x(t)\leq \varphi(t)= B t^{1+\delta}$ for all $t\in [1,T]$.
We claim that for these choices of $T$ and $B$,
we have $x(t)\leq\varphi(t)$ for all $t\geq 1$.
Otherwise there should be some $t>T$ such that
$x(s)\leq\varphi(s)$ for all $s\in[1,t]$, with equality for $s=t$, and 
moreover $\dot x(t)<\dot\varphi(t)$, which is not possible.
\end{proof}

Now, we use this lemma to prove a uniform upper bound
for a family of Jacobi fields along hyperbolic motions.

\begin{lemma}
\label{croiss-jacobi}
Let $\calF=\set{\gamma_s\mid s\in A}$
be a family of motions of the $N$-body problem, all defined for $t\geq 0$,
and assume there is a compact set $K\subset\Omega$, and a constant $b>0$
such that for any $s\in A$ and any $t\geq 0$ we have:
\begin{enumerate}
\item[(i)]
$u_s(t)=\gamma_s(t)/\norm{\gamma_s(t)}\in K$, and
\item[(ii)]
$\norm{\gamma_s(t)}\geq b\,(t+1)$.
\end{enumerate}
Then, there is a constant $c>0$ such that 
\[
\norm{J(t)}\leq c\,(t+1)
\]
for all $t\geq 0$, where $J$ is any Jacobi field along
any $\gamma_s$ such that $J(0)$ and $\dot J(0)$ are
both in the unit ball of $E^N$.
\end{lemma}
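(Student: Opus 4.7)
The plan is to reduce the vector-valued Jacobi equation $\ddot J = HU(\gamma_s)\,J$ to an integral equation built around the trivial equation $\ddot J = 0$, whose solutions grow exactly linearly. Once that is done, the cubic decay of the coefficient $HU(\gamma_s(t))$ along a hyperbolic orbit, together with a Grönwall argument that mirrors the scalar Lemma \ref{croissance_eq_lin}, yields the sought linear bound uniformly in $s\in A$ and in the Jacobi field.

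First I would bound $\|HU(\gamma_s(t))\|_{\mathrm{op}}$ using that $U$ is homogeneous of degree $-1$ on $\Omega$, hence $HU$ is homogeneous of degree $-3$. Writing $\gamma_s(t) = \|\gamma_s(t)\|\,u_s(t)$ with $u_s(t)\in K$, we have $HU(\gamma_s(t)) = \|\gamma_s(t)\|^{-3}\,HU(u_s(t))$. The compactness of $K\subset\Omega$ provides some $M>0$ with $\|HU(u)\|_{\mathrm{op}}\leq M$ for all $u\in K$, and combining this with hypothesis (ii) gives
\[
\|HU(\gamma_s(t))\|_{\mathrm{op}}\;\leq\;\frac{M}{b^3(t+1)^3}\;=:\;\frac{k}{(t+1)^3}
\]
uniformly in $s$ and in $t\geq 0$. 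This is the exact analogue of the coefficient in Lemma \ref{croissance_eq_lin} with $\alpha = 1$. Next, since the unperturbed equation $\ddot X = 0$ has fundamental system $\{I,\,t\,I\}$, variation of constants applied to the Jacobi equation yields the integral representation
\[
J(t)\;=\;J(0)+t\,\dot J(0)+\int_0^t (t-s)\,HU(\gamma_s(s))\,J(s)\,ds.
\]

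Taking norms, using $\|J(0)\|,\|\dot J(0)\|\leq 1$ together with the bound from the previous step, one obtains $\|J(t)\|\leq 1+t+k\int_0^t \frac{t-s}{(s+1)^3}\|J(s)\|\,ds$. Setting $\rho(t) = \|J(t)\|/(t+1)$ and using $(t-s)/(t+1)\leq 1$, this becomes
\[
\rho(t)\;\leq\;1+k\int_0^t \frac{\rho(s)}{(s+1)^2}\,ds,
\]
and since $\int_0^\infty (s+1)^{-2}\,ds = 1$, Grönwall's inequality yields $\rho(t)\leq e^k$, hence $\|J(t)\|\leq c\,(t+1)$ with $c = e^k$ depending only on $K$ and $b$, which is what we need.

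The main obstacle is that a direct energy-type Grönwall estimate carried out on $\|J\|$ or on $\|J\|^2+\|\dot J\|^2$ only gives an exponential bound, because the unperturbed flow $\ddot J = 0$ is not asymptotically stable; linear growth can be captured only by exploiting the $(t-s)$ factor in the Green function of $\ddot J = 0$. It is precisely the cubic decay $(t+1)^{-3}$ of $HU(\gamma_s(t))$ — which reflects the homogeneity of $U$ combined with the lower bound $\|\gamma_s(t)\|\geq b(t+1)$ — that, after absorbing the factor $(t-s)/(t+1)\leq 1$, leaves the integrable tail $(s+1)^{-2}$ required to close the Grönwall estimate. This is the vectorial counterpart of the critical exponent $\alpha>0$ in Lemma \ref{croissance_eq_lin}: the argument would break down if $U$ were only homogeneous of degree $0$, which is precisely the border case excluded in that lemma's remark.
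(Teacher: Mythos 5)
Your proof is correct, and its second half takes a genuinely different route from the paper. The first step is identical: both arguments use the degree $-3$ homogeneity of $HU$ together with hypotheses (i) and (ii) to get the uniform bound $\norm{HU(\gamma_s(t))}\leq k(t+1)^{-3}$ with $k$ depending only on $K$ and $b$. From there the paper proceeds by a comparison principle: it shows that $\norm{J(t)}$ is dominated by the solution $y(t)$ of the scalar equation $\ddot y=k(t+1)^{-3}y$ with $y(0)=\dot y(0)=1$ (via a connectedness argument on the set where $\snorm{\dot J}<\dot y$, plus a limiting step to handle the borderline initial condition), and then invokes the separate barrier-function Lemma \ref{croissance_eq_lin} to show that $y$ itself grows at most linearly. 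You instead write the Duhamel representation $J(t)=J(0)+t\dot J(0)+\int_0^t(t-\sigma)HU(\gamma_s(\sigma))J(\sigma)\,d\sigma$, normalize by $\rho(t)=\norm{J(t)}/(t+1)$, absorb the kernel via $(t-\sigma)/(t+1)\leq 1$, and close with Gr\"onwall using $\int_0^\infty(\sigma+1)^{-2}d\sigma<\infty$; this bypasses both the vector-to-scalar comparison and Lemma \ref{croissance_eq_lin} entirely, and your closing remark correctly identifies why the $(t-\sigma)$ factor (rather than a naive estimate on $\norm{J}+\snorm{\dot J}$) is what makes the linear bound accessible. Your argument is more self-contained and gives an explicit constant $c=e^{k}$; the paper's route isolates the scalar growth lemma as a reusable statement for general decay exponent $\alpha>0$, whereas yours is specialized to the integrable case at hand. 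The only blemish is notational: you use $s$ both as the family index and as the integration variable, which should be disambiguated.
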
 

\begin{proof}
Let us start by defining the constant,
En reponse à la remarque {\it Further comments about presentation, 18} du referee. C'est à propos de la notation pour la Hessienne de $U$.
\begin{equation} \label{def-const-k}
k=b^{-3}\max\set{\,\norm{D^2U(y)}\;\mid y\in K},
\end{equation}
and observing that, since the Hessian operator $D^2U$
is homogeneous of degree $-3$, it follows from (i) and (ii) that 
\begin{equation} \label{borne-HU-hom}
\norm{D^2U(\gamma_s(t))}=
\norm{D^2U(u_s(t))}\,\norm{\gamma_s(t)}^{-3}\leq k\,(t+1)^{-3},
\end{equation}
for any $s\in A$ and $t\geq 0$.
We now make a claim that will allow us to compare
the norm of a Jacobi field with the size of a solution
of a one-dimensional equation.

\vspace{.2cm}
\noindent
\emph{Claim.}
if $J$ is a Jacobi field along
some $\gamma_s\in\calF$,
and $y:[0,+\infty)\to \R^+$ is a solution of the
differential equation
\begin{equation} \label{eq-jacobi-scalar}
\ddot y = k\,(t+1)^{-3}\,y,
\end{equation}
such that $\norm{J(0)}\leq y(0)$ and
$\snorm{\dot J(0)}\leq \dot y(0)$,
then $\norm{J(t)}\leq y(t)$ for all $t\geq 0$.
\vspace{.2cm}

Let us first prove the claim assuming that the second inequality is strict,
that is to say, that we have $\snorm{\dot{J}(0)}< \dot y(0)$.
In that case we have that the set
\[
\calT=
\set{t>0 \mid \snorm{\dot J (u)}< \dot y(u) \textrm{ for all } u\in [0,t]}
\]
is an open non-trivial interval. Moreover, since $\norm{J(0)}\leq y(0)$,
for any $t\in \calT$ it holds $\norm{J(t)}< y(t)$. 
Hence, thanks to (\ref{borne-HU-hom}) for $t\in\calT$ we have: 
\[
\snorm{\ddot J (t)}=\norm{D^2U(\gamma(t))}\,\norm{\,J(t)}\,<\, k\,(t+1)^{-3}\,y(t)=\ddot y(t).
\]

The last inequality shows that the function
$t\mapsto\dot y(t) - \snorm{\dot J(t)}$ is strictly increasing in $\calT$.
Indeed, given $t_0,t_1\in\calT$, with $t_0<t_1$, by the triangular inequality we have 
\begin{eqnarray*}
    \dot{y}(t_1)-\norm{\dot{J}(t_1)}-\dot{y}(t_0)+\norm{\dot{J}(t_0)}&\geq &\dot{y}(t_1)-\dot{y}(t_0)-\norm{\dot{J}(t_1)-\dot{J}(t_0)} \\
    &\geq &\displaystyle\int_{t_0}^{t_1} (\ddot{y}(s)-\norm{\ddot{J}(s)})\, ds >0.
\end{eqnarray*}
This monotonicity
in turn implies that $\calT=\R^+$, and the claim is true in this case.

Assume now that we have $\norm{J(0)}\leq y(0)$ and $\snorm{\dot J(0)}=\dot y(0)$. 
We can certainly define a sequence of solutions $(y_n)_{n\in\N}$
of the linear differential equation (\ref{eq-jacobi-scalar}),
by imposing the initial conditions
$y_n(0)=y(0)$ and $\dot y_n(0)=\dot y(0)+ 1/(1+n)$ for all $n\in\N$.
Since equation (\ref{eq-jacobi-scalar}) is linear, each $y_n(t)$ is well defined for all $t\geq 0$ and $y_n$ is a convex function on every 
interval where it is positive.   
In addition $\dot{y}_n(0)>0$, thus $y_n(t)$ is always positive for $t\geq 0$.

By the previous argument we know that
$\norm{J(t)}< y_n(t)$ for all $t\geq 0$ and any $n\in\N$.
Since the sequence $y_n(t)$ uniformly converges to $y(t)$ on every compact
subset of $\R^+$, we can say that $\norm{J(t)}\leq y(t)$ for any $t\geq 0$.
Therefore the claim is true in any case.
\vspace{.2cm}

From now on, let $y:[0,+\infty)\to\R$
be the solution of (\ref{eq-jacobi-scalar}) determined
by the initial conditions $y(0)=\dot y(0)=1$, 
which is again positive and well defined in $[0,+\infty)$ for the same reason as before.
Let now $J$ be any Jacobi field along any motion
$\gamma_s\in\calF$, and suppose that  $J(0)$ and $\dot J(0)$ are both
in the unit ball of $E^N$.
Then it follows, from what we have just proved,
that $\norm{J(t)}\leq y(t)$ for all $t\geq 0$. 
Let us set now $x(t)=y(t-1)$, for all $t\geq 1$,
thus $x(t)$ is a solution of $\ddot x=k\,t^{-3}\,x$.
By Lemma \ref{croissance_eq_lin} we know that there is
a constant $c>0$ such that $x(t)\leq ct$ for all $t\ge 1$,
thus $\norm{J(t)}\leq y(t)=x(t+1)\leq c(t+1)$ for all $t\geq 0$.
\end{proof}

\begin{remark}
\label{rem:Jacobi-linear-homog}
Let us endow $E^N\times E^N$ with the norm defined by $\norm{(X,Y)}=\max(\norm{X},\norm{Y})$. 
By linearity of the Jacobi equation,  given a nonzero Jacobi  field $J : [0,+\infty)\to E^N$ along the motion $\gamma_s\in\calF$ with initial condition $(J(0),\dot{J}(0))=Z$, 
we know that $\tilde{J}(t)=\norm{Z}^{-1}J(t)$ is still a Jacobi field, and moreover, $\tilde{J}(0)$ and $\dot{\tilde{J}}(0)$  are both in the unit ball of $E^N$, therefore by 
the previous Lemma we find that $\norm{J(t)}\leq c\norm{Z}(t+1)$, where the constant $c>0$ is independent on the Jacobi field $J$ and on the motion $\gamma_s$.
\end{remark}

The preceding lemma provides a uniform bound for the growth
of Jacobi fields over motions of the $N$-body problem satisfying certain hypotheses.
It is complemented by the following one, which ensures that
these hypotheses are satisfied by hyperbolic motions
whose initial conditions are in a compact set.

\begin{lemma} \label{compacite-fig-lim}
Let $\calF$ be a family of hyperbolic motions defined for $t\geq 0$.
If the set of initial conditions
\[
\calB=\{(\gamma(0),\dot{\gamma}(0))\;\mid\; \gamma\in \calF \}
\]
is compact in $\Omega$, then the hypothesis of Lemma \ref{croiss-jacobi} holds,
that is, there is a compact set $K\subset\Omega$, and a constant $b>0$
such that for any $\gamma\in \calF$ and any $t\geq 0$ we have
\begin{enumerate}
\item[(i)]
$\gamma(t)/\norm{\gamma(t)}\in K$, and
\item[(ii)]
$\norm{\gamma(t)}\geq b(t+1)$.
\end{enumerate}   
\end{lemma}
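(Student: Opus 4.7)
The plan is to split the time interval into a bounded portion $[0,T_1]$, handled by continuous dependence of the flow and compactness of $\calB$, and a tail $[T_1,+\infty)$, handled by a uniform asymptotic estimate derived from Chazy's expansion.

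First I would note that since every element of $\calF$ is a hyperbolic motion, $\calB\subset\calH$, and by Chazy's lemma the limit shape map $\sm:\calH\to\Omega$ is continuous. Hence $\sm(\calB)$ is a compact subset of $\Omega$, so there exist constants $m_0>0$ and $\delta_0>0$ with $\norm{\sm(x,v)}\geq m_0$ and $d(\sm(x,v),\Delta)\geq\delta_0$ for every $(x,v)\in\calB$, and in particular the set of unit directions $\set{a/\norm{a}\mid a\in\sm(\calB)}$ is a compact subset of $\Omega$.

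Second, I would establish that $\gamma(t)/t\to\sm(\gamma(0),\dot\gamma(0))$ as $t\to+\infty$ uniformly over $\gamma\in\calF$. For this I invoke Chazy's expansion recalled in the introductory remark: for each $\gamma\in\calF$ with limit shape $a=\sm(\gamma(0),\dot\gamma(0))$, one has $\gamma(t)=ta-\log(t)\,\nabla U(a)+r_\gamma(1/t,\log(t)/t)$ for $t$ large, with $r_\gamma$ analytic in a neighborhood of $(0,0)$. The analytic data defining $r_\gamma$ depend continuously on the initial condition, since Chazy's expansion is obtained by an implicit function theorem argument on a suitable extension of the flow at infinity. Because $\calB$ is compact, the radii of convergence can be taken uniformly bounded below and the near-origin bounds on $r_\gamma$ uniform. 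This yields constants $C>0$ and $T_0\geq 1$ such that $\norm{\gamma(t)-ta}\leq C\log t$ for all $\gamma\in\calF$ and $t\geq T_0$.

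Third, I conclude as follows. Pick $T_1\geq\max(T_0,1)$ with $C\log T_1\leq (m_0/2)\,T_1$. Then for $t\geq T_1$ one has $\norm{\gamma(t)}\geq m_0 t-C\log t\geq (m_0/2)\,t\geq (m_0/4)(t+1)$, which gives (ii) on this range, while $\gamma(t)/\norm{\gamma(t)}$ lies eventually in any prescribed neighborhood of $\set{a/\norm{a}\mid a\in\sm(\calB)}$, which can be chosen to be a compact subset of $\Omega$ using $\delta_0>0$. For $t\in[0,T_1]$, continuous dependence of the flow on initial conditions together with compactness of $\calB$ implies that $\set{\gamma(t)\mid t\in[0,T_1],\,\gamma\in\calF}$ is a compact subset of $\Omega$, which yields $\norm{\gamma(t)}\geq c>0$ there and hence $\norm{\gamma(t)}\geq (c/(T_1+1))(t+1)$, and also confines the unit direction to a compact of $\Omega$. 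Taking $b=\min(m_0/4,\,c/(T_1+1))$ and choosing $K$ to be the union (slightly enlarged) of the two compacts of unit directions obtained in the two ranges gives both (i) and (ii). The main obstacle is step two: establishing that Chazy's analytic remainder depends continuously on the initial data, so that the $O(\log t)$ error is uniform on $\calB$. Beyond that point, everything reduces to continuity of $\sm$ and standard continuous dependence of ODE flows on initial conditions.
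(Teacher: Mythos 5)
Your overall architecture (split $[0,+\infty)$ into a compact time interval handled by continuity of the flow, plus a tail handled by a uniform version of the asymptotic $\gamma(t)\sim ta$) is exactly the paper's, and your treatment of the bounded part and the final assembly of the constants is fine. The problem is your second step, which you yourself flag as the main obstacle: you assert that Chazy's expansion $\gamma(t)=ta-\log(t)\nabla U(a)+r_\gamma(1/t,\log t/t)$ holds with remainder data depending continuously on the initial condition, with radii of convergence uniformly bounded below over the compact set $\calB$, so as to get a uniform bound $\norm{\gamma(t)-ta}\leq C\log t$. As written this is a genuine gap: the uniformity of Chazy's analytic expansion over a compact set of initial conditions is a substantial statement (of the kind established in \cite{DMMY} via a stable-manifold construction at infinity), and the one-sentence justification by ``an implicit function theorem argument on a suitable extension of the flow at infinity'' does not prove it. Nothing earlier in this paper gives you that estimate.

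The gap is, however, easily repaired, because your argument needs far less than an $O(\log t)$ rate: all you use is that $\gamma(t)/t\to\sm(\gamma(0),\dot\gamma(0))$ \emph{locally uniformly} in the initial condition, with no rate at all. That locally uniform convergence is precisely the content of Lemma~4.1 in \cite{MaVe} (what the paper calls Chazy's lemma), and it is exactly what the paper's proof invokes: for each $z\in\calB$ it produces $\delta(z),\tau(z)>0$ with $\norm{\pi\circ\varphi^t(w)}\geq t\norm{\sm(z)}/2$ for $w\in B(z,\delta)$ and $t\geq\tau$, and then extracts a finite subcover of $\calB$ to get the uniform linear lower bound; for item (i) it argues by sequences, again reducing the unbounded-time case to Lemma~4.1. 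If you replace your uniform Chazy expansion by that locally uniform convergence statement, your proof closes and coincides in substance with the paper's. (Mere continuity of $\sm$, which is all your first paragraph extracts, is not enough by itself; the uniformity in $t$ is the essential extra input.)
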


 \begin{remark}
In assumption (i) we are essentially asking that all the normalized configurations stay sufficiently far away from collisions.
\end{remark}

\begin{proof} 
Let us prove (i). The set 
\[
K_0=\set{\gamma(t)/\norm{\gamma(t)}\;\mid\;
\gamma\in\calF,\; t\geq 0}
\]
is trivially bounded, thus the closure $K$ of $K_0$
is a compact subset of $E^N$. To check that (i) holds
we only have to show that $K\subset\Omega$.

In order to show this, let $y\in K$,
a sequence of motions $\gamma_n\in \calF$
and a sequence of times $t_n\in[0,+\infty)$ such that 
\[
y=\lim\limits_{n\to +\infty}
\gamma_n(t_n)/\norm{\gamma_n(t_n)}.
\]
Let us denote $z_n=(\gamma_n(0),\dot{\gamma}_n(0))$,
thus $z_n$ is a sequence in $\calB$  and for each $t\geq 0$
it holds $\gamma_n(t)=\pi\circ\varphi^t(z_n)$.   
Without loss of generality, we can assume that $z_n$ converges
to some configuration $z\in\calB$.
Let us set $\gamma(t)=\pi\circ\varphi^t(z)$.

We now discuss whether the sequence $(t_n)$ is bounded or not.
If $(t_n)$ is bounded, up to extracting
we can assume that $t_n$ converges to some $t^*\geq 0$.
In this case we have $y=\gamma(t^*)/\norm{\gamma(t^*)}$,
hence $y\in\Omega$ because $\calB\subset\calH$.
If $(t_n)$ is unbounded, up to extracting
we can assume that $t_n\to +\infty$ as $n\to +\infty$.  
In this case, let us denote
\[
a=\sm(z)=\lim\limits_{t\to +\infty}
\frac{\gamma(t)}{t}
\]
By item (2) of Lemma \ref{lem:lema-cont.limitshape} (Lemma 4.1 in \cite{MaVe}), for each $\epsilon>0$, there is $\tau>0$ such that for all $n$ big enough and for all $t\geq \tau$ it holds
\[
\norm{\gamma_n(t)-ta}\leq \epsilon t,
\]
which implies
\[
a=\lim\limits_{n\to +\infty}\sm(z_n)=
\lim\limits_{n\to +\infty}\frac{\gamma_n(t_n)}{t_n}.
\]
Since $\norm{a}=\lim_{n\to +\infty}\norm{\gamma_n(t_n)}/t_n>0$, we conclude that
\[
y=\lim\limits_{n\to +\infty}
\;\gamma_n(t_n)/\norm{\gamma_n(t_n)}=\,a/\norm{a}\in\Omega.
\]
Let us prove now that (ii) holds. If $z\in \calB$,
applying again Lemma \ref{lem:lema-cont.limitshape}  with
$\epsilon=\norm{\sm(z)}/2$,
we can find $\delta=\delta(z)>0$ and
$\tau=\tau(z)>0$ such that for any
$w\in B(z,\delta)$ and $t\ge \tau$ it holds
\[
\norm{\pi\circ\varphi^t(w)}\geq \;t\,\norm{\mathfrak a(z)}/2.
\]
From now on, if $w\in\calB$,
we write $x_w(t)$ to denote the motion
given by $\pi\circ\varphi^t(w)$.
By compactness of $\calB$, we find a finite open cover ${U_1,\dots, U_n}$ of $\calB$,
and constants $T_i>0$ and $R_i>0$, for each $i=1,\dots, n$, such that
\[
\norm{x_w(t)}\geq t R_i
\]
whenever $w\in\calB\cap U_i$ and $t>T_i$.
In particular, if we set $R=\min\set{R_1,\dots,R_n}$
and $T=\max\set{T_1\dots,T_n}$ we have
\[
\norm{x_w(t)}\geq tR
\]
whenever $w\in\calB$ and $t>T$. 
On the other hand, the set 
\[
\{ x_w(t)\quad  \mid\quad w\in \calB,\ t\in [0,T] \} 
\]
is compact and contained in $\Omega$, thus
there is a constant $S>0$ such that the inequality
\[
\norm{x_w(t)}\geq (T+1)S\geq(t+1)S
\]
holds for $w\in\calB$ and $0\leq t\leq T$.
Then there is $b>0$ such that
\[
\norm{x_w(t)}\geq b(t+1)
\]
for all $w\in\calB$ and all $t\geq 0$.

\end{proof}

\subsection{Regularity of the limit shape map}
\label{ss:regul-lim-shape}

We are now ready to state a regularity result of the limit shape map.

\begin{remark}
As in the section \ref{ss.Jacobi.equ},
for a given configuration without collisions $x\in\Omega$
we will denote $D^2U(x)$ the Hessian of $U$ at $x$, that is the endomorphism of $E^N$
which represents the symmetric bilinear form $d^{\,2}_xU$
with respect to the mass inner product.
\end{remark}

\begin{proposition} \label{differ-fig-limite}
The limit shape map $\sm:\calH\to E^N$ is of class $\calC^1$. 
Moreover, the following representation formula holds:

If $z=(x_0,v_0)\in \calH$ and $Z=(X,V)\in E^N\times E^N$, then
\[
d_z\sm(Z)=V+\int_0^{+\infty}
D^2U(x(t))\,J(t)\, dt,
\]
where $x(t)$ is the hyperbolic motion
with initial conditions $(x_0,v_0)$
and $J(t)$ is the Jacobi field along $x(t)$
with initial condition $(J(0),\dot J(0))=(X,V)$.
\end{proposition}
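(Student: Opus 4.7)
My plan is to exploit the identity $\sm(z)=\lim_{t\to +\infty}\dot x(t)$ recalled at the opening of this section. Integrating Newton's equation $\ddot x=\nabla U(x)$ between $0$ and $t$ and letting $t\to +\infty$ gives, for any $z=(x_0,v_0)\in\calH$,
\[
\sm(z)\;=\;v_0+\int_0^{+\infty}\nabla U(x_z(t))\,dt,
\]
where $x_z$ denotes the hyperbolic motion with initial condition $z$; the integral converges because Chazy's estimate yields $\norm{x_z(t)}\sim t$ and hence $\norm{\nabla U(x_z(t))}=O(t^{-2})$. The proposition should then follow by differentiating under the integral sign: the variational field associated to a perturbation $Z=(X,V)$ of $z$ is by construction the Jacobi field $J$ with $(J(0),\dot J(0))=(X,V)$, and the chain rule gives $\partial_s\nabla U(x_{z+sZ}(t))\big|_{s=0}=HU(x_z(t))\,J(t)$. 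The real work is to justify the interchange of limit and integral, and this is precisely what Lemmas \ref{croiss-jacobi} and \ref{compacite-fig-lim} are designed to supply.

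To build a dominating function I fix $z_0\in\calH$ and pick a compact neighborhood $\calB$ of $z_0$ inside $\calH$, which exists since $\calH$ is open by Chazy's lemma. The family $\calF=\{x_w\mid w\in\calB\}$ meets the hypotheses of Lemma \ref{compacite-fig-lim}, hence those of Lemma \ref{croiss-jacobi}. Consequently there is $c>0$ such that every Jacobi field $J$ along every $x_w\in\calF$ with $\norm{J(0)}\leq 1$ and $\snorm{\dot J(0)}\leq 1$ satisfies $\norm{J(t)}\leq c(t+1)$ for all $t\geq 0$. Combining this with the degree-$(-3)$ homogeneity bound (\ref{borne-HU-hom}) for $HU(x_w(t))$, we get
\[
\norm{HU(x_w(t))\,J(t)}\;\leq\;c\,k\,(t+1)^{-2},
\]
which is integrable on $[0,+\infty)$ and holds uniformly for $w\in\calB$ and for Jacobi fields with unit-ball initial data. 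By linearity of the Jacobi equation, the same estimate scales to any bounded set of initial data.

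With this bound I carry out the differentiation. Writing $x_s(t)$ for the motion with initial condition $z+sZ$ and $J_s(t)$ for the Jacobi field along $x_s$ with initial data $(X,V)$, a Taylor expansion with integral remainder in the $s$-variable yields
\[
\frac{\sm(z+sZ)-\sm(z)}{s}-V\;=\;\int_0^{+\infty}\left(\frac{1}{s}\int_0^s HU(x_\sigma(t))\,J_\sigma(t)\,d\sigma\right)dt.
\]
The uniform $(t+1)^{-2}$ majorant legitimates dominated convergence as $s\to 0$ in the outer integral, while standard continuous dependence of the flow and of Jacobi fields on initial data gives pointwise convergence of the inner mean to $HU(x_z(t))J(t)$. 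This produces the representation formula for $d_z\sm(Z)$. For the $\calC^1$ regularity, the same estimate applies: if $(z_n,Z_n)\to(z,Z)$, then enlarging $\calB$ slightly to contain the sequence, the integrands $HU(x_{z_n}(t))J_n(t)$ converge pointwise to $HU(x_z(t))J(t)$ and are dominated by the common majorant, so dominated convergence yields $d_{z_n}\sm(Z_n)\to d_z\sm(Z)$. The crux is thus the linear-growth control of Jacobi fields, already furnished by Section \ref{ss:growth-jacobi}; what remains is limit-integral bookkeeping.
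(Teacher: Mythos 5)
Your proposal is correct and follows essentially the same route as the paper: both start from the identity $\sm(z)=v_0+\int_0^{+\infty}\nabla U(x_z(t))\,dt$, apply Lemmas \ref{compacite-fig-lim} and \ref{croiss-jacobi} to a compact set of initial conditions to get the uniform integrable majorant $ck\,(t+1)^{-2}$ for $HU(x(t))J(t)$, and then differentiate under the integral sign and prove continuity of $z\mapsto\partial_Z\sm(z)$ by dominated convergence. The only cosmetic difference is that the paper takes the segment $[z-\epsilon Z,z+\epsilon Z]$ as the compact set rather than a compact neighborhood.
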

\begin{proof}

We have already remarked that if $y : [0,+\infty)\to \Omega$ is a hyperbolic motion, the limit shape of $y$ is given by 
$\lim_{t\to +\infty} \dot{y}(t)$, and 
since $y$ is a motion, we have 
\[
\sm(y(0),\dot{y}(0))=\dot{y}(0)+\displaystyle\int_0^{+\infty} \nabla U(y(t))\, dt.
\]
Let us choose $\epsilon>0$ such that the segment
$[z-\epsilon Z,z+\epsilon Z]$ is contained in $\calH$.
We know that
for any $s\in [-\epsilon,\epsilon]$, if we set  
\[
x(s,t)=
\pi\circ\varphi^t(x_0+sX, v_0+sV)\,.
\]
then it holds
\[
{\mathfrak a}(z+sZ)=
v_0+sV+\displaystyle\int_0^{+\infty} \nabla U(x(s,t))\, dt.
\]
In a first time, we think at $z$ and $Z$ as being fixed,
and we wish to show the existence of the directional derivative
\[
\partial_Z\,{\mathfrak a}(z)=\left.\frac{d}{ds}\right|_{s=0} {\mathfrak a}(z+sZ)
\]
so we need to show that
\[
\frac{d}{ds}
\int_0^{+\infty} \nabla U(x(s,t))\,dt=
\int_0^{+\infty} \frac{\partial}{\partial s}\nabla U(x(s,t))\,dt
\]
and for this, we will find a uniform domination condition for
\[
\frac{\partial }{\partial s}\nabla U (x(s,t))\,=\;D^2U(x(s,t))\;J_s(t)
\]
where $J_s$ is the Jacobi field along $t\mapsto x(s,t)$ such that
$J_s(0)=X$ and $\dot J_s(0)=V$.

By applying Lemma \ref{compacite-fig-lim}
to the compact set of initial conditions given by $\calB=[z-\epsilon Z,z+\epsilon Z]$, we find a compact subset $K\subset\Omega$ and a constant $b>0$ such that for all
$s\in [-\epsilon,\epsilon]$ and $t\geq 0$ we have
\[
x(s,t)/\norm{x(s,t)}\in K,\quad \norm{x(s,t)}\geq b(t+1).
\]
Since $D^2U$ is homogeneous of degree $-3$ and bounded on $K$, there exists a strictly positive constant $k>0$ such that for $s\in [-\epsilon,\epsilon]$
and $t\geq 0$ it holds
\[
\norm{D^2U(x(s,t))}\leq\, k\,(t+1)^{-3}
\]
Moreover, as noticed in Remark \ref{rem:Jacobi-linear-homog}, we find $c>0$
such that for any $s\in [-\epsilon,\epsilon]$ and $t\geq 0$
we have
\[
\norm{J_s(t)}\leq \, c\,\norm{Z}\,(t+1),
\]
from which we get the domination
\[
\norm{\frac{\partial}{\partial s}
\nabla U(x(s,t))}\;
\leq \;kc\norm{Z}(t+1)^{-2},
\]
for $s\in [-\epsilon,\epsilon]$ and $t\ge 0$.
Since the function $t\mapsto (t+1)^{-2}$ is integrable over
$[0,+\infty)$, we deduce by the dominated convergence theorem that $s\mapsto \sm(z+sZ)$ 
is differentiable. The derivative at $s=0$ is precisely
\[
\partial_Z  \sm(z)=
V+\int_0^{+\infty} D^2U(x(t))\,J(t)\,dt\,.
\]

Now, in order to prove that the limit shape map is $\calC^1$
we have to prove that for any fixed $Z=(X,V)\in E^N\times E^N$,
the partial derivative map
\[
z\;\mapsto\; \partial_Z {\mathfrak a}(z)
\]
is continuous on $\calH$. Reasoning as before,
applying Lemmas \ref{croiss-jacobi} and \ref{compacite-fig-lim} to
a given compact subset $K\subset \calH$,
we find again a positive constant $d=d(K)>0$ such that,
if $z\in K$, $Z\in E^N\times E^N$, and $t\geq 0$, then we have
\[
\norm{D^2U(x(t)) J(t)}\leq \,d \,\norm{Z}\,(t+1)^{-2},
\]
where $x(t)=\pi\circ\varphi^t(z)$ and $J$ is the Jacobi field
along $x$ with initial condition $(J,\dot J)(0)=Z$.
Applying again the dominated convergence theorem, we obtain the continuity of the map 
\[
K\to \R,\quad
z\;\mapsto \;\int_0^{+\infty} D^2U(x(t)) J(t)\,dt.
\]
We note that in this expression,
both $x(t)$ and $J(t)$ depends continuously on $z$.
Since $K$ is an arbitrary compact subset of $\calH$,
we deduce that $z\mapsto \partial_Z {\mathfrak a}(z)$
is continuous on $\calH$, thus ${\mathfrak a}$ is $\calC^1$,
and $d_z{\mathfrak a}(Z)=\partial_Z {\mathfrak a}(z)$
as we wanted to prove.

\end{proof}

\section{Uniqueness of hyperbolic motions}
\label{sec:uniq.hyp.motions}

In this section we prove that if $a\in\Omega$ is a configuration without collisions, there exists
a cutted cone $\cone=\cone_a(\alpha,r)$
with the following property: for each point $x_0\in\cone$ there is a unique hyperbolic 
motion contained in $\cone$, starting at $x_0$ at time $t=0$
whose limit shape is the configuration $a$.

\begin{lemma} \label{exist-applic-figlim}
Let $a\in\Omega$ be a configuration without collisions
and $\alpha>0$ such that the cone $\cone_a(\alpha)$
is contained in $\Omega\cup\set{0}$.
Two constants $\delta>0$ and $\lambda>0$ can be fixed
in order that the following property holds:
for any $\epsilon>0$ there is $r_0>0$ big enough such that,
if $z=(x_0,v_0)\in\cone_a(\alpha,r_0)\times\overline{B}(a,\delta)$
then we have
\begin{itemize}
\item[(i)]
The motion $x(t)=\pi\circ\varphi^t(z)$ is defined for all $t\geq 0$,
moreover we have
\[
x(t)\in\cone_a(\alpha,r_0)\;\textrm{ and }\;
\norm{\dot{x}(t)-v_0}\leq\epsilon\,.
\]
\item[(ii)] $t\mapsto \norm{x(t)}$ is strictly increasing and
$\norm{x(t)}\geq \norm{x_0}+\lambda t$.
\item[(iii)] $z\in\calH$ and
\[
\norm{\sm(z)-v_0}\leq \epsilon\,.
\]
\end{itemize}      
\end{lemma}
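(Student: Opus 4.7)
The plan is a bootstrap continuity argument exploiting that $\nabla U$ is small far out in $\cone_a(\alpha)$. Since $\cone_a(\alpha)\subset\Omega\cup\set{0}$, its unit-sphere slice is compact in $\Omega$ and the homogeneity of $\nabla U$ of degree $-2$ yields a constant $M>0$ with $\norm{\nabla U(x)}\leq M\norm{x}^{-2}$ for all $x\in\cone_a(\alpha)\setminus\set{0}$. I would fix a small $\delta>0$ satisfying $\overline{B}(a,\delta)\subset\Omega$, $\delta<\alpha\norm{a}$ and $\delta<\norm{a}(1-\alpha)/(1+\alpha)$, and set once and for all $\lambda=(\alpha\norm{a}-\delta)/2>0$. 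Since the conclusion becomes stronger as $\epsilon$ shrinks (a bound with smaller $\epsilon$ implies the same bound with any larger one, for the same $r_0$), in what follows I may additionally assume $\epsilon<\lambda$ and $\delta+\epsilon<\norm{a}(1-\alpha)/(1+\alpha)$.

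Given $z=(x_0,v_0)\in\cone_a(\alpha,r_0)\times\overline{B}(a,\delta)$, the local flow produces the motion $x(t)=\pi\circ\varphi^t(z)$ on a maximal forward interval $[0,T_{\max})$. I would define $T^*\in[0,T_{\max}]$ as the supremum of those $T$ for which the three bootstrap conditions $x(t)\in\cone_a(\alpha,r_0)$, $\norm{\dot x(t)-v_0}\leq\epsilon$ and $\norm{x(t)}\geq\norm{x_0}+\lambda t$ all hold on $[0,T]$, and show that each closes strictly, forcing $T^*=T_{\max}$. The master estimate, obtained by integrating Newton's equation and using the last two bootstrap hypotheses, is
\[
\norm{\dot x(t)-v_0}\leq\int_0^t\frac{M}{(\norm{x_0}+\lambda s)^2}\,ds\leq\frac{M}{\lambda r_0},
\]
which can be made as small as desired by enlarging $r_0$. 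Combined with $\inner{x_0}{a}\geq\alpha\norm{x_0}\norm{a}$ and $\norm{v_0-a}\leq\delta$ this yields: first, the velocity bound is strict; second, $\frac{d}{dt}\norm{x(t)}=\inner{x(t)/\norm{x(t)}}{\dot x(t)}\geq(\alpha\norm{a}-\delta)-M/(\lambda r_0)>\lambda$; and third, writing $x(t)=x_0+tv_0+\eta(t)$ with $\norm{\eta(t)}\leq tM/(\lambda r_0)$, an elementary comparison between $\inner{x(t)}{a}$ and $\alpha\norm{x(t)}\norm{a}$ shows the cone condition remains strict as long as $\delta+M/(\lambda r_0)<\norm{a}(1-\alpha)/(1+\alpha)$. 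Hence $T^*=T_{\max}$; and since $(x(t),\dot x(t))$ then remains in a fixed compact subset of $T\Omega$, $T_{\max}=+\infty$, giving (i) and (ii).

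For (iii), the same master estimate shows $\int_0^{+\infty}\nabla U(x(s))\,ds$ converges absolutely, so the limit $\sm(z)=v_0+\int_0^{+\infty}\nabla U(x(s))\,ds$ exists with $\norm{\sm(z)-v_0}\leq M/(\lambda r_0)<\epsilon$; a Cesaro average on $\dot x$ gives $x(t)/t\to\sm(z)$, and the proximity of $\sm(z)$ to $a\in\Omega$ certifies $\sm(z)\in\Omega$, so that $z\in\calH$. The delicate point of the argument is the coordination of the three bootstrap conditions: they must be tuned so that $\delta$ and $\lambda$ can be chosen a priori (depending only on $a$ and $\alpha$) and only $r_0$ needs to grow with $\epsilon$. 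This succeeds precisely because each of the three conditions has a strict slack that can be absorbed into the single decaying error $M/(\lambda r_0)$.
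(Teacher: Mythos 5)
Your argument is correct and is essentially the paper's own proof: the same bootstrap on $\norm{\dot x(t)-v_0}$, the same lower bound $\frac{d}{dt}\norm{x(t)}\geq\alpha\norm{a}-\delta-O(1/r_0)$ coming from the cone inequality, and the same closing of the loop via $\norm{\nabla U(x)}\leq M\norm{x}^{-2}$ on $\cone_a(\alpha)$, the only cosmetic difference being that you verify cone-invariance directly from the decomposition $x(t)=x_0+tv_0+\eta(t)$ while the paper keeps $\dot x(t)$ in a ball $\overline{B}(a,2\delta)\subset\cone_a(\alpha)$ and uses that the cone is convex and additively closed. One phrase to repair: the orbit does not remain in a compact subset of $T\Omega$ (the position is unbounded); global existence follows instead because the velocity stays bounded and $x(t)$ stays in $\cone_a(\alpha,r_0)$, hence at distance at least $c\,r_0>0$ from the collision set $\Delta$.
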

    
\begin{proof}
First, we want to show that there is $\delta>0$ such that,
if $\norm{\dot x(t)-v_0}<\delta$ for all $t\in(0,\tau)$ and
for some $\tau>0$ then in fact both $x(t)$ and $\dot x(t)$
remain in the cone $\cone_a(\alpha)$.
This is fulfilled if we take $\delta>0$ small enough such that
the closed ball $\overline B(a,2\delta)$ is contained in
the cone $\cone_a(\alpha)$.
Indeed, since we have  $\norm{v_0-a}\leq\delta$  then
we also have $\norm{\dot x(t)-a}<2\delta$ for all $0\leq t<\tau$.
Since the cone is additively closed, it follows that
\[
x(t)=x_0+\int_0^t\dot x(s)\,ds
\]
is also in the cone $\cone_a(\alpha)$ for all $t<\tau$.

Now we want to ensure that $\norm{\dot x(t)-v_0}$ must remain smaller than $\delta$,
so we want to estimate the integral
\[
\dot x(t)-v_0=\int_0^{t}\nabla U(x(s))\,ds\,,
\]
which can be achieved, since $\nabla U$ is homogeneous of degree $-2$,
by ensuring rapid growth in the size of $x(t)$. For this reason,
we have to prove at the same time both statements (i) and (ii).

In order to do this, let $\delta>0$ such that
the closed ball $\overline B(a,2\delta)$ is contained in
the cone $\cone_a(\alpha)$,
let $[0,T)$ be the maximal interval of definition in the future
for the motion $x(t)$, and let
\[
\tau=\sup\set{t\in(0,T)\mid
\norm{\dot x(s)-v_0}<\delta \textrm{ for all } 0<s<t}\,.
\]
Let also denote $I(t)=\norm{x(t)}^2$. For $t\in [0,\tau)$ we have
$x(t)\in\cone_a(\alpha)$,
which can be expressed by $\inner{x(t)}{a}\geq \alpha\norm{a}\norm{x(t)}$,
and therefore
\begin{eqnarray*}
\dot I (t)&=&2\inner{x(t)}{\dot{x}(t)}=
2(\inner{x(t)}{a}+\inner{x(t)}{\dot{x}(t)-a})\\
&\geq& 2(\alpha\norm{a}-2\delta)\;\norm{x(t)}.
\end{eqnarray*}
This last inequality can be written as
\[
\dot I(t)\,I(t)^{-1/2}\geq 2\lambda\,,
\]
where $\lambda=\alpha\norm{a}-2\delta$. 
From now on we suppose also that $\delta<\alpha\norm{a}/2$
so we have that $\lambda>0$,
which gives the strict growth of $\norm{x(t)}$ in $[0,\tau)$.
By integrating we find 
\[
\norm{x(t)}=I(t)^{1/2}\geq \norm{x_0}+\lambda t 
\]
for all $t\in[0,\tau)$.

To complete the proof of (i), we recall that we have a given
$\epsilon>0$, and we must choose $r_0>0$
big enough in such a way that $\tau=T=+\infty$, and that
\[
\norm{\dot x(t)-v_0}\leq
\int_0^{t}\norm{\nabla U(x(s))}\,ds\,\leq\epsilon
\]
for all $t\geq 0$,
provided $x_0\in\cone_a(\alpha,r_0)$ and $\norm{v_0-a}\leq \delta$.
If we define
\[
\mu=\sup
\set{\norm{\nabla U(x)}\mid
x\in\cone_a(\alpha) \textrm{ and }\norm{x}=1}
\]
we can write, for $t\in [0,\tau)$,
\[
\norm{\dot x(t)-v_0}\,\leq\,
\int_0^{+\infty}\frac{\mu}{(\norm{x_0}+\lambda s)^2}\,ds\,=\,
\frac{\mu}{\lambda \norm{x_0}}\,.
\]
Thus, if we define
\[
r_0=\frac{2\mu}{\,\lambda\,\min\set{\epsilon,\delta}\,}\,,
\]
we get
\begin{equation}\label{eq.lemma3.1}
\norm{\dot x(t)-v_0}\leq\frac{1}{2}\min\set{\epsilon,\delta}
\end{equation}
for all $t\in [0,\tau)$.
Therefore we have proved that, for this choice of $r_0>0$,
if $\norm{\dot x(t)-v_0}<\delta$ for all $t\in [0,\tau)$
then actually $\norm{\dot x(t)-v_0}\leq \delta/2$ for $t\in[0,\tau)$.
Since $\dot x(t)$ is continuous, the set
$[0,\tau)$ is relatively closed in $[0,T)$ so $\tau=T$.

Now we see that $T=+\infty$. We have, 
$\norm{\dot x(t)-v_0}<\delta$ for $t\in [0,T)$,
which in turn implies that $x(t)\in\cone_a(\alpha,r_0)$
also for any $t\in [0,T)$.
Since $\cone_a(\alpha,r_0)\subset \Omega$ and $\nabla U$ is a homogeneous function of degree $-2$, it is bounded on the cone $\cone_a(\alpha,r_0)$, therefore $x(t)$ is defined for all $t\geq 0$, that is $T=+\infty$.

We have proved (i) and (ii). Then we can deduce that
the integral of $\nabla U(x(t))$
converges,  meaning also that $\dot x (t)$ converges
to a configuration without collisions, since by assumptions on $v_0$ and estimate (\ref{eq.lemma3.1}) we have that $\dot{x}(t)\in \overline{B}(a,2\delta)$ for all $t\geq 0$, 
and by choice of $\delta$ we have $\overline{B}(a,2\delta)\subset \Omega$.  
Therefore $x(t)$ is a hyperbolic motion,
that is $z\in\calH$, and moreover $\sm(z)$ satisfies (iii).
\end{proof}

The following Proposition gives a proof of item $(1)$
in Theorem \ref{thm:cone}.
\begin{proposition} \label{thm-unicite-loc}
Let $a\in\Omega$ be a configuration without collisions and $\cone_a(\alpha)$ be a given cone contained in $\Omega\cup\{0\}$. Then there exists 
$r_1>0$ such that,
for any $x_0\in\cone_a(\alpha,r_1)$
there exists a unique hyperbolic motion $x(t)$ starting at $x_0$,
with limit shape $a$, and contained in the cone 
$\cone_a(\alpha,r_1)$ for all $t\geq 0$.
Moreover, given $\epsilon>0$ we can choose $r_1>0$
in order that $\norm{\dot{x}(t)-a}<\epsilon$
for all $t\geq 0$.
\end{proposition}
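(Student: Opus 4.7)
The plan is to reduce the proposition to a Banach fixed-point problem for the initial velocity, exploiting the $\calC^1$-regularity of the limit shape map $\sm$ from Proposition \ref{differ-fig-limite}. Fix the constants $\delta,\lambda>0$ provided by Lemma \ref{exist-applic-figlim} and choose $\epsilon_0<\min(\epsilon,\delta)/2$. By that lemma, for $r_1$ large enough, every initial condition $(x_0,v_0)\in\cone_a(\alpha,r_1)\times\overline B(a,\delta)$ produces a hyperbolic motion $x(t)$ that remains in $\cone_a(\alpha,r_1)$, satisfies $\norm{x(t)}\geq r_1+\lambda t$, and $\norm{\sm(x_0,v_0)-v_0}\leq\epsilon_0$. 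Fixing $x_0$, I would consider the map
\[
\Psi_{x_0}\colon\overline B(a,\delta)\to E^N,\qquad
\Psi_{x_0}(v)=a+v-\sm(x_0,v),
\]
whose fixed points are exactly the initial velocities in $\overline B(a,\delta)$ producing limit shape $a$. The bound $\norm{\Psi_{x_0}(v)-a}\leq\epsilon_0\leq\delta$ already shows that $\Psi_{x_0}$ maps $\overline B(a,\delta)$ into itself.

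The heart of the argument is to show that $\Psi_{x_0}$ is a contraction for $r_1$ large, uniformly in $x_0$. Proposition \ref{differ-fig-limite} yields $\partial_v\sm(x_0,v)\cdot V = V+\int_0^{+\infty}HU(x(t))\,J(t)\,dt$ with $J(0)=0$ and $\dot J(0)=V$. Since $\alpha>\alpha_0(a)$, the set $K=\cone_a(\alpha)\cap\set{\norm{y}=1}$ is a compact subset of $\Omega$, so the homogeneity of $HU$ of degree $-3$ gives $\norm{HU(x(t))}\leq \mu_H(r_1+\lambda t)^{-3}$, uniformly in $x_0$ and $v\in\overline B(a,\delta)$. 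Lemmas \ref{croiss-jacobi} and \ref{compacite-fig-lim}, applied to the compact family of initial conditions parametrized by $v\in\overline B(a,\delta)$, yield $\norm{J(t)}\leq c(t+1)\norm{V}$ with $c$ depending only on $K$ and $\lambda$. A direct integration gives $\norm{\partial_v\sm(x_0,v)-\mathrm{Id}}_{op}=O(1/r_1)$, so for $r_1$ large enough $\Psi_{x_0}$ is a $\tfrac12$-Lipschitz map, and Banach's theorem produces a unique fixed point $v^\ast\in\overline B(a,\delta)$, with the motion starting at $(x_0,v^\ast)$ staying in the cone by Lemma \ref{exist-applic-figlim}.

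The main obstacle is to upgrade this uniqueness \emph{inside} $\overline B(a,\delta)$ to uniqueness among \emph{all} hyperbolic motions in the cone with limit shape $a$. For this, let $x(t)$ be any such motion; since $\dot x(t)\to a$ the open set $A=\set{t\geq 0\mid \norm{\dot x(t)-a}>\delta}$ is bounded above. If $A\neq\emptyset$, set $T_0=\sup A$; by continuity $\norm{\dot x(T_0)-a}=\delta$. Applying Lemma \ref{exist-applic-figlim} to $(x(T_0),\dot x(T_0))\in\cone_a(\alpha,r_1)\times\overline B(a,\delta)$ gives $\norm{\dot x(T_0+t)-\dot x(T_0)}\leq\epsilon_0$ for all $t\geq 0$; letting $t\to+\infty$ and using $\dot x(t)\to a$ yields $\delta=\norm{a-\dot x(T_0)}\leq\epsilon_0<\delta$, a contradiction. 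Hence $A=\emptyset$ and $\dot x(0)\in\overline B(a,\delta)$, so the Banach uniqueness applies. The final estimate $\norm{\dot x(t)-a}<\epsilon$ follows from combining $\norm{\dot x(t)-v^\ast}\leq\epsilon_0$ (Lemma \ref{exist-applic-figlim}(i)) with $\norm{v^\ast-a}=\norm{\int_0^{+\infty}\nabla U(x(s))\,ds}=O(1/r_1)\leq\epsilon_0$, upon enlarging $r_1$ once more.
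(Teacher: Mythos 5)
Your argument is correct and follows essentially the same route as the paper: reduce the problem to solving $\sm(x_0,v)=a$ for $v\in\overline B(a,\delta)$, use the Jacobi-field bounds of Lemmas \ref{croiss-jacobi} and \ref{exist-applic-figlim} (with the fixed compact $K=\cone_a(\alpha)\cap\set{\norm{y}=1}$, which is what makes the estimate uniform in $x_0$ over the non-compact cone --- Lemma \ref{compacite-fig-lim} is not really what is needed here) to show $\norm{\partial_v\sm-\mathrm{Id}}=O(1/r_1)$, and then rule out hyperbolic motions in the cone whose initial velocity leaves $\overline B(a,\delta)$ by a boundary-crossing argument at the last time the velocity sits on $\partial\overline B(a,\delta)$. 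The one local difference is that you obtain existence and uniqueness of $v^\ast$ in one stroke via the Banach fixed point theorem for $\Psi_{x_0}(v)=a+v-\sm(x_0,v)$, whereas the paper proves injectivity from the same derivative bound and then gets existence by minimizing $g_{x_0}(v)=\norm{\sm(x_0,v)-a}^2$ over the ball and using invertibility of $\partial_v\sm$ at an interior minimum; your variant is slightly more economical and equally valid.
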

\begin{proof}
We start by applying Lemma \ref{exist-applic-figlim} with $\epsilon=\delta/4$.
Thus we are assuming that there are constants $\delta>0$ and
$r_0>0$ for which the following property holds:
if $x_0\in\cone_a(\alpha,r_0)$ and $v_0\in\overline B(a,\delta)$ then
$(x_0,v_0)\in\calH$, and $\norm{\sm (x_0,v_0)-v_0}\leq \delta/4$.

The next step is to prove the existence of 
an implicit function for the equation $\sm (x_0,v_0)=a$ in some cone
$\cone_a(\alpha, r_1)$ with $r_1>r_0$.
That is, we are going to prove the following claim.

\vspace{.2cm}
\noindent
\emph{Claim.}
For some $r_1\geq r_0$ there is
a $C^1$ function $\calV_a:\cone_a(\alpha,r_1)\to B(a,\delta)$
such that for each $x_0\in\cone_a(\alpha,r_1)$ we have that $v_0=\calV_a(x_0)$ is the unique initial velocity
in $\overline B(a,\delta)$ satisfying $\sm(x_0,v_0)=a$.
\vspace{.2cm}

The proof of this claim will use the fact that,
for some $r_1>0$ big enough, we can make
the partial derivative $\partial \sm/\partial v$
as close as we want to the identity in $E^N$, and this uniformly in
$z=(x_0,v_0)\in\cone_a(\alpha,r_1)\times\overline B(a,\delta)$.

Indeed, according to Proposition \ref{differ-fig-limite}
we know that for a given $V\in E^N$
\[
\frac{\partial\,\sm }{\partial\,v}(z)(V)=
d_z\sm(0,V)=
V+\int_0^{+\infty} D^2U(x(t))\,J(t)\,dt
\]
where $x(t)$ is the hyperbolic motion
with initial condition $z=(x_0,v_0)$
and $J$  is the Jacobi field along the motion $x(t)$
with initial conditions $J(0)=0$ and $\dot J(0)=V$.
In order to get a uniform estimate for the integral,
we will apply Lemma \ref{croiss-jacobi}  
with the compact set
$K=\set{x_0\in\cone_a(\alpha)\mid \norm{x_0}=1}$ and Remark \ref{rem:Jacobi-linear-homog}.
Note that condition (ii) in this lemma
is a consequence of item (ii) in
Lemma \ref{exist-applic-figlim} by choosing
$b=\min\set{\lambda,r_0}$. Therefore
we arrive to the following upper bounds:
\begin{eqnarray*}
\norm{J(t)}&\leq & c\,\norm{V}\,(t+1)\,\\
\norm{D^2U(x(t))}&\leq& d\,(\norm{x_0}+\lambda t)^{-3}
\end{eqnarray*}
for some constants $c,d>0$ depending only on $r_0$
and the cone $\cone_a(\alpha)$.
Then, given $\eta>0$ we have
\[
\norm{\int_0^{+\infty} D^2U(x(t))\,J(t)\,dt}\;\;\leq\;\;
\frac{cd\norm{V}}{\norm{x_0}}\;\int_0^{+\infty}
\frac{u+r_0^{-1}}{(1+\lambda u)^3}du
\leq  \eta\norm{V}
\]
provided that $\norm{x_0}\geq r_1=\max\{r_0,\rho_0\}$,
where we set 
\[
\rho_0=
\frac{cd}{\eta}\int_0^{+\infty}
\frac{u+r_0^{-1}}{(1+\lambda u)^3}du\,.
\]
We conclude that for $z=(x_0,v_0)$ in the product
$\cone_a(\alpha,r_1)\times\overline B(a,\delta)$
we have that for any $V\in E^N$
\[\norm{\frac{\partial\,\sm}{\partial\,v}(z)(V)-V}
\leq \eta\norm{V},\]
meaning that
\[
\norm{\frac{\partial\,\sm}{\partial\,v}(z)-Id}<\eta
\]
for any
$z\in\cone_a(\alpha,r_1)\times\overline B(a,\delta)$.
This implies the injectivity of the map
\[
v\;\mapsto\; \sm(x_0,v)
\]
in $\overline B(a,\delta)$ for all $x_0\in\cone_a(\alpha,r_1)$,
provided that $\eta<1$.
Indeed, if $v_0,w_0\in\overline B(a,\delta)$
and $x_0\in\cone_a(\alpha,r_1)$ then
\[
\norm{\sm(x_0,w_0)-\sm(x_0,v_0)}\geq (1-\eta)\norm{w_0-v_0}\,.
\]

Let us prove now that for any $x_0\in \cone_a(\alpha,r_1)$,
there exists $v_0\in B(a,\delta)$ such that 
$\sm(x_0,v_0)=a$. If this sentence is true, 
then $\calV_a(x_0)$ is defined as the unique $v_0$
contained in $B(a,\delta)$ satisfying the equation
$\sm(x_0,v_0)=a$. 
The fact that the map $\calV_a$ is $\calC^1$
is therefore a straightforward consequence of
the implicit function theorem.

Given $x_0\in\cone_a(\alpha,r_1)$, let us define the function
\[
g_{x_0}:\overline{B}(a,\delta)\to \R,
\qquad g_{x_0}(v)=\norm{\sm(x_0,v)-a}^2.
\]
Recall that by choice of $r_1$ we have that
$\norm{\sm(x_0,v)-v}\leq \delta/4$
for all $v\in\overline{B}(a,\delta)$, 
and in particular  $\norm{\sm(x_0,a)-a}\leq \delta/4$. 
At the same time,
if $v\in \partial\overline{B}(a,\delta)$ then
\[
\norm{\sm(x_0,v)-a}\geq
\norm{v-a}-\norm{\sm(x_0,v)-v}\geq \delta-\delta/4
=\frac{3\delta}{4},
\]
in particular, the minimum of $g_{x_0}$ is never reached on
$\partial\overline{B}(a,\delta)$.
Let now $v_0\in B(a,\delta)$ be a point where $g_{x_0}$
reaches a minimum. Assume for the sake of contradiction
that $\sm(x_0,v_0)\neq a$. We know that for any $V\in E^N$
it holds
\[
d_{v_0}g_{x_0}V\;=\;
2\;\inner{\;\sm(x_0,v_0)-a\;}{\;\frac{\partial\,\sm}{\partial\,v}(x_0,v_0)V\;}=0,
\]
but since $\partial\sm/\partial v(x_0,v_0)$
is invertible, we can find $V\in E^N$ so that
\[
\frac{\partial\,\sm}{\partial\,v}(x_0,v_0)\,V=
{\sm}(x_0,v_0)-a.
\]
Thanks to this choice we find
\[
d_{v_0}\,g_{x_0}\,V=2\norm{\sm(x_0,v_0)-a}^2=0,
\]
which contradicts the assumption $\sm(x_0,v_0)\neq a$.
Thus the equation $\sm(x_0,v_0)=a$ is satisfied at some
$v_0\in B(a,\delta)$. This finishes the proof of the claim.

\vspace{.2cm}

It follows now that for $x_0\in\cone_a(\alpha,r_1)$
the function $t\mapsto x(t)=\pi\circ\varphi^t(x_0,\calV_a(x_0))$
is the unique hyperbolic motion with limit shape $a$,
among all solutions starting at $x_0$ at time $t=0$
and whose initial velocity is in the ball $\overline B(a,\delta)$.
By (i) and (ii) of Lemma \ref{exist-applic-figlim},
the motion  $x(t)$ is contained in the cone $\cone_a(\alpha,r_1)$.
Let us assume now, for the sake of contradiction,
the existence of another hyperbolic motion $y(t)$,
starting at some $x_0\in \cone_a(\alpha,r_1)$ at $t=0$,
with limit shape $a$, also contained in the cutted cone
$\cone_a(\alpha,r_1)$, and satisfying
$\dot y(0)\notin\overline B(a,\delta)$.
Necessarily we have $\lim_{\,t\to +\infty}\dot y(t)=a$,
hence there must be a time $\tau>0$ for which
$\dot y(\tau)\in\partial\overline B(a,\delta)$.
Clearly
$\sm(y(\tau),\dot y(\tau))=a$, hence by the the previous claim
$\dot y(\tau)=\calV_a(y(\tau))$, but this is in contradiction with
the fact that $\calV_a$ takes values
in the open ball $B(a,\delta)$. 

Finally, given $\epsilon>0$, we apply the previous claim and Lemma \ref{exist-applic-figlim} 
with $\epsilon$ replaced with $\epsilon/2$. Thus, writing 
again $x(t)=\varphi^t(x_0,\calV_a(x_0))$ for $x_0\in \cone_a(\alpha,r_1)$, 
since $x(0)=x_0$ and $\sm(x(0),\dot{x}(0))=a$,
we get that $\norm{\dot{x}(0)-a}<\epsilon/2$ as well as that
$\norm{\dot{x}(t)-\dot{x}(0)}\leq \epsilon/2$ for all $t\geq 0$,
therefore $\norm{\dot{x}(t)-a}<\epsilon$ for all $t\geq 0$
as we wanted to prove.
\end{proof}

\begin{remark} \label{rk-monot}
For the motions $x(t)$ considered in Lemma
\ref{exist-applic-figlim} and Proposition \ref{thm-unicite-loc},  
the size $\norm{x(t)}$ is always a strictly increasing function.
Therefore, the conclusions of the two results still holds
if we replace $r_0$ or $r_1$ with bigger constants.
In a similar way, in Lemma \ref{exist-applic-figlim},
if we keep fixed $\epsilon$ and we decrease $\delta$,
the conclusions are still true with the same constants $\lambda$ and $r_0$.  
\end{remark}

\section{Proof of the theorem of the cone}
\label{sec:proof.thm.cone}

Given a configuration $a\in\Omega$, let us recall that
$\alpha_0(a)$ is the infimum of the $\alpha\in (0,1)$ such that
$\cone_a(\alpha)\cap\Delta=\set{0}$.
Given $\alpha\in(\alpha_0(a),1)$,
by Proposition \ref{thm-unicite-loc} we can find
a constant $r_1>0$ such that from each $x_0\in \cone_a(\alpha,r_1)$ 
it starts a unique hyperbolic motion contained in
the cutted cone $\cone_a(\alpha,r_1)$ with limit shape $a$.
Now we show that, in an eventually reduced cone,
the uniqueness of the hyperbolic motion
(starting in the cone, and with limit shape $a$) is maintained
if we replace the condition of being contained in the cone
with the condition of being minimizing.
In other words, what distinguishes the unique hyperbolic
motion contained in the cone from the others
(that eventually may exist) is the property of being a geodesic ray.
The strategy of the proof is as follows.
We choose $\alpha$ and $\beta$ satisfying
 $\alpha_0(a)<\beta<\alpha<1$,
thus $\cone_a(\alpha)\subset\cone_a(\beta)$,
and $r_1>0$ such that the conclusion of
Proposition \ref{thm-unicite-loc} holds for both
$\cone_a(\alpha,r_1)$ and $\cone_a(\beta,r_1)$.
Then we will find a constant $r_2\geq r_1$
such that if $x\in\cone_a(\alpha,r_2)$,
then any hyperbolic geodesic ray $\gamma\in\calM_a$
starting at $\gamma(0)=x$ is necessarily contained
in $\cone_a(\beta,r_1)$.
This will give the uniqueness of $\gamma$.
We state before some preliminary results.

\begin{lemma}\label{lemma:D-estim}
Let $h>0$ and $a\in\Omega_h$. For any given $\beta\in(\alpha_0(a),1)$, three positive constants $k,l,m>0$ can be chosen
so that the following holds : if $x,y,z\in\cone_a(\beta)$, $\rho>0$ and $\lambda\in (0,1)$ satisfy
\[
\norm{x}=\norm{z}=\rho\,,\;\textrm{ and }
\;\norm{y}=\lambda\rho\,,
\]
then we have
\[
\calD(x,y,z)=\phi_h(x,y)+\phi_h(y,z)-\phi_h(x,z)
>k(l-\lambda)\rho-m\,.
\]
\end{lemma}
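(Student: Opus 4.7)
The plan is to combine a lower bound for $\phi_h(x,y)+\phi_h(y,z)$, coming from the universal Euclidean lower bound on the Jacobi--Maupertuis distance, with an upper bound for $\phi_h(x,z)$ obtained by taking the Euclidean segment $[x,z]$ as a competing path. The whole point is that this segment stays safely inside the cone, so the Newtonian potential $U$ along it contributes only an $O(1)$ correction.

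For the lower bound, AM--GM gives $\tfrac{1}{2}\norm{\dot\gamma}^2+h\ge\sqrt{2h}\,\norm{\dot\gamma}$ for any absolutely continuous curve $\gamma$, and therefore $\calA_{L+h}(\gamma)\ge\sqrt{2h}\,\ell_E(\gamma)$, where $\ell_E$ is the Euclidean length with respect to the mass inner product. Taking the infimum gives the universal estimate $\phi_h(p,q)\ge\sqrt{2h}\,\norm{p-q}$ on all of $E^N$. Combined with $\norm{x-y}\ge\norm{x}-\norm{y}=(1-\lambda)\rho$ and its twin for $\norm{y-z}$, this yields
\[
\phi_h(x,y)+\phi_h(y,z)\ge 2\sqrt{2h}\,(1-\lambda)\rho.
\]

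For the upper bound I use the segment $p(t)=(1-t)x+tz$, $t\in[0,1]$. The cone $\cone_a(\beta)$ is convex, an immediate consequence of $\inner{\cdot}{a}\ge\beta\norm{a}\norm{\cdot}$ combined with the Euclidean triangle inequality, so $p(t)\in\cone_a(\beta)$ for all $t$. A direct computation of $\norm{p(t)}^2$ shows $\norm{p(t)}\ge\rho\cos(\theta_{xz}/2)\ge\rho\beta$, where $\theta_{xz}$ is the angle between $x$ and $z$ and one uses $\theta_{xz}\le 2\arccos\beta$. Because $\beta>\alpha_0(a)$, the cross-section $\cone_a(\beta)\cap\set{y\in E^N\mid\norm{y}=1}$ is a compact subset of $\Omega$, so $U$ is bounded there by some constant $C=C(a,\beta)$, and homogeneity of degree $-1$ gives $U(p(t))\le C/\norm{p(t)}\le C/(\rho\beta)$ along the segment. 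Using the scalar inequality $\sqrt{2(h+u)}\le\sqrt{2h}+u/\sqrt{2h}$ (valid for $u\ge 0$, from $\sqrt{1+s}\le 1+s/2$) together with the chord estimate $\norm{x-z}\le 2\rho\sqrt{1-\beta^2}$, integration along the segment yields
\[
\phi_h(x,z)\;\le\;\sqrt{2h}\,\norm{x-z}+\frac{\norm{x-z}}{\sqrt{2h}}\cdot\frac{C}{\rho\beta}\;\le\;\sqrt{2h}\,\norm{x-z}+\frac{2C\sqrt{1-\beta^2}}{\beta\sqrt{2h}}\;=:\;\sqrt{2h}\,\norm{x-z}+M,
\]
with $M$ depending only on $a$, $\beta$, and $h$.

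Subtracting the two estimates yields
\[
\calD(x,y,z)\;\ge\;\sqrt{2h}\bigl(\norm{x-y}+\norm{y-z}-\norm{x-z}\bigr)-M\;\ge\;2\sqrt{2h}\,(l_0-\lambda)\rho-M,
\]
with $l_0=1-\sqrt{1-\beta^2}\in(0,1]$. Taking $k=2\sqrt{2h}$, $l=l_0$ and $m=M+1$ gives the stated strict inequality whenever $\lambda\le l_0$; in the complementary regime $\lambda>l_0$, the right-hand side $k(l-\lambda)\rho-m$ is strictly negative while $\calD\ge 0$ by the triangle inequality for the distance $\phi_h$, so the inequality holds trivially. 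The only genuine difficulty is keeping $M$ bounded uniformly in $\rho$, and it is precisely the convexity of the cone together with the hypothesis $\beta>\alpha_0(a)$ that makes this possible.
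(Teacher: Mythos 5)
Your proof is correct and follows essentially the same route as the paper's: the universal lower bound $\phi_h(p,q)\ge\sqrt{2h}\,\norm{p-q}$ for the two legs through $y$, the Euclidean segment $[x,z]$ as a competitor whose potential term is $O(1)$ by homogeneity of $U$ and the bound $\norm{p(t)}\ge\beta\rho$ inside the collision-free cone, and the same constants $k=2\sqrt{2h}$, $l=1-\sqrt{1-\beta^2}$. The only (harmless) differences are that you make the convexity of the cone and the strictness of the final inequality explicit (by enlarging $m$), details the paper leaves implicit.
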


\begin{proof}
By a standard argument (see Lemma 4.4 in \cite{MaVe})
we know that
\begin{equation*}
\phi_h(x,y)\geq\sqrt{2h}\norm{y-x}
\;\;\textrm{ and }\;\; 
\phi_h(y,z)\geq\sqrt{2h}\norm{z-y}.
\end{equation*}

To establish the lemma, we will now estimate $\phi_h(x,z)$
using the linear path going from $x$ to $z$ with constant speed
equal to $\sqrt{2h}$.
If $\sigma:[0,T]\to\Omega$ is that linear path then
clearly we have
\[
T=\frac{\norm{z-x}}{\sqrt{2h}}\;\;
\textrm{ and }\;\;
\dot\sigma(t)=\sqrt{2h}\;\frac{z-x}{\norm{z-x}}
\]
for all $t\in [0,T]$. Therefore we obtain the upper bound
\[
\phi_h(x,z)\;\leq\;
\calA_{L+h}(\sigma)=\sqrt{2h}\,\norm{z-x}+
\int_0^T U(\sigma(t))\,dt.
\]
We now look for an upper bound for the integral of
the Newtonian potential over the segment $\sigma$.
According to the definition of the cone $\cone_a(\beta)$,
we also know that
\[
\norm{z-x}\leq \,2\rho\,\sqrt{1-\beta^2},
\;\;\textrm{ and that }\;\;
\norm{\sigma(t)}\geq \,\rho\,\beta
\]
for all $t\in [0,T]$. Thus, if we set
\[
\mu=\max\set{U(w)\mid\;w\in \cone_a(\beta), \;\norm{w}=1},
\]
by using the homogeneity of the potential we get
the upper bound
\[
\int_0^T U(\sigma(t))\,dt\leq \int_0^T \frac{\mu}{\norm{\sigma(t)}}\, dt \leq
T\,\frac{\mu}{\rho\beta}=
\frac{\norm{z-x}}{\sqrt{2h}}
\frac{\mu}{\rho\beta}\leq m,
\]
where we set
\[
m=\frac{2\mu}{\sqrt{2h}}\sqrt{\frac{1}{\beta^2}-1}.
\]
On the other hand, we also have that
\[
\min(\norm{x-y},\,\norm{z-y})\,\geq\,(1-\lambda)\rho\,,
\]
from which we deduce that
\[
\calD(x,y,z)\;\geq\;
2\sqrt{2h}
\left(\left(1-\sqrt{1-\beta^2}\right)-\lambda\right)
\rho\,-\,m\,.
\]
So the proof is achieved by setting also
$k=2\sqrt{2h}$ and $l=1-\sqrt{1-\beta^2}$.
\end{proof}

\begin{proposition}
\label{prop:borne-rentr}
Given $\beta\in(\alpha_0(a),1)$ there exists $\lambda>0$
and $r>0$ such that, if $\gamma\in\calM_a$
is a hyperbolic geodesic ray starting at $x\in\cone_a(\beta,r)$,
and $y$ is the configuration where $\gamma$ enters definitively
into $\cone_a(\beta,r)$, then $\norm{y}\geq\lambda\norm{x}$. 
\end{proposition}

\begin{proof}
If $\gamma\in\calM_a$ with $\gamma(0)=x$,
then the configuration $y$ where $\gamma$ enters definitively
into the cone is $y=\gamma(\tau)$ where
\[
\tau=\min\set{t\geq 0\mid
\gamma(s)\in\cone_a(\beta,r) \textrm{ for all }s>t}.
\]

We now discuss according to $\norm{y}\geq\norm{x}$ or not.
In the first case, any $\lambda\leq 1$ works.
Therefore we assume now that $\norm{y}<\norm{x}$, which
is the non-trivial case.

We observe that for some $\tau'>\tau$ we must have
$\norm{\gamma(\tau')}=\norm{x}$. So the restriction of $\gamma$
to $[0,\tau']$ is a free-time minimizer for the action $\calA_{L+h}$  with extremities
$x=\gamma(0)$ and $z=\gamma(\tau')$, in particular it is a minimizing geodesic for the distance $\phi_h$.
Moreover, the configuration $y=\gamma(\tau)$
belongs to this minimizing segment,
hence
\[
\phi_h(x,y)+\phi_h(y,z)=\phi_h(x,z)
\]
or equivalently
\[
\calD(x,y,z)=\phi_h(x,y)+\phi_h(y,z)-\phi_h(x,z)=0\,.
\]
Taking into account that $\norm{x}=\norm{z}\geq r$,
this equality contradicts Lemma \ref{lemma:D-estim}
if we take, for instance,
\[
\rho=\norm{x}=\norm{z},\;\;
\lambda=\frac{l}{2}\;\;\textrm{ and }\;\;r>\frac{2m}{kl}
\]
where $k,l,m>0$ are the positive constants
provided by Lemma \ref{lemma:D-estim}
\end{proof}

\begin{proposition} \label{prop:bornes-config-rentr}
Let $a\in\Omega$, and let $\alpha$ and $\beta$ such that
$\alpha_0(a)<\beta<\alpha<1$.
There exist $r_1>0$ large enough, and $c\geq 1$ such that,
for any $x\in\cone_a(\beta,r_1)\setminus\cone_a(\alpha)$,
the unique hyperbolic motion $\gamma$ starting at $x$,
with limit shape $a$, and contained in $\cone_a(\beta,r_1)$
enters in $\cone_a(\alpha)$ at a configuration $w$ which satisfies
$\norm{w}\leq c\norm{x}$.
\end{proposition}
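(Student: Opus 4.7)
The plan is to exploit the velocity control obtained in Proposition \ref{thm-unicite-loc}: for $\epsilon>0$ fixed in advance, $r_1$ can be chosen large enough so that every hyperbolic motion starting in $\cone_a(\beta,r_1)$, staying in this cutted cone and having limit shape $a$, satisfies $\norm{\dot\gamma(t)-a}<\epsilon$ for all $t\ge 0$. Since the proposition can be applied to both cones $\cone_a(\alpha,r_1)$ and $\cone_a(\beta,r_1)$ simultaneously after enlarging $r_1$ (Remark \ref{rk-monot}), we may freely work with such an $r_1$. Integrating the velocity bound yields $\norm{\gamma(t)-x-ta}\le \epsilon t$, and hence the two elementary inequalities
\[
\inner{\gamma(t)}{a}\;\ge\;\inner{x}{a}+t\norm{a}^{2}-\epsilon t\norm{a},\qquad \norm{\gamma(t)}\;\le\;\norm{x}+t(\norm{a}+\epsilon).
\]

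The second step is to choose $\epsilon$ small enough, specifically $\epsilon<(1-\alpha)\norm{a}/(1+\alpha)$, so that the quantity $\mu:=\norm{a}\bigl((1-\alpha)\norm{a}-\epsilon(1+\alpha)\bigr)$ is strictly positive. Combining the two estimates above, the membership condition $\inner{\gamma(t)}{a}\ge \alpha\norm{a}\norm{\gamma(t)}$ defining $\cone_a(\alpha)$ holds as soon as
\[
t\;\ge\; t^{*}(x):=\frac{\alpha\norm{a}\norm{x}-\inner{x}{a}}{\mu}\;\le\;\frac{\alpha\norm{a}}{\mu}\,\norm{x}.
\]
If $\tau$ denotes the first time at which $\gamma(\tau)\in\cone_a(\alpha)$, then $\tau\le t^{*}(x)$, so
\[
\norm{w}=\norm{\gamma(\tau)}\;\le\;\norm{x}+\tau\,(\norm{a}+\epsilon)\;\le\;\left(1+\frac{\alpha\norm{a}(\norm{a}+\epsilon)}{\mu}\right)\norm{x},
\]
giving the statement with $c:=1+\alpha\norm{a}(\norm{a}+\epsilon)/\mu$.

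To see that $w$ lies in $\cone_a(\alpha,r_1)$ rather than merely in $\cone_a(\alpha)$, I would invoke item (ii) of Lemma \ref{exist-applic-figlim}: the function $\norm{\gamma(t)}$ is strictly increasing, so $\norm{w}\ge\norm{x}\ge r_1$. I expect the main technical obstacle to be the joint calibration of parameters: $\epsilon$ must be small enough both to open the gap $(1-\alpha)\norm{a}-\epsilon(1+\alpha)>0$ and to activate Proposition \ref{thm-unicite-loc} for the wider cone $\cone_a(\beta)$, while $r_1$ must simultaneously work for both cones. Once these calibrations are made, the linear-in-$t$ control of $\dot\gamma-a$ translates directly into the desired linear-in-$\norm{x}$ bound on $\norm{w}$.
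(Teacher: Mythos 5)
Your argument is correct, and it reaches the conclusion by a genuinely different route than the paper. Both proofs start from the same input, namely the velocity control $\norm{\dot\gamma(t)-a}<\epsilon$ supplied by Proposition \ref{thm-unicite-loc} applied to the wider cone $\cone_a(\beta)$. The paper converts that control into the purely geometric statement $w-x\in\cone_a(\alpha')$ for some $\alpha'>\alpha$, retains only three scale-invariant conditions on the pair $(x,w)$ (namely $x\in\cone_a(\beta)\setminus\cone_a(\alpha)$, $w\in\partial\cone_a(\alpha)$, $w-x\in\cone_a(\alpha')$), normalizes $\norm{w}=1$, and derives the bound by a compactness-and-contradiction argument: a sequence with $\norm{w_n}/\norm{x_n}\to\infty$ would force a limit $w$ satisfying simultaneously $\inner{w}{a}=\alpha\norm{a}$ and $\inner{w}{a}\ge\alpha'\norm{a}$. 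You instead integrate the velocity bound to get $\norm{\gamma(t)-x-ta}\le\epsilon t$ and compute directly a time $t^*(x)\le\alpha\norm{a}\norm{x}/\mu$ after which the defining inequality of $\cone_a(\alpha)$ must hold; since $\norm{\gamma(t)}$ grows at most linearly, this bounds $\norm{w}$ by an explicit multiple of $\norm{x}$. Your calibration of $\epsilon$ (requiring $\epsilon<(1-\alpha)\norm{a}/(1+\alpha)$ so that $\mu>0$) is the correct replacement for the paper's choice of $\alpha'$ and plays the same role. What your version buys is an explicit, computable constant $c$ and a shorter, self-contained estimate; what the paper's version buys is that it isolates the scale-invariant geometry and avoids any computation with the specific form of the cone condition. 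One small point worth making explicit: the estimate applies equally to the \emph{definitive} entry time (not just the first entry time), since $\gamma(t)\in\cone_a(\alpha)$ for \emph{all} $t\ge t^*(x)$, so the definitive entry time is also at most $t^*(x)$; this is the form in which the proposition is used later, and the paper handles it by noting $w+\cone_a(\alpha')\subset\cone_a(\alpha)$. Your remark that $\norm{w}\ge r_1$ via the monotonicity of $\norm{\gamma(t)}$ correctly upgrades membership in $\cone_a(\alpha)$ to membership in $\cone_a(\alpha,r_1)$.
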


\begin{proof}
Let $\alpha'\in (\alpha,1)$ and $\epsilon>0$ such that
$B(a,\epsilon)\subset\cone_a(\alpha')$. 
For this $\epsilon$ we apply Proposition
\ref{thm-unicite-loc} to the cone $\cone_a(\beta)$.
Then we get $r_1>0$ such that, if $x\in\cone_a(\beta,r_1)$
then there is a unique hyperbolic motion $\gamma$
with limit shape $a$, starting at $x$ and contained
in $\cone_a(\beta,r_1)$. Moreover, it satisfies
$\dot \gamma(t)\in B(a,\epsilon)\subset\cone_a(\alpha')$
for all $t\geq 0$.
Therefore if $0\leq s\leq t$ we still have the inclusion
$\gamma(t)-\gamma(s)\in\cone_a(\alpha')$, in particular
$\gamma(t)-x\in \cone_a(\alpha')$ for all $t\geq 0$.
We will show now that this choice of $r_1$ gives 
the desired constant $c\geq 1$.  

Assuming that
$x=\gamma(0)\in\cone_a(\beta,r_1)\setminus\cone_a(\alpha)$,
let $w=\gamma(\tau)$ be the the first point where $\gamma$
enters the cone $\cone_a(\alpha)$.
That is,
\[
\tau=\min\set{t>0\mid\gamma(t)\in\cone_a(\alpha)}>0.
\]
We observe that, since $\alpha'>\alpha$,
for any $w\in\cone_a(\alpha)$ we have
\[
w+\cone_a(\alpha')\subset\cone_a(\alpha).
\]
Since $\dot{\gamma}(t)\in\cone_a(\alpha^\prime)$ for all $t\geq\tau$ and the cone $\cone_a(\alpha^\prime)$ is additively closed, it follows that 
$\gamma(t)\in w+\cone_a(\alpha^\prime)\subset \cone_a(\alpha)$ for $t\geq \tau$, which means that once $\gamma$ enters the cone,
it does so definitively. We recall that we are looking for an upper bound of
$\norm{w}/\norm{x}$ that does not depends on
$x\in\cone_a(\beta,r_1)\setminus\cone_a(\alpha)$.
To obtain this bound, we will forget for a moment
the curve $\gamma$, and only retain the following essential information :
we have
\begin{enumerate}
    \item[(i)]$x\in\cone_a(\beta)\setminus\cone_a(\alpha)$,
    so in particular $\norm{x}>0$,
    \item[(ii)]$w\in\partial\,\cone_a(\alpha)$,
    the boundary of the cone, and
    \item[(iii)]$w-x\in\cone_a(\alpha')$. 
\end{enumerate}
Clearly these three conditions are preserved by homothecies,
as is the ratio between the norms. That is, if $x$ and $w$
satisfy these three conditions, and $\eta>0$, then
$x'=\eta x$ and $w'=\eta w$ also satisfy the conditions and
$\norm{w}/\norm{x}=\norm{w'}/\norm{x'}$.
Therefore we can assume that $\norm{w}=1$ since $w\neq 0$.

Suppose that the proposition is false. Then we can construct
a sequence of pairs of configurations $(x_n,w_n)$
satisfying the three conditions and such that
$\norm{w_n}=1$ for all $n\geq 0$, and 
$\lim_{n\to\infty} x_n=0$.
We can assume that
$\lim_{n\to\infty} w_n= w$, and therefore
we must have $w\in\partial\,\cone_a(\alpha)$ and $\norm{w}=1$.
Actually, because of condition (ii) we have that
\[
\inner{w}{a}=\lim_{n\to\infty}\inner{w_n}{a}=\alpha\norm{a}.
\]
But on the other hand, by condition (iii) we have that
\[
\inner{w_n-x_n}{a}\geq \alpha'\norm{w_n-x_n}\norm{a}
\]
hence
\[
\inner{w}{a}=\lim_{n\to\infty}\inner{w_n-x_n}{a}\geq \alpha'\norm{a}.
\]
This contradicts the fact that $\alpha<\alpha'$,
therefore the proposition is proved.

\end{proof}

\begin{proof}[Proof of Theorem \ref{thm:cone}]
Given $a\in\Omega$ and $\alpha>\alpha_0(a)$,
let $r_1>0$ be the constant given by
Proposition \ref{thm-unicite-loc}.   
As already remarked, it follows from
Proposition \ref{thm-unicite-loc},
and Remark \ref{rk-monot}
that if $r\geq r_1$ then for any $x\in \cone_a(\alpha,r)$
there is a unique hyperbolic motion $\gamma:[0,+\infty)\to\Omega$,
starting at $x$ at $t=0$, with limit shape $a$,
and contained in the cone $\cone_a(\alpha,r)$. 
This is exactly item (1) in Theorem \ref{thm:cone}. 
In order to prove item (2) we shall show that when
$r$ is big enough, then $\gamma$ is the unique
element in $\calM_a$ with $\gamma(0)=x$.

Let now $\beta>0$, with $\alpha_0(a)<\beta<\alpha$,
and let $r_1>0$ given by Proposition \ref{thm-unicite-loc},
but now applied to the widest cone $\cone_a(\beta,r_1)$.
Therefore we have that, for any $x\in\cone_a(\beta,r_1)$,
there exist a unique hyperbolic motion $\gamma\in\calH_a$
such that $\gamma(0)=x$ and such that $\gamma(t)\in\cone_a(\beta,r_1)$
for all $t\geq 0$. Then the proof will work as follows.

We take $x\in\cone_a(\alpha,r_1)$
and $\eta\in\calM_a$ a geodesic ray with $\gamma(0)=x$.
The existence of at least one of these geodesic rays
is proven for any $x\in E^N$ in \cite{MaVe}.
If $\eta$ is contained in $\cone_a(\beta,r_1)$,
that is, if $\eta(t)\in\cone_a(\beta,r_1)$ for all $t\geq 0$,
then $\eta$ must be the unique hyperbolic motion $\gamma$
that we know it exists under those conditions.
For this reason, what we will do next is prove that
there exists $r>r_1$ such that, if $x\in\cone_a(\alpha,r)$
then any of such geodesic rays $\eta$ must necessarily be contained
in $\cone_a(\beta,r_1)$, so $\eta=\gamma$.

Assume now, for the sake of contradiction,
that $\eta\in\calM_a$ starts at $x\in\cone_a(\alpha,r)$
at $t=0$, and is not contained in $\cone_a(\beta,r_1)$.
Since $\eta\in\calH_a$, it must enter definitively in
$\cone_a(\beta,r_1)$ and in $\cone_a(\alpha,r)$.
We denote $y$ (resp. $z$) the configuration where $\eta$
definitively enters in $\cone_a(\beta,r_1)$
(resp. $\cone_a(\alpha,r)$).
Like in the proof of Proposition
\ref{prop:borne-rentr} let us consider 
\[
\calD(x,y,z)=\phi_h(x,y)+\phi_h(y,z)-\phi_h(x,z),
\]
for $h=\norm{a}^2/2$. 
Since $\eta$ is minimizing,
it holds $\calD(x,y,z)=0$.
We show now that if $r$ is big enough,
then necessarily $\calD(x,y,z)>0$ getting a contradiction.

Remembering that both $x$ and $z$ are in $\cone_a(\alpha,r)$,
if we estimate $\phi_h(x,z)$ like in
Proposition \ref{prop:borne-rentr}
by using a path following the segment $[x,z]$
with constant speed equal to $\sqrt{2h}$, we find
\[
\phi_h(x,z)\leq\sqrt{2h}\norm{z-x} +\frac{\mu}{\sqrt{2h}\alpha}
\frac{\norm{z-x}}{r},
\]
where here the constant $\mu$ is defined as
\[
\mu=\max\set{U(w)\mid\;w\in\cone_a(\alpha),\;\norm{w}=1}.
\]
Together with the lower bounds
\[
\phi_h(x,y)\geq \sqrt{2h}\norm{x-y}
\;\;\textrm{ and }\;\; 
\phi_h(y,z)\geq \sqrt{2h}\norm{y-z}
\]
this gives the inequality 
\begin{equation} \label{siegfried}
\calD(x,y,z)\geq
\sqrt{2h}\;\calE(x,y,z)-\frac{\mu}{\sqrt{2h}\;\alpha}
\frac{\|z-x\|}{r},
\end{equation}
where $\calE:(E^N)^3\to\R$ is the function defined by 
\[
\calE(p,s,q)=\norm{s-p}+\norm{q-s}-\norm{q-p},
\quad (p,s,q)\in (E^N)^3.
\]
We look now for a lower bound of $\calE(x,y,z)$.
By definition of $y$ and $z$ we know that
$y\in\partial\cone_a(\beta,r_1)$ and
$z\in\partial\cone_a(\alpha,r)$.
If $\lambda$ is the positive constant given by
Proposition \ref{prop:borne-rentr}, we know that
$\norm{y}\geq\lambda\norm{x}\geq\lambda\,r$,
therefore, choosing $r>\lambda^{-1}r_1$ we can assume
that $\norm{y}>r_1$, which in turns implies that
$y\in\partial\cone_a(\beta)$.

On the other hand
either $\norm{z}=r$, or
$z\in\partial\cone_a(\alpha)$ with $\norm{z}>r$.
In the first case we have
\[
\norm{z}=r\leq\norm{x}\leq\lambda^{-1}\norm{y}.
\]
Otherwise, if $z\in \partial\cone_a(\alpha)$ with $\norm{z}>r$,
by applying Proposition \ref{prop:bornes-config-rentr}
we find that $\norm{z}\leq c\,\norm{y}$, where $c>0$
only depends on $r_1>0$.
In both cases, if we set $e=\max(\lambda^{-1},c)$ we can say that
$\norm{z}\leq e\,\norm{y}$ and $\norm{x}\leq e\,\norm{y}$.

Let us consider the compact subset of $(E^N)^3$ defined as
\[
\calK=\{(p,s,q)
\in\cone_a(\alpha)\times
\partial\cone_a(\beta)\times
\cone_a(\alpha)\;\mid\;
\norm{s}=1,\;
\norm{p}\leq e,
\text{ and }
\norm{q}\leq e\}.
\]
Clearly $\norm{y}^{-1}(x,y,z)\in \calK$. Therefore,
since $\calE$ is continuous and homogeneous of
degree $1$, we have that
\[
\calE(x,y,z)\geq \nu \norm{y},\quad\text{ where }\quad
\nu=\min\set{\calE(p,s,q)\,\mid\,(p,s,q)\in\calK}.
\]
The function $\calE$ is always nonnegative,
with $\calE(p,s,q)=0$ if and only if $s$ lies
on the segment joining $p$ to $q$.
However if $(p,s,q)\in\calK$, since $\cone_a(\alpha)$ is convex and $\cone_a(\alpha)\cap\partial\cone_a(\beta)=\{0\}$, then
$s$ is never
on the segment joining $p$ to $q$, thus $\nu>0$,
and moreover this constant depends only of $e$,
which in turns depends only on $r_1$. 
By replacing in (\ref{siegfried}),
and using that $x$ and $z$ are both in the ball
$\overline B(e\norm{y})$, we find
\[
\calD(x,y,z)\geq\sqrt{2h}
\left(\nu- \frac{\mu e}{h\alpha r}\right)\norm{y},
\]
which is strictly positive if $r$ is chosen so that
\[
r>r_2=\frac{\mu e }{h\alpha\nu}.
\]
We conclude that for $r>r_2$, and any $x\in\cone_a(\alpha,r)$,
there is a unique geodesic ray $\gamma\in\calM_a$
such that $\gamma(0)=x$ as we wanted to prove.

\end{proof}

\appendix
\section{Regularity of hyperbolic Viscosity solutions}
\label{ViscoSol}

Here we will prove that given a hyperbolic viscosity solution
$u:E^N\to\R$, the set of configurations $x\in\Omega$ where $u$
is differentiable is exactly the set of those configurations
for which there is a unique calibrating curve
$\gamma:(-\infty,0]\to\Omega$ arriving at $x$ at time $t=0$.

The following lemma is essentially a particular case
of Proposition 4.11.1 in the well known
and unpublished book \cite{Fa}.
We only need to adapt the argument
in order to take into account collision singularities.  
\begin{lemma} \label{lem:sur-diff}
Given $a\in\Omega$, and $u\in\calS_a$ a hyperbolic
viscosity solution with limit shape $a$,
there exist $K>0$ and $r>0$ such that
$\overline{B}(x,\delta)\subset\Omega$ and
such that, for any configurations
$y$ and $z$ in $\overline{B}(x,r)$
and any curve
$\gamma: (-\infty,0]\rightarrow \Omega$ calibrating for $u$ and satisfying 
$\gamma(0)=y$, we have
\[
u(z)-u(y)\leq \inner{\dot{\gamma}(0)}{z-y} +K\norm{z-y}^2.
\]
In particular, if $u$ is differentiable at $y$ then $d_y u (v)=\inner{\dot{\gamma}(0)}{v}$ for all $v\in E^N$. 
\end{lemma}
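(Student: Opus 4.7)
The strategy is the standard Fathi-style local regularity argument: compare the calibration identity $u(y)-u(\gamma(-\tau)) = \calA_{L+h}(\gamma|_{[-\tau,0]})$ with the domination inequality $u(z) - u(\gamma(-\tau)) \leq \calA_{L+h}(\eta)$ for a suitable test curve $\eta:[-\tau,0]\to E^N$ with $\eta(-\tau)=\gamma(-\tau)$ and $\eta(0)=z$. Subtracting the two reduces the lemma to bounding $\calA_{L+h}(\eta)-\calA_{L+h}(\gamma|_{[-\tau,0]})$ from above. The natural (and essentially forced) choice is the affine perturbation $\eta(t)=\gamma(t)+\tfrac{t+\tau}{\tau}(z-y)$.

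First I would fix $r\in(0,\delta/3)$ and $\tau_0>0$ such that, for every $y\in\overline{B}(x,r)$ and every $u$-calibrating curve $\gamma$ with $\gamma(0)=y$, the trajectory $\gamma([-\tau_0,0])$ --- and also the perturbed curve $\eta([-\tau_0,0])$ whenever $\|z-y\|\leq r$ --- remains inside a fixed compact set $K\subset\overline{B}(x,\delta)\subset\Omega$. Because any calibrating curve is a genuine motion of energy $h$, the relation $\tfrac12\|\dot\gamma\|^2=h+U(\gamma)$ gives a uniform bound on $\|\dot\gamma\|$ once the curve is confined to a compact subset of $\Omega$; a standard continuity-bootstrap argument starting from $\gamma(0)=y\in\overline{B}(x,r)$ then keeps $\gamma$ inside $K$ on the whole interval $[-\tau_0,0]$. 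On $K$ both $U$ and $\nabla U$ are bounded, say $U$ is $L$-Lipschitz and $\|\nabla U\|\leq M$; these uniform constants are all the final estimate will depend on.

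Next I would expand the action difference. The kinetic contribution equals $\tfrac{1}{\tau}\langle\gamma(0)-\gamma(-\tau),\,z-y\rangle+\tfrac{1}{2\tau}\|z-y\|^2$; using $\ddot\gamma=\nabla U(\gamma)$ together with $\|\nabla U\|\leq M$ on $K$, a Taylor expansion at $t=0$ replaces the first summand by $\langle\dot\gamma(0),z-y\rangle$ up to an $O(M\tau\|z-y\|)$ error. The potential contribution $\int_{-\tau}^0(U(\eta)-U(\gamma))\,dt$ is bounded by $L\tau\|z-y\|$ via the Lipschitz estimate together with the trivial pointwise bound $\|\eta(t)-\gamma(t)\|\leq\|z-y\|$. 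Adding everything yields
\[
u(z)-u(y)\;\leq\;\langle\dot\gamma(0),z-y\rangle+\frac{\|z-y\|^2}{2\tau}+C\tau\|z-y\|
\]
with $C=M/2+L$, and optimizing in $\tau$ by choosing $\tau=\|z-y\|$ produces the quadratic bound with $K=\tfrac12+C$. A further reduction of $r$ ensures $\|z-y\|\leq\tau_0$, so that $\tau=\|z-y\|$ is admissible. The main obstacle is precisely this uniformity issue: the constant $K$ must be independent of $y$ and of the choice of calibrating curve through $y$, which is why the preliminary confinement step is essential --- without the compact set $K$, the quantities $M$ and $L$ could diverge as $\gamma(t)$ approached the collision set $\Delta$.

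The final assertion is a direct corollary: if $u$ is differentiable at $y$, applying the inequality with $z=y+sv$ and letting $s\to 0^+$ gives $d_yu(v)\leq\langle\dot\gamma(0),v\rangle$; applying it again with $-v$ in place of $v$ yields the reverse inequality, so $d_yu(v)=\langle\dot\gamma(0),v\rangle$ for every $v\in E^N$.
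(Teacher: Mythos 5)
Your setup (same affine test curve, comparison of calibration with domination, confinement of everything in a compact subset of $\Omega$ to get uniform bounds) is exactly the paper's, but the error bookkeeping at the end fails, and the quadratic bound does not come out of your estimates. With your decomposition the remainder is $\frac{\norm{z-y}^2}{2\tau}+C\tau\norm{z-y}$, and the choice $\tau=\norm{z-y}$ gives $\frac{\norm{z-y}^2}{2\tau}=\frac{1}{2}\norm{z-y}$, a term that is \emph{linear}, not quadratic, in $\norm{z-y}$ (the claimed $K=\frac12+C$ rests on this slip). Optimizing genuinely over $\tau$ gives $\tau\sim\norm{z-y}^{1/2}$ and a remainder of order $\norm{z-y}^{3/2}$; no choice of $\tau$ recovers $O(\norm{z-y}^2)$ from your two error terms. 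The obstruction is that you discarded a cancellation: the $O(M\tau\norm{z-y})$ error from Taylor-expanding the kinetic term and the $O(L\tau\norm{z-y})$ bound on the potential term are not independent errors --- their first-order parts cancel exactly.

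The paper's proof keeps $\tau=\epsilon$ \emph{fixed} (namely $\epsilon=r/A$, independent of $z$ and $y$) and extracts the exact first-order term of the potential difference, writing $U(\gamma_z(t))-U(\gamma(t))=\frac{t+\epsilon}{\epsilon}\bigl(\inner{\nabla U(\gamma(t))}{w}+\calU(t,z)\bigr)$ with $\abs{\calU(t,z)}\leq\frac{C}{2}\norm{w}^2$, where $C$ bounds $\norm{HU}$ on $\overline B(x,\delta)$. An integration by parts using Newton's equation $\ddot\gamma=\nabla U(\gamma)$ then shows that
\[
\int_{-\epsilon}^0\Bigl(\tfrac{1}{\epsilon}\inner{\dot\gamma(t)}{w}+\tfrac{t+\epsilon}{\epsilon}\inner{\nabla U(\gamma(t))}{w}\Bigr)\,dt=\inner{\dot\gamma(0)}{w}
\]
\emph{exactly}, with no $O(\tau\norm{w})$ remainder, leaving only the genuinely quadratic errors $\frac{\norm{w}^2}{2\epsilon}$ and $\frac{C\epsilon}{2}\norm{w}^2$, hence $K=\frac{C\epsilon^2+1}{2\epsilon}$. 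You should replace your two crude first-order bounds by this single identity. (As a mitigating remark: your $O(\norm{z-y}^{3/2})$ bound is still $o(\norm{z-y})$, so the ``in particular'' differentiability conclusion, and even the use made of the lemma in Proposition \ref{prop:diff-unic-calibr}, would survive; but it does not prove the lemma as stated.)
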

\begin{proof}
As usual let us set $h=\norm{a}^2/2$.
Since $\overline{B}(x,\delta)\subset \Omega$, by conservation of energy we know that there exist a constant $A>0$ such that for each solution $\sigma : I \rightarrow \overline{B}(x,\delta)$ of the $N$-body problem with energy $h$ it holds 
$\norm{\dot{\sigma}(t)}\leq A$ for all $t\in I$. Let $r\in (0,\delta)$ that we will precise later, and $\epsilon=r/A$. 
Given now $y$ and $z$ in $\overline{B}(x,r)$, if $\gamma : (-\infty,0]\rightarrow \Omega$ is a curve calibrating $u$ and arriving at $y$ at time $t=0$, let us define the  
deformation 
\[
\gamma_z : (-\infty,0]\rightarrow E^N, \quad \gamma_z(t)=\left\{
\begin{array}{ll}
\gamma(t) \quad &\text{if} \quad t\in (-\infty,-\epsilon] \\
\gamma(t)+\frac{t+\epsilon}{\epsilon}(z-y)  \quad &\text{if} \quad t\in [-\epsilon,0],
\end{array}
\right.
\]
thus $\gamma_y=\gamma$. 
By choice of $y$, $z$ and $\epsilon$ we know that $\gamma_z(t)\in \overline{B}(x,4r)$  for $t\in [-\epsilon,0]$, therefore 
we set $r=\delta/4$ and then we will have that $\gamma_z(t)\in \overline{B}(x,\delta)$ for all $t\in [-\epsilon,0]$. 
However $u$ is dominated by $L+h$, $\gamma$ is calibrating for $u$ and $\gamma_z(0)=z$ thus we have
\[
u(z)-u(y) \leq \int_{-\epsilon}^0 \left( L(\gamma_z(t),\dot{\gamma}_z(t))-L(\gamma(t),\dot{\gamma}(t))\right)\, dt.
\]
Using the fact that $\dot{\gamma}_z(t)=\dot{\gamma}(t)+\frac{z-y}{\epsilon}$, if we set $w=z-y$ we find 
\begin{equation} \label{eq:u-x-u-y}
u(z)-u(y)\leq \int_{-\epsilon}^0 \left(\frac{1}{\epsilon}\inner{\dot{\gamma}(t)}{w}+U(\gamma_z(t))-U(\gamma(t)) \right)\, dt+\frac{\norm{w}^2}{2\epsilon}.
\end{equation}
At the same time, if for $s\in [0,1]$ and $t\in [-\epsilon,0]$ we denote $\gamma_{s,z}(t)=\gamma(t)+s\frac{t+\epsilon}{\epsilon}w$, then  
\[
\begin{array}{rl}
U(\gamma_z(t))-U(\gamma(t))&=\left(\frac{t+\epsilon}{\epsilon}\right)\int_0^1 \inner{\nabla U(\gamma_{s,z}(t))}{w}\, ds \\
&=\left(\frac{t+\epsilon}{\epsilon}\right)\left(\inner{\nabla U(\gamma(t))}{w}+\calU(t,z)\right),
\end{array}
\]
where 
\[
\calU(t,z)=\int_0^1 \inner{\nabla U(\gamma_{s,z}(t))-\nabla U(\gamma(t))}{w}\, ds.
\]
Introducing the constant
\[
C=\max_{q\in \overline{B}(x,\delta)}  \|D^2U(q)\|
\]
we easily find that 
\[
|\calU(t,z)|\leq \frac{C}{2}\norm{w}^2,
\]
therefore, replacing in (\ref{eq:u-x-u-y}) gives
\[
\begin{array}{rcl}
u(z)-u(y)&\leq &\int_{-\epsilon}^0 \left(\frac{1}{\epsilon}  \inner{\dot{\gamma}(t)}{w}+\frac{t+\epsilon}{\epsilon}\inner{\nabla U(\gamma(t))}{w}    \right)\, dt \\
&+&\frac{C\epsilon^2+1}{2\epsilon}\norm{w}^2.
\end{array}
\]
Since $\gamma$ is a motion, by performing an integration by part at the second member we finally find 
\[
u(z)-u(y)\leq \inner{\dot{\gamma}(0)}{z-y}+K\norm{z-y}^2,
\]
for some strictly positive constant $K$ which depends only on $\delta$, $A$ and $C$. 

Suppose now that $u$ is differentiable at $y$. Given $v\in E^N$, by choosing $z=y+sv$ with $s>0$ small enough we find 
\[
\frac{u(y+sv)-u(y)}{s}\leq \inner{\dot{\gamma}(0)}{v}+Ks\norm{v}^2,
\]
thus $d_yu(v)\leq \inner{\dot{\gamma}(0)}{v}$. The same computation with $s<0$ small enough gives that
$d_yu(v)\geq \inner{\dot{\gamma}(0)}{v}$, thus $d_y u(v)=\inner{\dot{\gamma}(0)}{v}$.
\end{proof}
{\bf Remark.} This Lemma implies in particular that $u$ is superdifferentiable at each configuration without collisions, and if 
$\gamma : (-\infty,0]\rightarrow \Omega$ is a calibrating curves for $u$, then $\dot{\gamma}(0)$ is a supergradient of $u$ at $\gamma(0)$. 
\begin{proposition}
\label{prop:diff-unic-calibr}
  Let $u\in \calS_a$ and $x\in \Omega$. The function $u$ is
  differentiable at the point $x$ if and only if there exist a unique
  curve $\gamma : (-\infty,0]\rightarrow E^N$ that is calibrating for $u$ and such that $\gamma(0)=x$. 
 In this case we have $d_x u(v)=\inner{\dot{\gamma}(0)}{v}$ for all $v\in E^N$, or equivalently $\nabla u(x)=\dot{\gamma}(0)$. 
\end{proposition}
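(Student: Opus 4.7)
The proposition contains two implications. The $(\Rightarrow)$ direction is immediate from Lemma \ref{lem:sur-diff}: if $u$ is differentiable at $x$, then every calibrating curve $\gamma$ with $\gamma(0)=x$ must satisfy $d_xu(v)=\inner{\dot\gamma(0)}{v}$ for all $v\in E^N$, so $\dot\gamma(0)=\nabla u(x)$ is uniquely determined; since a calibrating curve is a genuine motion of the $N$-body problem in $\Omega$ near $t=0$, uniqueness of the Cauchy problem for Newton's equation then forces $\gamma$ itself to be unique.

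For $(\Leftarrow)$ the strategy is to sandwich $u(z)-u(x)$ between two linear functions in $z-x$ whose slopes tend to the same vector $\dot\gamma(0)$ as $z\to x$. Lemma \ref{lem:sur-diff} applied to the unique calibrating curve $\gamma$ at $x$ yields the upper bound
\[
u(z)-u(x)\leq \inner{\dot\gamma(0)}{z-x}+K\norm{z-x}^2
\]
for $z$ in a small neighborhood of $x$. For the lower bound, condition (ii) of Definition \ref{def.hyp.visc.sol} provides, for each such $z\in\Omega$, a calibrating curve $\gamma_z:(-\infty,0]\to\Omega$ with $\gamma_z(0)=z$; applying Lemma \ref{lem:sur-diff} at $z$ with the roles of $y$ and $z$ reversed and rearranging gives
\[
u(z)-u(x)\geq \inner{\dot\gamma_z(0)}{z-x}-K\norm{z-x}^2.
\]
Hence, if one can show $\dot\gamma_z(0)\to\dot\gamma(0)$ as $z\to x$, then differentiability at $x$ with gradient $\dot\gamma(0)$ follows by comparing the two inequalities.

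To prove this convergence, I would fix a compact neighborhood $\overline{B}(x,r)\subset\Omega$, so that energy conservation gives the uniform bound $\norm{\dot\gamma_z(0)}^2=2(h+U(z))\leq C$. For any sequence $z_n\to x$, extract a subsequence along which $\dot\gamma_{z_n}(0)\to p$ in $E^N$. Continuous dependence of the Newtonian flow on initial conditions implies that $\gamma_{z_n}$ converges, uniformly on each compact sub-interval of the maximal backward interval of existence $(-T^*,0]$ of the solution $\tilde\gamma$ starting at $(x,p)$, to $\tilde\gamma$. Passing to the limit in the calibrating equality $u(\gamma_{z_n}(\beta))-u(\gamma_{z_n}(\alpha))=\calA_{L+h}(\gamma_{z_n}|_{[\alpha,\beta]})$ and using continuity of $u$, one obtains that $\tilde\gamma$ calibrates $u$ on each such interval.

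The crux is to show $T^*=+\infty$. If $T^*<+\infty$, then $\tilde\gamma(t)$ would approach a collision configuration $x^*\in\Delta$ as $t\to -T^*$; by energy conservation $\tfrac{1}{2}\snorm{\dot{\tilde\gamma}}^2=h+U(\tilde\gamma)\to+\infty$, so the action $\calA_{L+h}(\tilde\gamma|_{[-T^*+\epsilon,0]})$ would diverge as $\epsilon\to 0^+$, contradicting the boundedness of $u(x)-u(\tilde\gamma(-T^*+\epsilon))$ that follows from $u\in C^0(E^N)$. Hence $\tilde\gamma:(-\infty,0]\to\Omega$ is a global calibrating curve at $x$, so the uniqueness hypothesis forces $\tilde\gamma=\gamma$ and $p=\dot\gamma(0)$. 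Since $\{\dot\gamma_z(0):z\in\overline{B}(x,r)\}$ is relatively compact and every convergent subsequence has limit $\dot\gamma(0)$, we conclude $\dot\gamma_z(0)\to\dot\gamma(0)$ as $z\to x$, completing the argument. The main obstacle is precisely this exclusion of finite-time backward collisions for the limit curve; the calibrating identity together with the continuity of $u$ on all of $E^N$ provides exactly the energy bound needed to rule them out.
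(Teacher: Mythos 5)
Your overall architecture is the same as the paper's: the forward implication via Lemma \ref{lem:sur-diff} plus uniqueness of the Cauchy problem, and the converse via a two-sided bound on $u(z)-u(x)$ from Lemma \ref{lem:sur-diff} applied at $x$ and at $z$, reduced to showing that initial velocities of calibrating curves at nearby points converge to $\dot\gamma(0)$, which you attack by extracting limits and showing the limit curve is a global calibrating curve at $x$. Up to and including the reduction to proving $T^*=+\infty$, this matches the paper. The gap is in how you exclude $T^*<+\infty$, and it is a genuine one.

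First, if the backward solution $\tilde\gamma$ has a singularity at time $-T^*$, it does not follow that $\tilde\gamma(t)$ approaches a collision configuration: the configuration could become unbounded in finite time (a non-collision singularity), and your argument says nothing about this case. The paper treats it separately, using the Cauchy--Schwarz bound $2|t_n|\,\calA \geq \norm{q_n-x}^2$ to contradict the H\"older-type upper estimate on $\phi_h$ from \cite{MaVe}. Second, and more seriously, even in the bounded case your claimed contradiction does not occur: for the Newtonian potential the Lagrangian action of a trajectory ending in a collision is \emph{finite} (near a binary collision $r\sim c\,|t-t_0|^{2/3}$, so both $U$ and $\snorm{\dot{\tilde\gamma}}^2$ behave like $|t-t_0|^{-2/3}$, which is integrable). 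This finiteness is exactly why Sundman-type estimates and Marchal's theorem exist as tools; collision arcs have finite action and could a priori calibrate. So $\calA_{L+h}(\tilde\gamma|_{[-T^*+\epsilon,0]})$ stays bounded as $\epsilon\to 0^+$ and there is no conflict with the continuity of $u$. The paper's actual route in this case is: invoke von Zeipel's theorem to get a limit collision configuration $x_c\in\Delta$, use Sundman's inequalities to extend the calibration identity up to $t=-T^*$, concatenate with a backward calibrating curve emanating from $x_c$ (which exists by \cite{MaVe}) to produce an $h$-minimizer passing through a collision at an interior time, and then contradict Marchal's theorem. Your proof needs to be repaired by replacing the "action diverges" step with an argument of this kind, and by adding the unbounded-in-finite-time case.
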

\begin{proof}
Suppose $u$ differentiable at $x$, and let $\gamma : (-\infty,0]\rightarrow \Omega$ be a calibrating curve for $u$ such that $\gamma(0)=x$.
By Lemma \ref{lem:sur-diff} we know that $\dot{\gamma}(0)=\nabla u(x)$, thus for all $t\leq 0$ we have that $\gamma(t)=\varphi^t(x,\nabla u(x))$. This show the uniqueness of $\gamma$. 

Conversely, let $x\in\Omega$ such that there is a unique  curve $\gamma : (-\infty,0]\rightarrow \Omega$ which calibrates $u$ and arrives at $x$ at $t=0$, and let us set $w=\dot{\gamma}(0)$. Consider now the multivalued function $\calV : \Omega \rightarrow \calP(E^N)$ defined in the following way : if $y\in\Omega$ then $\calV(y)$ is the set of all $p\in E^N$ such that $t\mapsto \varphi^t(y,p)$ is well defined for all $t\leq 0$ and is calibrating for $u$. We now prove the following claim.

{\it Claim.} $\lim_{y\to x} \calV(y)=\{w\}$, or equivalently, for each $\epsilon>0$ there is $\delta>0$ such that if $y\in \overline{B}(x,\delta)$ 
then $\calV(y)\subset \overline{B}(w,\epsilon)$. 

To prove this claim we will show that a limit of calibrating curve is still a calibrating curve for $u$. Let $\delta_0>0$ be such that 
$\overline{B}(x,\delta_0)\subset \Omega$. Since each calibrating curve has energy equal to $h=\norm{a}^2/2$, there is a constant $A>0$ such that for each $y\in \overline{B}(x,\delta_0)$ and $p\in \calV(y)$ we have that $\norm{p}\leq A$. Let us suppose, for the sake of contradiction, that there exist $\epsilon>0$ and a sequence $((y_n,p_n))_n$, in $\cup_{y\in \overline{B}(x,\delta_0)} \{y\}\times \calV(y)$ such that 
$\lim_{n\to +\infty} y_n=x$ and $\norm{p_n-w}>\epsilon$ for all $n\in\N$. Up to extracting a subsequence, we can assume that $(p_n)_n$ converge to 
some $p\in E^N$ satisfying $\norm{p-w}\geq \epsilon$. Let now $\sigma_n : (-\infty,0]\rightarrow \Omega$ be the calibrating curve defined by $\sigma_n(t)=\varphi^t (y_n,p_n)$, and let $(-T,0]$ be the maximal interval of existence of the motion $\sigma(t)=\varphi^t(x,p)$ for negative times. 
Since $(\sigma_n)_n$ converges to $\sigma$ in the $\calC^1$ topology on every segment contained in $(-T,0]$, we know that $\sigma$ is still a calibrating curve for $u$. 
In particular, by Remark \ref{rem:calibrat-implique-geod-min} we know that for each segment $[c,d]\subset (-T,0]$ 
it holds  
\[
\phi_h(\sigma(c),\sigma(d))=\calA_{L+h}(\sigma\left|_{[c,d]}\right.) .
\]
Let us show now that $T=+\infty$. It is essentially the same argument that is used in the proof of Theorem 3.2. in \cite{MaVe}. We write it here for the sake of clarity. Suppose by contradiction that $T<+\infty$. We consider two cases. If $\norm{\sigma(t)}$ is bounded as $t\to -T^+$, then by a classical result of von Zeipel \cite{Zeip} the limit $x_c=\lim_{t\to -T^+} \sigma(t)$ exists and is a configuration with collisions, thus $\sigma(t)$ has a collision singularity as $t\to -T^+$. In this case we extend $\sigma$ on $[-T,0]$ by setting $\sigma(-T)=x_c$. By Sundman's inequalities for total or partial collisions (see for instance \cite{Sper}), there exists a constant $\mu>0$ such that
\[
\norm{\dot{\sigma}(t)}\sim \frac{2\mu}{3} |t+T|^{-1/3}\quad \text{and} \quad U(\sigma(t))\sim \frac{2\mu^2}{9}|t+T|^{-2/3}\quad \text{as}\ t\to -T^+.
\]
Therefore the function $t \to \calA_{L+h}(\sigma\left|_{[t,0]}\right.)$ is continuous at  $t=-T$ and $\sigma$ is calibrating for $u$ in 
the full interval $[-T,0]$. By definition of hyperbolic viscosity solution we know that there is another calibrating curve 
$\eta : (-\infty,0]\to E^N$ for $u$ such that $\eta(0)=\sigma(-T)$, and by Lemma 2.15 in \cite{MaVe}, the concatenation of $\eta$ and $\sigma$, here denoted $\eta*\sigma$, is still a calibrating curve for $u$. By Marchal's Theorem  \cite{Mar} we know that $\eta*\sigma$ is without collision and a genuine solution of the $N$-body problem with energy $h$. 
This gives a contradiction, because 
$\eta(0)=\sigma(-T)=x_c$ is a configuration with collisions.  

On the other hand, if $\sigma(t)$ is unbounded as $t\to -T^+$, there exists a sequence $(t_n)_n$ in $(-T,0]$ converging to $-T$ and  such that if we set $q_n=\sigma(t_n)$ then we have $\norm{q_n-x}\to +\infty$ as $n\to +\infty$. Let us set $A_n=\calA(\sigma\left|_{[t_n,0]}\right.)$. By neglecting the potential term in the integral defining the action and by using Cauchy-Schwarz inequality we find $2|t_n| A_n \geq \norm{q_n-x}^2$, therefore
\[
\phi_h(q_n,x)=A_n+h|t_n|\geq\frac{\norm{q_n-x}^2}{2|t_n|}+h|t_n| 
\]
Since $(t_n)_n$ is bounded and $(q_n)_n$ is unbounded, for $n$ big enough this contradicts the inequality proved in \cite{MaVe}, Theorem 2.11. 

We have proved that $T=+\infty$, that is to say $\sigma$ is a global calibrating curve for $u$. Since $\norm{p-w}\geq \epsilon$, we know that $\sigma\neq \gamma$.  This contradicts the uniqueness of the calibrating curve arriving at $x$ and achieve the proof of the claim.    

Let now $\epsilon>0$. By the claim that we have just proved we can find $\delta>0$ such that $\overline{B}(x,\delta)\subset \Omega$ and for all $y\in \overline{B}(x,\delta)$ and $p\in \calV(y)$ it holds $\norm{p-w}<\epsilon$. By applying Lemma \ref{lem:sur-diff} we find two constants $K>0$ and $r\in (0,\delta)$ such that for any $y\in \overline{B}(x,r)$ and  $p\in\calV(y)$ it holds
\[
u(y)-u(x)\leq \inner{w}{y-x}+K\norm{y-x}^2\quad \text{and} \quad u(x)-u(y)\leq \inner{p}{x-y}+K\norm{y-x}^2,
\]
and therefore 
\[
-\epsilon\norm{y-x}-K\norm{y-x}^2   \leq u(y)-u(x)-\inner{w}{y-x}\leq K\norm{y-x}^2.
\]
This last inequality can be written as
\[
u(y)=u(x)+\inner{w}{y-x}+o(\norm{y-x}),\quad y\to x,
\]
thus $u$ is differentiable at $x$ and $\nabla u(x)=w=\dot{\gamma}(0)$.
\end{proof}

\section{More on hyperbolic motions}
\label{app:More-Hyp}

Hyperbolic motions are in the class of expansive motions,
that is, the class of motions in which all mutual distances diverge.
For the general $N$-body problem,
a classification of the possible final evolutions for
expansive motions was largely described a century ago
in the work of J.~Chazy \cite{Cha1,Cha2}. 
A more complete and refined description
was obtained fifty years later, in particular
in the contributions of H.~Pollard \cite{Pol}, and
Ch.~Marchal \& D.~Saari \cite{MarSaa}.

It turns out that, in any expansive motion,
the energy constant must be $h\geq 0$
and each distance function
$r_{ij}(t)$ must diverge like $t$ or like $t^{2/3}$ as $t\to +\infty$.
If the motion is expansive but not hyperbolic, then either $h>0$
and some distances expand like $t^{2/3}$,
or $h=0$ and all mutual distances grow like $t^{2/3}$.
In the first case we say that the motion is
\emph{partially hyperbolic},
and as well as in the hyperbolic case, it admits
a limit shape $a=\lim_{t\to +\infty}\,(x(t)/t)$, but
unlike the hyperbolic case, the limit $a$ has collisions,
because $a_i=a_j$ for any pair $i<j$ such that
$r_{ij}(t)\approx t^{2/3}$.

In the second case, in which $h=0$, the expansivity
is also characterized by the property
$\lim_{t\to +\infty}\dot x(t)=0$.
Such motions are called \emph{parabolic},
and in general it is not possible to guarantee
the existence of a limit shape
$c= \lim_{t\to +\infty}\,\left[x(t)/t^{2/3}\right]$.
In case of existence, the limit shape of a parabolic motion
must necessarily be a central configuration.
Actually, it is well known that for a parabolic motion,
the quotient $x(t)/t^{2/3}$ approximates the set of central configurations,
and that the limit must exist in the case of approximating
non-degenerate central configurations.

The importance of hyperbolic motions for the study of the dynamics
of the $N$-body problem lies, at least, in the following
two reasons.

The first one is that, as we have shown in Lemma \ref{lem:lema-cont.limitshape}, the set of initial conditions
giving rise to hyperbolic motions is open
in the phase space. Moreover
the limit shape, as a function
defined on this open set, is smooth and invariant by the flow.
This says that hyperbolic motions constitute both the simplest
and strongest form of diffusion.
The subject has been extensively studied,
as well in the classical $N$-body problem,
as in many other physical models of particle interactions,
see for instance the well known paper by Simon on
classical particle scattering \cite{Sim}.
Even if the literature on the subject is extremely vast,
many natural issues seem to be far from being well understood,
and fundamental aspects are still being developed.
A clear example of this is the recent work by
Féjoz, Knauf and Montgomery \cite{FeKM}
in which they establish, for very general potentials,
a conjugation in the free part of the phase space,
between the considered dynamics
and the dynamics of free particles.

The second reason we want to mention is that they are abundant,
in the sense that they can be found for every initial configuration,
even for any prescribed limit shape and energy constant.
This is in fact the main result in our previous work \cite{MaVe}.
Moreover, the subset of the phase space giving rise to
bi-hyperbolic motions, that is to say,
entire motions which are hyperbolic
both for $t\to +\infty$ and for $t\to -\infty$, is also a non-empty
open set. This allows to define a scattering relation between
shapes, namely, two configurations without collisions are related
when they are, respectively, the past and the future limit shape
for some bi-hyperbolic motion.
This question has been studied by Duignan, Moeckel, Montgomery
and Yu \cite{DMMY}, who proved that,
given any configuration for the scattering shape in the past,
the set of related scattering shapes for the future
always contains an open subset. For this the authors perform
a partial compactification of the phase space by adding an
invariant manifold at infinity,
in such a way that the hyperbolic orbits are embedded
in the stable manifolds of equilibrium points in that manifold.
Nevertheless,
even for the planar three body problem it is not known if any pair
of configurations are scattering related
or if some pairs are excluded.
Moreover, it is unknown if in the three body problem
the set of related pairs is dense
like in the Kepler problem.

\vspace{.5cm}
\textbf{Acknowledgments.}
We want to thank Albert Fathi and Maxime Zavidovique
for several interesting discussions
that allowed this text to be greatly improved. The authors thank the anonymous referee for his careful reading 
and for his helpful comments that improved the quality of the manuscript. 

\vspace{.5cm}
\textbf{Statements and Declarations.}
The authors are not aware of any conflict of interest.
This research has no associated data.


\end{document}